\numberwithin{equation}{section}
\def\TT{{\mathbb{T}}}
 \def\HH{{\mathbb{H}}}
\def\ZZ{{\mathbb{Z}}}  \def\SS{{\mathbb{S}}}   \def\CC{{\mathbb{C}}}
\def\RR{{\mathbb{R}}}  \def\TT{{\mathbb{T}}}
\def\cG{{\mathcal{G}}}
\def\cI{{\mathcal{I}}}
\def\cB{{\mathcal{B}}}
\def\cJ{{\mathcal{J}}}
\def\scrE{{\mathscr{E}}}
\def\scrT{{\mathscr{T}}}
\def\scrW{{\mathscr{W}}}
\def\scrF{{\mathscr{F}}}
\def\scrK{{\mathscr{K}}}
\def\scrR{{\mathscr{R}}}
\def\scrD{{\mathscr{D}}}
\def\scrS{{\mathscr{S}}}
\def\cL{{\mathcal{L}}}
\def\scrM{{\mathscr{M}}}
\def\scrU{{\mathscr{U}}}
\def\scrV{{\mathscr{V}}}
\def\scrL{{\mathscr{L}}}
\def\cK{{\mathcal{K}}}
\def\cV{{\mathcal{V}}}
\def\cU{{\mathcal{U}}}
\def\vecV{\overrightarrow V}
\def\Map{\mathrm{Map}}
\renewcommand{\epsilon}{\varepsilon}
\newcommand{\ip}[1]{\langle #1 \rangle}
\newcommand{\ipp}[1]{\langle \!  \langle #1 \rangle\! \rangle}
\newcommand{\LieTT}{\mathfrak{t}}
\newcommand{\vol}{\mathrm{vol}}
\newcommand{\id}{\mathrm{id}}
\newcommand{\Diff}{\mathrm{Diff}}
\newcommand{\Symp}{\mathrm{Symp}}
\newcommand{\End}{\mathrm{End}}
\newcommand{\del}{\partial}
\renewcommand{\Im}{\mathrm{Im}\;}
\newcommand{\mub}{{\bm{\mu}}}
\newcommand{\resp}{\emph{resp.}{} }
\newtheorem{theointro}{Theorem}
\newtheorem{conj}[theointro]{Conjecture}
\newtheorem{lemma}[subsubsection]{Lemma}
\newtheorem{prop}[subsubsection]{Proposition}
\newtheorem{cor}[subsubsection]{Corollary}
\newtheorem{theo}[subsubsection]{Theorem}
\newtheorem{dfn}[subsubsection]{Definition}
\newtheorem{questions}[subsubsection]{Questions}
\theoremstyle{definition}
\newtheorem{rmk}[subsubsection]{Remark}
\newtheorem*{rmk*}{Remark}
\newtheorem*{rmks*}{Remarks}
\newtheorem{rmks}[subsubsection]{Remarks}
\subjclass[2010]{Primary 53D05, 53D20; Secondary 53D30,
57K17}
\keywords{hyperKähler, moduli space, gauge group, moment map geometry, moment
map flow, Duistermaat theorem,
polyhedral geometry, piecewise linear geometry, symplectic diffeomorphism, 
Morse-Bott theory, solitons}
\thanks{Supported by  IRL CRM-CNRS 3457, UQAM, CIRGET, UMR6629,
ANR-21-CE40-0017, CHL 11-LABX-0020-01}
\title[Symplectic maps and HK geometry]
{Symplectic maps and hyperKähler moment map geometry}
\author{Yann Rollin}
\address{Yann Rollin, Laboratoire Jean Leray, Universit\'e de Nantes}
\email{yann.rollin@univ-nantes.fr}
\begin{document}
\begin{abstract}
We obtain a correspondence between the group of 
	symplectic diffeomorphisms of a 
	$4$-dimensional real torus and the vanishing locus
	of a certain hyperKähler moment map. This observation gives rise to
	a new flow, called the modified moment map flow. The construction
	can be   adapted to the polyhedral setting, for which we  prove a
	Duistermaat type theorem. 
	This paper lays out the ground work for some effective 
	polyhedral symplectic
	geometry and  for a potential Morse-Bott theory, with applications
	to the topology of the space of symplectic maps of the $4$-torus.
\end{abstract}

{\Huge \sc \bf\maketitle}

\section{Introduction}
\label{sec:intro}
\subsection{Motivations}
Symplectic geometry is the natural mathematical framework for
  \emph{Hamiltonian mechanics}. The group of symplectic
  diffeomorphisms is a cornerstone of the theory, but its topology remains
  \emph{unknow}, to a large extent~\cite[\S$10.4$]{MDS}. 
  The interest for the topology of these
  groups
  has stirred dazzling developments in the
  field of symplectic topology.
  In particular, the
\emph{Arnold  
conjecture},  about 
the number of \emph{fixed points} of a  symplectic diffeomorphism, can
be understood as a natural extension of the
\emph{Poincaré-Birkhoff fixed point theorem}. 
Beautiful techniques were introduced   in the field of
symplectic topology for solving the Arnold conjecture, 
intimately related to the existence  of 
\emph{pseudoholomorphic curves} and  \emph{Floer homology}.
More generally, Gromov showed that certain symplectic properties
are \emph{flexible},
thanks some \emph{h-principle} and \emph{convex integration}
methods~\cite{Gro85}, whereas others are \emph{rigid}, due to
the existence of pseudoholomorphic curves~\cite{Gro86}. The \emph{Hofer
geometry} of the symplectic diffeomorphism group  was subsequently developped
and gave new insights about such rigidity properties~\cite{LMD}.
 Although the local structure of the group of symplectic diffeomorphism is completely
 described as an infinite dimensional Lie group~\cite{Ban},  
 its  large scale topology  remains deeply mysterious,
 in spite of all the efforts of the community, 
   except for some special cases
(cf.~\cite{MDA}, for instance). In particular, nothing 
more than the local structure is known about the topology of
the symplectic diffeomorphism group of a
\emph{$4$-dimensional  torus}.

One of the famous result about the  symplectic diffeomorphism groups 
is due to Eliasberg and Gromov~\cite{Gro86,Eli87}, who proved a striking theorem: 
although it is possible to approximate any smooth diffeomorphism by
 \emph{volume preserving diffeomorphisms}, in the $C^0$-sense,
it is not possible to do so with  symplectic diffeomorphisms.
More precisely, the Eliashberg-Gromov theorem states that every 
smooth diffeomorphism, which is a $C^0$-limit of a sequence of symplectic
diffeomorphism, must be symplectic  as well. 
This rigidity result is a strong incentive to relax the regularity
of \emph{symplectic maps} and gave rise to the field of \emph{$C^0$-symplectic 
geometry} which is, in some sense, an exploration of the 
closure of the group of symplectic diffeomorphisms. 

Our paper  deals with the case of a
$4$-dimensional real torus $M$, endowed with a
\emph{canonical  symplectic form} $\omega_M$ and a conjugate
\emph{hyperKähler structure}.
We introduce a new flow, called the \emph{modified moment map flow},
associated to a \emph{hyperKähler moment map geometry} on
a \emph{moduli space} of differential forms on $M$ and a triply \emph{Hamiltonian gauge group
action} on the moduli space.
The \emph{fixed point locus} of the modified moment map flow is closely
related to the group $\Symp(M,\omega_M)$ of symplectic 
diffeomorphisms of $M$. The flow seems to be an interesting
tool to investigate the topology of  $\Symp(M,\omega_M)$, 
in the spirit of  \emph{Morse-Bott theory}.
Some essential properties of the modified moment map  flow are proved in
this paper. They are, in
some sense,  the first
steps toward  an infinite dimensional Morse-Bott theory, that would lead 
to interesting topological invariants of  the  group  of symplectic
diffeomorphisms.

We show that the hyperKähler moment map construction  can be
readily adapted to
 \emph{polyhedral geometry}. We consider the space of
polyhedral maps of the torus~$M$, with respect to a prescribed
triangulation~$\scrT$.
The space of polyhedral symplectic maps $\Symp(M,\omega_M,\scrT)$
can be regarded as a finite
dimensional approximation of the space $\Symp^{PL}(M,\omega_M)$ 
of piecewise linear symplectic maps
of the torus (cf. \S\ref{rmk:dl} for more details).
We show that all our constructions, in the smooth setting,
have analogues in the polyhedral
setting and we define a \emph{polyhedral modified moment map
flow}. This flow is an \emph{ordinary differential equation} and much stronger
results can be proved. In the spirit of the Duistermaat theorem, we
construct a continuous retraction thanks
to the polyhedral modified moment map flow at Theorem~\ref{theo:duistermaat} . 
This
result  is
a strong incentive to carry out  numerical computer
experiments and produce \emph{effective
examples} of polyhedral symplectic maps of the $4$-torus~\cite{JR2}.

Very little is known about piecewise linear symplectic geometry,
although some seminal works have been carried out by Gratza \cite{Gra},
Bertelson-Distexe \cite{Dis19,BD},
Jauberteau-Rollin-Tapie \cite{JRT,R} and Etourneau~\cite{Eto}.
In each case, the
authors try to extend some 
elementary properties  of smooth symplectic geometry to 
the piecewise linear sympletic category. 
The task turns out to be unexepectedly
hard, which could be a manifestation of symplectic rigidity.
For
example~\cite{R} shows that every smoothly immersed torus in $\RR^4$ can
be approximated, 
in the $C^0$ sense, 
by  piecewise linear immersed Lagrangian tori. 
This
result can be understood as
an extension of the  
Gromov-Lees Lagrangian flexibility theorem~\cite{Gro86,Lee76}.

A tentative definition for piecewise linear symplectic manifold can be
found in \cite{BD}, but most of the basic
properies of smooth symplectic geometry are lost
in the piecewise linear case. Indeed, the classical \emph{local Darboux theorem} 
is a conjecture for piecewise linear symplectic manifolds.
The fact that the group smooth symplectic diffeomorphism 
is \emph{locally arc connected} is classical~\cite{Ban,MDS}. 
However the same statement  is a conjecture for
the space of piecewise linear symplectic maps.
 There is even worse: the only 
 constructions of polyhedral symplectic maps of the
 $4$-dimensional quotient torus  $M$ are the trivial examples of affine maps of $M$. 
 It is also possible
 to consider 
 some product constructions induced by area preserving polyhedral maps of the
 $2$-torus, but no general construction or deformation theory is available, like
 in the smooth setting.

A natural continuation of this work   
is to develop a Morse-Bott
 theory following \cite{Kir},
 associated to the polyhedral modified moment map flow
and
designed to obtain topological 
invariants for the space of polyhedral
symplectic maps of the $4$-torus 
$\Symp(M,\omega_M,\scrT)$ and, ultimately, 
for the space $\Symp^{PL}(M,\omega_M)$ of piecewise linear
symplectic maps.

\subsection{Notations}
A hyperKähler torus $M$ of real dimension $4$ is a quotient $V/\Gamma$
where
\begin{itemize}
	\item $V$ is an
affine space of real dimension~$4$.
		\item $\Gamma$ is a lattice of the underlying vector space~$\vecV$.
		\item The
vector space $\vecV$ is identified via a given linear isomorphism with the space of quaternions $\HH$.
\end{itemize}
The \emph{quaternionic multiplication} by complex numbers \emph{on the
right} endows $\HH$, hence $\vecV$, with a structure of complex vector space.
In addition, the vector space $\vecV$ carries a Euclidean inner product
$g_V$, deduced from the canonical inner product on $\HH$ and the linear isomorphism.
Hence the affine space
$V$ is  endowed with a flat Riemannian metric $g_V$ and a
compatible integrable almost complex
structure denoted $i$.
In conclusion, we have a flat Kähler structure on the affine
space
$(V,g_V,i,\omega_V)$, where $\omega_V$ is the Kähler form. The  Kähler structure
on $V$ descends to the quotient $M$ as a flat Kähler structure
$$
(M,g_M,i,\omega_M)
$$
called the \emph{canonical Kähler structure}.
We are now interested in the group of symplectic diffeomorphisms of the
torus denoted
$$
\Symp(M,\omega_M).
$$
There are many other $g_M$-compatible complex structures
on~$M$: the quaternionic multiplication by $i, j$ and $k$, \emph{on
the left} induce three $g_M$-compatible integrable almost complex structures $I$, $J$ and
$K$ on $M$, which define a \emph{conjugate hyperKähler structure} 
with Kähler forms
$\hat\omega_I$, $\hat\omega_J$ and $\hat\omega_K$. 
By definition the hyperKähler forms $\hat\omega_\bullet$  are compatible with the reverse
orientation of $\omega_M$.

\subsection{Statement of results in the smooth setting}
\label{sec:statsmooth}
The gauge group 
$$\TT=C^\infty(M,S^1)$$
acts by complex multiplication
on the moduli space 
$$\scrF=\Omega^1(M,\vecV)$$
of $\vecV$-valued differential $1$-forms
on~$M$. We show that the moduli space $\scrF$~carries a natural formal 
 hyperKähler structure, 
together with a triply Hamiltonian action 
of the gauge group~$\TT$.

\begin{theointro}
	\label{theo:A}
	We consider a  hyperKähler quotient torus $M=V/\Gamma$
	of real dimension $4$, endowed with its canonical symplectic form $\omega_M$ 
	and conjugate hyperKähler structure, given by the almost complex structures $I,
	J$ and $K$, with
	 associated Kähler forms $\hat\omega_\bullet$ for $\bullet
	=I,J,K$.
	The moduli space $\scrF=\Omega^1(M,\vecV)$  of $\vecV$-valued differential $1$-forms 
	on $M$, carries a canonical  hyperKähler structure
 $$
 (\scrF,\cG,\cI,\cJ,\cK),
 $$
	where $\cG$ is a Euclidean inner product compatible with the almost complex
	structures $\cI$, $\cJ$ and $\cK$ on $\scrF$. 
	The associated
	Kähler forms are denoted repectively $\Omega_I$, $\Omega_J$ and
	$\Omega_K$.

The moduli space $\scrF$ is endowed with a canonical action of the gauge group $\TT=C^\infty(M,S^1)$, 
	 by complex
	multiplication. 
	The hyperKähler structure of $\scrF$ is invariant under the
	$\TT$-action. Furthermore, the gauge group action is Hamiltonian with respect to $\Omega_I, \Omega_J$
	and $\Omega_K$ and the  corresponding moment maps 
$$
	 \mu_I,\mu_J,\mu_K:\scrF\to \LieTT\simeq C^\infty(M,\RR),
$$
	  are given by the formulas
	 $$
	 \mu_\bullet(F)=-\frac{(F^*\omega_V)\wedge\hat\omega_\bullet}{\vol_M},
	 $$
	 for $\bullet = I, J, K$, where $\vol_M = \frac{\omega_M^2}2$ and
	 $$(F^*\omega_V)(\eta_1,\eta_2)=\omega_V(F(\eta_1),F(\eta_2)).$$
 \end{theointro}
It is convenient to gather the various moment maps $\mu_\bullet$  into a
single  map
\begin{equation}
	\label{eq:hkmm}
	\mub:\scrF \to \LieTT^3\\
\end{equation}
given by $ \mub=(\mu_I,\mu_J,\mu_K)$,
called the \emph{hyperKähler moment map}. 

A smooth map $f:M\to M$ defines a \emph{tangent map}  $f_*:TM\to TM$.
Using the triviality of the tangent bundle of a quotient torus, the tangent map $f_*$ can be understood as a $\vecV$-valued
differential form, denoted
$\scrD f\in \scrF$
and closely related to the differential operator $d$ in local coordinates
(cf.~\S\ref{sec:diffdiff} for a more detailed definition).
Thus $\scrD$ defines a linear differential operator  between {the moduli spaces}
$$
\scrD:\scrM\to\scrF,
$$
where 
$$
\scrM = \{f:M\to M, f\in C^\infty\}
$$
is the  space of smooth maps of
$M$. 
Our next theorem provides an interpretations of symplectic diffeomorphisms
of $M$ as zeroes of the moment map~$\mub$:
\begin{theointro}
	\label{theo:eqmu}
	Let $M$ be a hyperKähler quotient torus of real dimension $4$, with
	canonical symplectic form $\omega_M$. Let $f:M\to M$ be a smooth
	map of the torus satisfying the cohomological condition
	$f^*[\omega_M]=[\omega_M]$. Then the following properties are
	equivalent:
	\begin{enumerate}
		\item $f^*\omega_M=\omega_M$.
		\item $\mub(\scrD f) = 0$, where $\mub$ is the hyperKähler moment
			map.
	\end{enumerate}
\end{theointro}

Considering the
\emph{downward gradient flow} of the
functional
\begin{align*}
	\phi:\scrF &\to\RR\\
	F&\mapsto \frac 12 \|\mub(F)\|_{L^2}^2
\end{align*}
seems most natural when contemplating
Theorem~\ref{theo:A} and Theorem~\ref{theo:eqmu}. 
The functional $\phi$ is to be
treated  as some type of Morse-Bott function on the moduli space $\scrF$ 
and the corresponding downward gradient flow is
designed to obtain some topological information about the critical locus
of $\phi$.

The original groundbreaking idea, 
that the norm squared $\phi$ of a moment map should behave like a
Morse-Bott function, is due to Atiyah-Bott. 
They applied an  equivariant Morse theory 
to the Yang-Mills functional
and calculated the Betti numbers of the moduli space of semistable bundles
over a Riemann surface \cite{AB}.
Then, Donaldson relied on the
Yang-Mills flow 
to give a purely gauge theoretic proof 
of the Narasimhan-Seshadri theorem~\cite{D}. 

In the finite dimensional Kähler setting, Kirwan showed that
the function $\phi$ can be
treated as a Morse-Bott function~\cite{Kir} and induces 
corresponding Morse-Bott  inequalities~\cite[\S
6]{KFM}. The crucial property, that the moment map flow provides 
retractions, is attributed
to an observation of Duistermaat in~\cite[footnote, p.167]{KFM}, but a nice
self-contained proof can be found in \cite{L05}.

However, the standard  moment map flow on $\scrF$  is not
interesting in our case, as far as we are concerned with the
topology of $\Symp(M,\omega_M)$. Indeed, the action of the
gauge group $\TT$ does not preserve the subspace $\Im\scrD$
 and neither does the gradient flow of~$\phi$. Thus,
such a flow does not capture the topology of 
$\scrM$.
We get around
this issue by introducing a 
variant of the moment map flow: given a cohomology
class $\alpha\in H^1(M,\vecV)\setminus 0$, the subspace 
of \emph{closed forms}  with cohomology
class contained in  the line $\RR\alpha$ is denoted 
$$
 \scrF_{\alpha}=\{F\in\scrF, dF=0 \mbox{ and } [F] \in\RR \alpha\}.
$$
We consider the downward gradient flow of the restricted functional
$\phi:\scrF_\alpha\to\RR$,  called the \emph{modified moment map flow} and
we
prove the following basic properties:
\begin{theointro}
	\label{theo:C}
The fixed point locus of the modified moment map flow on $\scrF_\alpha$, in
	other words the critical locus of $\phi:\scrF_\alpha\to\RR$, agrees
	with the vanishing locus of $\phi:\scrF_\alpha\to\RR$.
	Furthermore, 
	\begin{enumerate}
		\item the modified moment map flow has the short time existence property and
		\item the $L^2$-norm is non increasing along the flow.
	\end{enumerate}
\end{theointro}
\begin{rmk}
	The  \emph{short time existence property} hides some
	technical aspects of the problem. The exact statement involves a
	completed version the
	moduli space  $\scrF$ with respect to some Hölder norm. The
	technical version of this result is stated at Theorem~\ref{theo:cauchy}.
\end{rmk}
\begin{rmk}
The restriction of the functional $\phi$ to the vector space $\scrF_\alpha$ may seem
	somewhat arbitrary. At first glance, it makes sense
	to restrict $\phi$ to other subspaces. For example, the
	restriction of $\phi$ to the cohomology class~$\alpha$, understood as an
	affine subspace of $\scrF$, could be considered as well. 
	It turns out that the $L^2$-decay property of Theorem~\ref{theo:C}
	does not hold in this case. 
 In fact, the decay property extends to the polyhedral setting introduced at
	\S\ref{sec:polmap}. This feature of the polyhedral moment map flow  is a crucial
	ingredient for the construction of a retraction
	proved at Theorem~\ref{theo:duistermaat}, which  motivates our point of view.
\end{rmk}

	We say that a cohomology class $\alpha \in H^1(M,\vecV)$ is
\emph{symplectic}, if there exists a smooth map $f:M\to M$ such that
	$f^*[\omega_M]= [\omega_M]$ and $\alpha =[\scrD f]$.
The component $\scrM_\alpha$
of $\scrM$ is defined as  the subspace of maps~$f$ with cohomology class $[\scrD f]=\alpha$. Similarly, we denote
$\Symp_\alpha(M,\omega_M)$ the subspace of $\Symp(M,\omega_M)$ 
contained in $\scrM_\alpha$. 
By Proposition~\ref{prop:inj}, the map 
$$
\scrD:\Symp_\alpha(M,\omega_M)\to \scrF_\alpha
$$
is injective up to the action of $\vecV$ by translations. 
Furthermore,  the cone on
its image agrees with the vanishing locus of $\phi$ in $\scrF_\alpha\setminus 0$ by
Corollary~\ref{cor:nonproper}.
In an ideal situation, we would expect some Duistermaat type theorem, where
the modified moment map flow produces a
continuous retraction by deformation of $\scrF_\alpha$ onto the vanishing
locus of $\phi$.
This would be an important step to investigate the topology of
$\Symp_\alpha(M,\omega_M)$.
 However, we are facing several
issues, raised in the next questions:
\begin{questions}
	\label{question:open}
	\begin{enumerate}
	\item Does the modified moment map flow enjoy a long time existence
		property?\label{q:i1}
	\item Are the flow lines convergent?\label{q:i2}
	\item Are there some non trivial  flow lines  with
		limit $0\in\scrF_\alpha$?\label{q:i3}
	\end{enumerate}
\end{questions}
 In fact we have little
prospect to answer questions  \eqref{q:i1} and
\eqref{q:i2}, since the modified moment map flow does not seem to have any nice regularizing
properties, like parabolic flows for instance. Nevertheless,  the answer to these questions 
is settled in the context
of polyhedral geometry, where we obtain the best possible result 
at Theorem~\ref{theo:duistermaat}. 

Question~\eqref{q:i3} is more conceptual. Indeed
$F=0$ is the differential of a constant map of $M$, which is definitely
not symplectic.  
This singular phenomenon,  if it ever occurs, is an obstruction for
pushing homotopies of $\scrM_\alpha$ into
$\Symp_\alpha(M,\omega_M)$  via the modified moment map,
as explained in
details at \S\ref{sec:motivret}.
This motivates the
introduction at \S\ref{sec:renflow} of some 
classical tools of \emph{blowup analysis}, devoted to 
study the flow lines converging toward $F=0$.  In particular
we define the \emph{real blowup} of
the moduli space, 
the \emph{soliton equation} and 
the \emph{renormalized flow} along the unit sphere
$\SS_\alpha\subset \scrF_\alpha$. 
We show that the renormalized flow, 
which is the downward gradient flow of the restricted
functional $\phi:\SS_\alpha\to\RR$, 
can be interpreted as a blow up of
the modified moment map flow at $F=0$.
The space of solitons of $\scrF_\alpha$ is a cone, which contains the vanishing set of
$\phi$, called the space of \emph{non proper solitons}. The remaining 
solitons are called  the \emph{proper solitons} and it turns out that each proper soliton 
defines a flowline of the modified moment map
flow converging toward $F=0$, as proved by Lemma~\ref{lemma:dicho}. 
The  space of solitons of $\scrF_\alpha$  admits a
partition
$$
\{0\}\sqcup\scrS_{\alpha,p}\sqcup\scrS_{\alpha,np}
$$
where 
$\scrS_{\alpha,p}$ is the space of proper solitons and 
 $\scrS_{\alpha,np}$ is the space of non proper solitons in
 $\scrF_\alpha\setminus 0$. 
By Corollary~\ref{cor:nonproper},  
$\scrS_{\alpha,np}$  is spanned by  the image of
$\Symp_\alpha(M,\omega_M)$ by $\scrD$ and the action of $\RR^*$. 
From these formal considerations, it seems most natural to regard
$\phi:\SS_\alpha\to\RR$ as a Morse-Bott function. An adapted Morse-Bott
theory would provide a dynamical interaction between the  critical
components of $\phi$, in particular between the space of proper and non proper
solitons  of $\SS_\alpha$. It seems sensible to expect that
topological invariants could be associated to the group of symplectic
diffeomorphisms 
via  a  Morse-Bott cohomological construction for $\phi$.
In this paper,  we show that the renormalized flow has  the following basic  properties:
\begin{theointro}
	\label{theo:D}
	The renormalized flow along $\SS_\alpha$ has the short time existence
	property. The fixed points of the renormalized flow are the
	solitons contained in $\SS_\alpha$ and the minimum of
	$\phi:\SS_\alpha\to\RR$ is the
	critical locus that consists of non proper solitons in the sphere.
\end{theointro}
However, many essential  questions are opened at this stage:
\begin{questions}
	\label{q:proper}
Is the space of proper solitons of $\scrF_\alpha$  empty? Are there some
	proper solitons with vanishing cohomology class? 
Do the components of the space of solitons have a structure of manifold, in
some sense? 
\end{questions}
The first question is directly related to the existence of solutions of
the modified moment map flow converging toward zero. We show at  Theorem~\ref{theo:polflow} that
the answer to this question is always positive in the case
of polyhedral
geometry.

We  also answer partially the last question in the smooth setting, 
by proving at Theorem~\ref{theo:connectedsol} that $\scrS_{\alpha,p}$
and $\scrS_{\alpha,np}$ are \emph{topologically separated}, with respect to a suitable Hölder
topology. 
In this introduction, we state a non technical version of
this result, as some type of \emph{rigidity theorem}:  trying to
 deform a non proper soliton into a proper soliton may seem like a natural
 idea,
  but
 we prove that it is not possible.
\begin{theointro}
	\label{theo:connected}
	Let $\alpha$ be a symplectic cohomology class. For every map
	 $\psi:X\to
	\scrF_\alpha$ from a topological
	connected space $X$,
	continuous with respect to the Whitney topology, such that
	$$
	\psi(X)\subset
			\scrS_{\alpha,p}\cup\scrS_{\alpha,np} \quad \mbox{
				and } \quad
			\psi(X)\cap \scrS_{\alpha,np}\neq\emptyset,
			$$
			we have 
			$$\psi(X)\subset \scrS_{\alpha,np}.$$
\end{theointro}

\subsection{Statement of results in the polyhedral setting}
\label{sec:respol}
All the constructions and results presented in the smooth setting at
\S\ref{sec:statsmooth}  were initially motivated by
the \emph{polyhedral symplectic framework}, with the \emph{piecewise linear symplectic
geometry} in mind. 
Working in the smooth setting helped us unveil the relevant objects 
in the \emph{discrete} and \emph{finite dimensional} world of polyhedral
geometry.
We  quickly introduce the polyhedral concepts necessary for stating our
results in the following
paragraphs, but the reader may refer to  \S\ref{sec:polmap} for more details.

A \emph{Euclidean triangulation}  of the quotient torus $M$ is a
triangulation
$\scrT=(M,\scrK,\ell)$, in the usual sense, with the additional condition
that the restriction of the homeomorphism $\ell:|\scrK|\to M$ is affine along each simplex of
the simplicial complex~$\scrK$.
The \emph{space of
polyhedral maps} with respect to $\scrT$, denoted $\scrM(\scrT)$, is the
space of continuous maps, affine along each simplex to the triangulation.

The restriction of a  polyhedral map $f|_\sigma$ to a simplex of the
triangulation is differentiable, although the map $f$ is generally not differentiable
everywhere on $M$.  In particular the pull back of the differential form $f^*\omega_M$ is well defined along each simplex
of the triangulation.
Accordingly, a polyhedral map such that the pullback of $f^*\omega_M$
agrees with the pullback of $\omega_M$ along each simplex is
called a \emph{polyhedral symplectic map} and the space of polyhedral
symplectic maps is denoted $\Symp(M,\omega_M,\scrT)$.

By analogy with the smooth setting, we introduce the space $\scrF(\scrT)$
as the space of families 
 $F=(F_\sigma)_{\sigma\in\scrK_4}$, where 
			 $\sigma$
belongs to the  set of $4$-simplices $\scrK_4$ and
				$F_\sigma$ is a constant
$\vecV$-valued differential $1$-form along~$\sigma$.
Similarly to the smooth case, there is a well defined linear operator associated
to the differentiation of polyhedral maps
$$
\scrD:\scrM(\scrT)\to\scrF(\scrT).
$$
In the case where all the pullbacks of the family
$F=(F_\sigma)\in\scrF(\scrT)$ agree along common faces,
we say that $F$ is a \emph{Whitney form}. The subspace of Whitney forms in
$\scrF(\scrT)$ is denoted $\scrF_c(\scrT)$.  In particular, the operator
$\scrD$ takes values in the space of Whitney forms.
A Whitney forms in
$\scrF_c(\scrT)$
 defines a  \emph{Whitney cohomology class} and we can introduce the subspace
$\scrF_\alpha(\scrT)$ of Whitney forms with cohomology class in the line
$\RR\alpha$ for any $\alpha\in H^1(M,\vecV)\setminus 0$.
The gauge group $\TT(\scrT)$ is the space of functions $\lambda
:\scrK_4\to S^1$ and its Lie algebra $\LieTT(\scrT)$ is the space of
functions $\zeta:\scrK_4\to\RR$. The  group $\TT$ is a real torus with dimension the number
of facets of $\scrK$ and the group  $\TT(\scrT)$ acts on $\scrF(\scrT)$  by complex multiplication,
as in the smooth case.
Finally, all the constructions of hyperKähler structures
and moment maps in the smooth setting hold formally for $\scrF(\scrT)$ acted
on by $\TT(\scrT)$ and we obtain the following result:
\begin{theointro}
		\label{theo:AP}
	The statements of Theorem~\ref{theo:A} and Theorem~\ref{theo:eqmu}
	hold in the polyhedral setting.
\end{theointro}

Continuing the analogy with the smooth case, 
the energy functional 
$$\phi:\scrF(\scrT)\to\RR
$$ 
is given similarly 
by $\phi(F)=\|\mub(F)\|^2_{L^2}$, where
$\mub:\scrF(\scrT)\to\LieTT(\scrT)^3$ is the polyhedral hyperKähler moment
map. The
\emph{polyhedral modified moment map flow} is then defined as
the downward gradient flow of the restricted functional
$\phi:\scrF_\alpha(\scrT)\to \RR$, for $\alpha\in H^1(M,\vecV)\setminus
0$. 
We may also consider the downward gradient flow of $\phi$ along the unit sphere
$\SS_\alpha(\scrT)$ in $\scrF_\alpha(\scrT)$ and obtain a \emph{polyhedral
renormalized flow}.

The polyhedral modified moment map flow and the polyhedral renormalized
flow are both gradient flows on finite dimensional
manifolds and  strong properties follow from the ordinary differential
equation theory.
In the case of  polyhedral modified moment
map flow, we prove the followind Duistermaat type theorem:
\begin{theointro}
	\label{theo:duistermaat}
	Let $\alpha$ be a  cohomology class in $H^1(M,\vecV)\setminus 0$.
	For every $F\in\scrF_\alpha(\scrT)$, there exists a unique solution
	of the polyhedral modified moment map flow $F_t$ defined for
	$t\in[0,+\infty)$, such that $F_0=F$. 
Furthermore, the flow line is convergent and its limit $F_\infty$ belongs
	to the vanishing locus of $\phi$. The extended flow 
	$$\Theta:[0,+\infty]\times\scrF_\alpha(\scrT)\to\scrF_\alpha(\scrT)
	$$
	given by $\Theta(t,F)=F_t$ and $\Theta(+\infty,F)=F_\infty$, defines a continuous retraction of
	$\scrF_\alpha(\scrT)$ onto the vanishing locus of $\phi$. Finally,
	the flow $\Theta$ has exponential convergence rate in a
	neighborhood of every regular point (cf.
	Definition~\ref{dfn:regular}) of the vanishing locus of $\phi$.
\end{theointro}
Given a symplectic class $\alpha\in H^1(M,\vecV)$, we consider the subspaces
$$
\scrM_\alpha(\scrT)\subset \scrM(\scrT) \quad \mbox{ and } \quad
\Symp_\alpha(M,\omega_M,\scrT)\subset \Symp(M,\omega_M,\scrT)
$$ of maps $f$ with
cohomology class $[\scrD f]=\alpha$, as in the smooth setting.

We discuss at \S\ref{sec:motivret} a potential application of
the modified moment map flow to identify the homotopy type of the space of
symplectic maps. Theorem~\ref{theo:duistermaat} shows that 
 hypothesis (H1)
and (H2) of \S\ref{sec:motivret} always hold in the polyhedral setting.
Under the additional hypothesis (H3) that $\Theta$ does  not admit any
non trivial flow line converging toward $F=0$,  the evaluation map
$ev:\scrM_\alpha(\scrT)\to M$ at some point $x_0\in M$ would be a homotopy
equivalence. 
But hypothesis $(H3)$ is too strong, in fact 
it never holds
in the polyhedral case. Indeed, Theorem~\ref{theo:polflow} states that
non trivial flow lines  converging toward $F=0$ always exist for the flow 
$\Theta$.
This motivates the introduction of blowup analysis, \emph{polyhedral soliton
equation} and 
\emph{polyhedral renormalized flow} similar to the smooth setting.
The prospect to 
obtain 
 a Morse-Bott theory adapted
to $\phi:\SS_\alpha(\scrT)\to\RR$ seems much higher than in the smooth
situation, thanks to the ODE
theory and the finite dimensional context.
At this stage we can show that the polyhedral renormalized flow satisfies
the following elementary properties:
\begin{theointro}
	\label{theo:polflow}
	Let $\alpha\in H^1(M,\vecV)$ be a symplectic class.
The polyhedral renormalized flow is a complete  gradient flow on the sphere
	$\SS_\alpha(\scrT)$. The solitons of the sphere are the fixed
	points of the flow and the limiting orbits of the  renormalized flow are
	contained in the space of solitons. Non proper solitons of
	$\SS_\alpha(\scrT)$ are in correspondence with elements of
	$\Symp_\alpha(M,\omega_M,\scrT)$, up to the action of $\vecV$  and
	the the antipodal map of the sphere. The space of 
	proper solitons in $\SS_\alpha(\scrT)$ is not empty. Furthermore,
	each proper soliton defines
	non trivial solutions of the polyhedral modified moment map flow,
	converging toward $F=0$.
\end{theointro}
Some more effort has to be undertaken
to prove that $\phi:\SS_\alpha\to \RR$ is  
Morse-Bott. Our attempts to  prove this have been unsuccessful so far.
We also failed to prove an
analogue of Theorem~\ref{theo:connected} in the polyhedral setting and we conclude our
introduction with 
the following conjecture:
\begin{conj}
	\label{conj:intro}
Given a symplectic cohomology class $\alpha$ and a Euclidean triangulation
	$\scrT_1$ of a $4$-dimensional hyperKähler quotient torus $M$,
	there exists a refinement
	$\scrT_2$ of $\scrT_1$ such that the energy functional
	$\phi:\SS_\alpha(\scrT_2)\to \RR$ has a well defined
	Morse-Bott theory.
\end{conj}

\subsection{Acknowledgments}
Most of this research was carried out in 2023-2024 thanks to the support of
the CNRS International Research Lab
CRM at Montreal. I was hosted by the CIRGET at UQAM university and wish to
thank  all my colleagues  for creating such a stimulating mathematical
environment, in particular, Antonio Alfieri, Vestislav Apostolov,
Steven Boyer, Charles Cifarelli, Olivier Collin,  Alexandra
Haedrich, Abdellah Lahdili, Duncan McCoy, Frédéric Rochon and Carlo Scarpa.

\tableofcontents

\section{Kähler moment map}
\label{sec:smap}
\subsection{Smooth symplectic maps}
We recall some basic facts in  the general case of a \emph{connected  
symplectic manifold}  $(M,\omega_M)$ of real dimension~$2n$. A~\emph{symplectic
form}  $\omega_M$ is a non degenerate closed differential 
$2$-form, defined on the smooth manifold
$M$. The symplectic form determines 
 a \emph{volume
form} 
$$
\vol_M =\frac{\omega_M^n}{n!},
$$
hence a \emph{symplectic orientation} for $M$.
The  \emph{moduli space of smooth maps} of $M$, denoted
$$
\scrM =\{f:M\to M, f\in C^\infty\}.
$$
is considered from a \emph{purely formal
perspective} in most of this paper. However, \emph{Hölder versions} of
$\scrM$ shall be introduced, when the analysis on Banach spaces is needed.
The \emph{group of diffeomorphisms}  
$\Diff(M)$ is contained in $\scrM$ and the subgroup of~\emph{symplectic maps} (or
symplectomorphisms),
denoted $\Symp(M,\omega_M)$,  consists of smooth map
$f\in\scrM$ such that  
\begin{equation}
	\label{eq:sympmap}
f^*\omega_M =\omega_M.
\end{equation}
Formula~\ref{eq:sympmap} implies that $f$ is a diffeomorphism, under some
sensible assumptions and we recall the classical result:
\begin{prop}
	\label{prop:classical}
	Let $(M,\omega_M)$ be a closed connected symplectic manifold and let
	$f:M\to M$, be a symplectic map.
	 Then $f$ is an orientation preserving diffeomorphism of~$M$. Thus
	 $\Symp(M,\omega_M)$ is a subgroup of $\Diff(M)$ and we 
	  have the  inclusions
$$
\Symp(M,\omega_M)\subset \Diff(M)\subset \scrM.
$$
\end{prop}
\begin{proof}
The non-degeneracy of $\omega_M$ implies that the tangent map $f_*:TM\to TM$ is a
fiberwise isomorphism of the tangent bundle. By the \emph{implicit function
	theorem}, it follows that the map $f$ is a local diffeomorphism. 
A symplectic map satisfies the identity $f^*\vol_M
=\vol_M$ so that $f$ preserves the  symplectic
orientation and preserves the cohomology class $[\vol_M] \in
	H^{2n}(M,\RR)$; in particular, $f$ has degree~$1$.
 An orientation preserving local diffeomorphism of degree $1$ is injective. 
	The fact that $f$ is an injective local diffeomorphism of a closed
	manifold  implies
	that $f$ is a global diffeomorphism.
\end{proof}

\subsection{Quotient Hermitian torus}
\label{sec:herm}
From this point, we  assume that the manifold $M$ is a \emph{quotient
torus} of the form
$$
M= V/\Gamma,
$$
where $V$ is
an \emph{affine space} and
 $\Gamma$ is a \emph{lattice} of the underlying vector space~$\vecV$.
 We consider the case where $\vecV$ is a complex vector space, endowed with a \emph{Hermitian
metric} 
$h_V$.
Recall that a Hermitian metric   is  a definite positive  sesquilinear
form on $\vecV$,
anti-$\CC$-linear in the first
variable. 
Then, we can write
$$
h_V(\eta_1,\eta_2) = g_V (\eta_1,\eta_2) + i\omega_V(\eta_1,\eta_2),
$$
 for every $\eta_1,\eta_2\in\vecV$,
where $g_V$ and $\omega_V$ are the real and imaginary parts of
$h_V$. By definition, $g_V$ defines a \emph{Euclidean metric} on
$V$ and~$\omega_V$, a symplectic form. 

The structure of complex vector space on $\vecV$ can be regarded as an
\emph{almost complex structure},
deduced from the complex multiplication by $i$,  understood a linear
endomorphism:
	\begin{align*}
		i:	\vecV & \longrightarrow \vecV \\
		v &\mapsto iv.
\end{align*}

The  action by translation of $\eta \in \vecV$ on $x\in V$, denoted $x\mapsto
x+\eta$, descends as a canonical  action of $\vecV$ on the
quotient torus $M=V/\Gamma$.
The Hermitian and complex structures on $V$ also descend to the quotient
$M$ as
well. The induced almost complex structure is still denoted 
 $i:TM\to TM$. The induced Hermitian structure, symplectic structure and
 Riemannian metric on the quotient $M$  are denoted  respectively~$h_M$,
 $\omega_M$ and~$g_M$.
In conclusion, we have  a flat \emph{Kähler structure} 
$$(M,g_M,i,\omega_M)$$
on the
quotient torus $M$, preserved by the action of $\vecV$ on $M$.

\subsection{Differentials and differentiation}
\label{sec:diffdiff}
The tangent bundle 
$\pi: TV\to V$
of the affine space $V$ is canonically isomorphic to the trivial
bundle $\pi_1:V\times \vecV\to V$,
where~$\pi_1$ is the  first canonical projection.
	The action of $\vecV$ on  $V$  induces an action
on the tangent bundle $TV\to V$, which is trivial on the fibers.
The tangent bundle $\pi:TM\to M$ is obtained as a quotient of $TV\to V$ by
the lattice $\Gamma$. It follows that the tangent bundle  $TM\to M$ 
is also canonically isomorphic to the
trivial bundle $\pi_1:M\times \vecV\to M$. Hence we obtain a smooth $\vecV$-invariant bundle map 
trivialization 
\begin{equation}
	\label{eq:rho}
\rho:TM\to \vecV,
\end{equation}
given by $\pi_2:M\times \vecV\to\vecV$, where $\pi_2$ is the
canonical projection on the second factor.
In addition, the map $\rho$
respects the
Kähler structures, in the sense that 
$$\rho\circ i = i\circ\rho, \quad
\rho^*\omega_V = \omega_M,\quad \rho^* g_V = g_M \mbox{ and } \rho^* h_V =
h_M,
$$
where the pullbacks are understood in the \emph{linear sense}: 
$$(\rho^*\omega_V)(\eta_1,\eta_2) = \omega_V(\rho(\eta_1),
\rho(\eta_2)), \mbox{ etc\dots}
$$
The tangent map  of  $f\in\scrM$  sits in the commutative diagram
\begin{equation}
	\begin{tikzcd}
		TM\ar[d, "\pi"]\ar[r,"f_*"]  \ar[rr,
		bend left = 40,"\scrD f"] & TM\ar[d,"\pi"]\ar[r,"\rho"]& \vecV \\
		M \ar[r,"f"]& M
\end{tikzcd}
\end{equation}
where 
\begin{equation}
	\label{eq:D}
\scrD f:TM \to \vecV,
\end{equation}
is  the $\vecV$-valued differential $1$-form, called the \emph{differential
of $f$},
defined~by
\begin{equation}
	\label{eq:diff}
	\boxed{\scrD f =\rho \circ f_*.}
\end{equation}
The operator $\scrD$ can be regarded as a linear differential operator of
order~$1$ 
$$
\scrD :\scrM\to\scrF = \Omega^1(M,\vecV),
$$
where $\scrF$ denotes the space of $\vecV$-valued differential $1$-forms on $M$.
 The operator $\scrD$ is closely related to
the usual differential $d$ according to
Remark~\ref{rmk:lift}.
\begin{rmk}
	\label{rmk:lift}
An alternate construction of  $\scrD f$ proceeds  as follows: let
$\tilde f:V\to V$ be a lift of $f:M\to M$  to  the universal cover $p:V\to
	M$. In other words, $\tilde f$ is a smooth
map such that $p\circ \tilde f = f\circ p$. The differential $d\tilde f$
is a  $\vecV$-valued $1$-form on $V$, which is to say an element of
$\Omega^1(V,\vecV)$. By construction $\tilde f$ is $\Gamma$-invariant, so
that $d\tilde f$ descends to the quotient as an element of 
$\Omega^1(M,\vecV)$, which agrees with  $\scrD f$.
\end{rmk}

Since $\vecV$ acts on $M$ by translations, the space of maps $\scrM$ admits
a canonical action by the space of $\vecV$-valued functions, $\Omega^0(M,\vecV)$: given $f\in\scrM$ and $\eta \in\Omega^0(M,\vecV)$,
we define $f+\eta \in\scrM$ by
$$
(f+\eta)(x) = f(x)+\eta(x)
$$
for every $x\in M$.
In particular, the space of constant $\vecV$-valued functions, identified to $\vecV$, acts on $\scrM$.
By construction, we have the following proposition:
\begin{prop}
	\label{prop:inj}
	Given two maps $f$ and $h\in\scrM$, 
we have
	$$
\scrD f = \scrD h
$$
if, and only if, there exists a vector $\eta\in\vecV$, such that
	$f=h+\eta$.
	In particular, the map $\scrD$ descends as a bijection
$$
	\scrM/ \vecV\stackrel{\scrD}{\longrightarrow}
	\scrD(\scrM)\subset\scrF,
$$
where $\vecV$ acts on $\scrM$ by translation.
\end{prop}
\begin{proof}
	The construction of $\scrD f$ and $\scrD h$ by 
	Remark~\ref{rmk:lift} is given by $d\tilde f$ and $d\tilde h$, where
	the maps $\tilde f, \tilde
	h: V\to V$ are lifts of $f$ and $g$.
 The equation
	$\scrD f = \scrD h$ is equivalent to $d\tilde f= d\tilde h$,
	where $d$ is the usual differential. Therefore $\tilde f$ and
	$\tilde h$ agree up to  a translation by a vector
	$\eta\in \vecV$, which proves the proposition.
\end{proof}

\subsection{Cohomology}
\label{sec:cohom}
In this section, we recall some basic facts about the cohomology of a torus
and homotopy classes of maps of a torus.
The usual exterior differentials  for real valued differential forms
 are denoted $d_k:\Omega^k(M) \to \Omega^{k+1}(M)$, for $k\geq -1$, with the
 convention that $d_{-1}:0\to \Omega^0(M)$ is the $0$-map. 
 These operators admit a canonical  extension
to  $\vecV$-valued differential
 forms, 
denoted similarly
$d_k:\Omega^k(M, \vecV) \to \Omega^{k+1}(M, \vecV)$,
and  defined by the property that for every real valued differential $k$-form
$\beta$ and vector $\eta\in\vecV$, we have
\begin{equation}
	\label{eq:dfndiff}
	d_k (\beta\otimes \eta) = (d_k\beta)\otimes \eta,
\end{equation}
where $d_k\beta$ is the usual exterior differential.
Thus we have  a complex 
$$
0\stackrel{d_{-1}}\longrightarrow \Omega^0(M,\vecV)
\stackrel{d_{0}}\longrightarrow \Omega^1(M,\vecV)
\stackrel{d_{1}}\longrightarrow \cdots
\quad \cdots \stackrel{d_{k-1}}\longrightarrow \Omega^k(M,\vecV)
\stackrel{d_{k}}\longrightarrow \cdots
$$
 with associated \emph{De Rham cohomology spaces} 
$$
H^k(M,\vecV)= \frac{\ker d_k  }{\Im d_{k-1}} \mbox{ defined for every }
k\geq 0
$$
and canonical isomorphisms
$$
H^k(M,\RR)\otimes\vecV\simeq H^k(M, \vecV).
$$

\begin{lemma}
	\label{lemma:cohom}
The image of the map $\scrD:\scrM\to\scrF$ consists of closed forms. In
	other words
$d_1\circ \scrD= 0$,
which is to say
$$
\Im\scrD\subset \ker d_1.
$$
\end{lemma}
\begin{proof}
	The result follows immediately from Remark~\ref{rmk:lift}, Formula~\eqref{eq:dfndiff} and the fact
	that $d^2=0$.
\end{proof}

Every form $\scrD f\in\scrF$ is a closed $\vecV$-valued
$1$-form by Lemma~\ref{lemma:cohom} and defines a cohomology class denoted 
$[\scrD f]\in H^1(M,\vecV)$. Thus, we  have a canonical map 
\begin{equation}
\begin{tikzcd}[  baseline=(current  bounding  box.center), cramped,
  row sep = 0ex,
  column sep = 1.5em,
  /tikz/column 1/.append style={anchor=base east},
  /tikz/column 2/.append style={anchor=base west}
   ]
		 \alpha: \scrM \ar[r] & H^1(M,\vecV) \\
		 f\ar[r,mapsto] & {\alpha(f)=[\scrD f ]}.
\end{tikzcd}
\end{equation}
The quotient torus $M$ is equipped with a canonical isomorphism
$$
\psi:\vecV^*\to H^1(M,\RR).
$$
 Indeed, any element  $\beta \in \vecV^*$ can be understood as a
constant differential $1$-form on $V$. The constant form $\beta$ descends to the
quotient $M$ as a closed $1$-form denoted $\beta$ as well.
Finally, $\beta$ defines a cohomology class
$\psi(\beta)=[\beta]\in H^1(M,\RR)$. The fact that 
$\psi$ is an isomorphism  is a classical fact for tori.
The \emph{dual lattice} $\Gamma^*\subset \vecV^*$ is defined by
$$
\Gamma^* = \{\beta \in \vecV^*, \forall\gamma\in
\Gamma,\beta(\gamma)\in\ZZ \}.
$$
 Using the isomorphism~$\psi$, 
 the dual lattice $\Gamma^*\subset \vecV^*$ is
understood as  a
lattice in $H^1(M,\RR)$ identified to 
the group of integral classes  
$H^1(M,\ZZ) \subset H ^1(M,\RR) $ and we have a canonical isomorphism
$$
\psi:\Gamma^*\stackrel \sim \to H^1(M,\ZZ).
$$
Every map $f\in\scrM$, defines a canonical morphism 
$$f^*:H^1(M,\ZZ)\to
H^1(M,\ZZ)$$
 which depends only on the homotopy class of of $f$.
Using the isomorphism~$\psi$, the map $f^*\in\End(H^1(M,\ZZ))$ 
is identified to a lattice endomorphism
$f^*\in\End(\Gamma^*)$.
The canonical isomorphisms
 $\End(\Gamma^*)\simeq \End(\Gamma) \simeq
 \Gamma^*\otimes_\ZZ\Gamma$ show that
$$
\End(\Gamma^*)\simeq H^1(M,\ZZ)\otimes_\ZZ \Gamma
\subset H^1(M,\RR)\otimes_\RR   \vecV=  H^1(M,\vecV).
$$
If a class $\alpha\in H^1(M,\vecV)$ belongs to the image of
$\End(\Gamma^*)$, we say that it is an \emph{integral cohomology class}. 
We recall the classical result of topology,  well known in the case of a
circle:
\begin{prop}
	\label{prop:top}
	For $f\in\scrM$, the  morphism $f^*\in\End(\Gamma^*)$,  understood
	as an integral cohomology class in
	$H^1(M,\vecV)$, agrees 
with $\alpha(f)=[\scrD f]$.
	
	Two smooth maps $f$ and 
	$h: M\to M$ are homotopic if, and only
	if, $\alpha(f)=\alpha(h)$. 
	Furthermore, the map 
	\begin{align*}
		\scrM &\to
	\End(\Gamma^*)\\
		f& \mapsto f^*
	\end{align*}
	 is surjective and
	induces bijection between
	$\End(\Gamma^*)$ and the set of homotopy classes of smooth maps
	$f:M\to M$.
\end{prop}

According to the above proposition, homotopy classes of maps are
parametrized by integral cohomology classes $\alpha\in H^1(M,\vecV)$. Thus, it makes sense
to use the notation 
$$\scrM_\alpha=\{f\in\scrM, [\scrD f]= \alpha\}
$$
for each homotopy component of the space
of smooth maps of the torus. Similarly we use the notations
$$\Symp_\alpha(M,\omega_M) \quad\mbox{ and } \quad \Diff_\alpha(M),
$$for the subspaces of maps with
integral cohomology class $\alpha$.

\begin{rmk}
	\label{rmk:linearrep}
	The surjectivity property of
	Proposition~\ref{prop:top} can be realized using \emph{affine
	maps}:
	the canonical map
	$\End(\Gamma)\longrightarrow\End(\Gamma^*)$
	is an isomorphism, so that an element of $\alpha\in\End(\Gamma^*)$
	corresponds to a morphism
	$\vec f$ of the lattice $\Gamma$. The endomorphism $\vec f$ extends canonically as a linear map
	$\vec f \in \End(\vecV)$. Any affine map $f:V\to V$ with
	linear part $\vec f$ descends to the quotient
	as a map $f:M\to M$ such that  $\alpha(f)=\alpha$.
\end{rmk}

\begin{dfn}
	A cohomology class $\alpha \in H^1(M,\vecV)$ is called symplectic,
	if there exists $f\in\scrM$, such that
	$\alpha = [\scrD f ]$ and $f^*[\omega_M]=[\omega_M]$.
\end{dfn}

\begin{rmk}

	Let $\alpha\in H^1(M,\vecV)$ be a symplectic cohomology class. 
	According to Remark~\ref{rmk:linearrep}, 
	there exists an affine
	map $f:V\to V$ which descends to the quotient as $f:M\to M$ with
	the property that $\alpha=[\scrD f]$.
	We have $f^*[\omega_M]
	=[\omega_M]$ since $\alpha$ is symplectic. From the fact that $f$ is
	affine we deduce that $f^*\omega_M$ is a constant differential form, which
	forces $f^*\omega_M=\omega_M$. Thus $f$ is
	an affine symplectic map.
	We conclude that the set of symplectic classes is
	in $1:1$-correspondence with the group of symplectic isomorphisms of
	$\Gamma$
	\begin{equation}
		\Symp(\Gamma,\omega_V)=  \End(\Gamma)\cap
		\Symp(\vecV,\omega_V)
	\end{equation}
	where $\Symp(\vecV,\omega_V)$ is the group of linear symplectic
	endomorphisms of $\vecV$.
	In the above definition, every element $\vec f\in\End(\Gamma)$
	induces  a canonical linear endomorphism of $V$ and we may consider
	that $\End(\Gamma)\subset \End(\vecV)$.
\end{rmk}

\begin{rmk}
	\label{rmk:surj}
	The surjectivity property of Proposition~\ref{prop:top} can also be established
	directly, relying on integration:
	let $\alpha$ be an integral cohomology class in $H^1(M,\vecV)$ and
$F\in\scrF$, a representative of $\alpha$. Let
	Let $\tilde F= p ^* F\in\Omega^1(V,\vecV)$ be the lift of $F$ to the universal cover $p:V\to
M$. 
	Given $y_0,y_1$ and $ y\in V$, let $\tilde f:V\to V$ be the function
defined by
	\begin{equation}
		\label{eq:integral}
\tilde f(y)= y_1 + \int_{y_0}^y \tilde F,
	\end{equation}
where the integral is taken along any smooth path in $V$ from~$y_0$ to~$y$.
The integral is independent of the choice of path
between~$y_0$ and~$y$ since $\tilde F$ is closed and $V$ is simply connected.
By construction, 
$\tilde f(y+\gamma)-\tilde f(y)\in\Gamma$ for every $\gamma\in\Gamma$, since $\alpha=[F]$ is an integral
class. Hence $\tilde f$ descends as a smooth map $f:M\to
	M$. Using the notations $x_0=p(y_0)$, $x_1=p(y_1)$, we have
	$f(x_0)=x_1$, $\scrD f = F$ and $[\scrD f]=[F]=\alpha$, by
	definition.
	In
	conclusion we have  an integral
	\begin{align}
		\chi:\alpha & \to\scrM_\alpha \label{eq:primit}\\
		F &\mapsto f\nonumber
	\end{align}
	such that $f(x_0)=x_1$ and $f$ is defined by Formula~\eqref{eq:integral}.
\end{rmk}

\subsection{Bundle structures}
\label{sec:bunstruct}
In \S\ref{sec:herm}, we outlined how a Hermitian structure on $\vecV$
induces a flat Kähler structure~$(M,g_M,i,\omega_M)$ on the quotient torus~$M=V/\Gamma$. 
We consider a second almost complex
structure
$$J_V:\vecV\to\vecV$$
 on~$\vecV$, compatible with the Riemannian metric $g_V$, in the sense that
$$g_V(J_V \eta_1,J_V \eta_2)= g_V(\eta_1,\eta_2),$$
for every $\eta_1, \eta_2\in\vecV$. The
compatible almost complex structure $J_V$ provides an alternate 
symplectic form on $V$ defined by
$$
\hat\omega_{V} = g_V(J_V\cdot,\cdot).
$$
The almost complex structure $J_V$ and the symplectic form $\hat \omega_V$  descend to the
quotient $M$ and  provide an alternate Kähler structure
$(M,g_M,J_M,\hat \omega_M)$, where the Kähler form $\hat\omega_M$ is induced by
$\hat\omega_V$.
In conclusion, we have two competing Kähler structures on the torus
$$(M,g_M,i,\omega_M)\mbox{ and } (M,g_M,J_M,\hat\omega_M).
$$

The Riemannian metric induces a \emph{musical isomorphism} between the cotangent bundle 
$T^*M\to M$ and the tangent bundle $TM\to M$. 
Thus the cotangent bundle and all the bundles of $k$-forms
$\Lambda^k M\to M$ are endowed with an induced fiberwise Euclidean inner product
 deduced from $g_M$ and denoted $g$. 
It follows that the complex vector bundles 
$$\Lambda^k M\otimes_{\RR}
\vecV\to M$$
have several induced structures:
\begin{enumerate}
	\item  The vector bundles $\Lambda^k M \otimes_\RR \vecV \to M$  carry  fiberwise 
Euclidean inner products, denoted $g$ as well,
		induced by $g$ and $g_V$.
	\item \label{struct:i} The vector bundles $\Lambda^k M \otimes_\RR \vecV \to M$  carry a canonical structure
	of complex vector bundle deduced from the structure of complex
		vector space on~$\vecV$.
\item  The vector bundle $T^* M \otimes_\RR \vecV\to M$  carries a second fiberwise 
	almost complex structure $J$ deduced from $J_M$ and 
		defined by
$$
		J\cdot F =-F\circ J_{M} \mbox{ for every $F\in
		T^*M\otimes_\RR\vecV$}.
$$
The almost complex structure $J$ commutes with the
		multiplicative action of $\CC$ of complex vector bundle.
		Hence $J$ is complex linear in this sense. In particular $i$ and $J$
		commute:
		$$
		i  (J\cdot F)= -iF\circ J_M = J\cdot (i F).
		$$
		 \item The metrics are compatible with the almost complex
			 structures:
			 $$
		g(iF_1,iF_2)=g(F_1,F_2)\quad \mbox{ and } \quad g(J\cdot
		F_1,J\cdot F_2)=
		g(F_1,F_2)
		$$
		for every $F_1, F_2\in
		T_x^*M\otimes\vecV$.
\item The bundle $T^* M \otimes_\RR \vecV\to M$  carries a  fiberwise
	anti-symmetric $2$-form, denoted $\hat\omega$, defined by
$$
		\hat\omega (F_1,F_2)= g(J\cdot F_1,F_2).
$$
\end{enumerate}
\begin{rmk}
	Once the Euclidean inner product $g$ is understood, we will often use
the more compact notations
$$
	\ip{F_1,F_2}=g(F_1,F_2)\quad \mbox{and} \quad |F |^2=\sqrt{g(F,F)}.
$$
\end{rmk}

\subsection{An involution}
We define a bundle endomorphism 
$$\begin{tikzcd}
	T^*M\otimes_\RR\vecV \ar[rr,"R"]  \ar[dr]&  &\ar[dl] T^*M\otimes_\RR\vecV\\
	&M&
\end{tikzcd}
$$
 by
the formula
$$
R F =  iJ\cdot F
$$
 for every $F\in T^*M\otimes_\RR\vecV$.
We have $R^2 F= i^2J^2 \cdot F=F$ since $i$ and $J$ commute, which shows
that $R$ is an \emph{involution}.
This provides a splitting
$$
T^*M \otimes_\RR \vecV =\Lambda^{1,+} \oplus \Lambda^{1,-} 
$$
where the
$\Lambda^{1,\pm}$ are the eigenspaces of $R$ associated to the eigenvalues $\pm 1$.
By definition, the elements $F\in \Lambda^{1,+}$ (\resp $\Lambda^{1,-}$) 
are the  $J_M$-complex (\resp
anti-complex)  morphisms $F:TM\to\vecV$. 

The involution $R$ induces a canonical involution of the space of differential
forms 
$$
\scrR:\scrF\to\scrF
$$
defined by 
$(\scrR F)_x = R (F_x)$ for every $x\in M$.
Similarly, the fiberwise almost complex structure $J$ of $T^*M\otimes\vecV$
defines a
canonical almost complex structure 
$$\cJ:\scrF\to\scrF
$$
given by 
$(\cJ F)_x=
J(F_x)$
for  $F\in\scrF$ and every $x\in M$.
Then $\scrR F= i\cJ F$ and we have a similar splitting
$$
\scrF =\scrF^+ \oplus \scrF^-
$$
into complex  and anti-complex differential forms. It follows that every
differential form
$F\in\scrF$ admits a canonical decomposition
$$
F= F^+ + F^-,
$$
with components $F^\pm\in \scrF^\pm$.

\subsection{Gauge group action}
\label{sec:gac}
We recall that the complex vector bundle structure of $T^* M \otimes_\RR \vecV \to M$ is
deduced from the  complex vector space structure of $\vecV$
(cf. \ref{sec:bunstruct}, item
\eqref{struct:i}). 
The space of differential forms $\scrF$ inherits a module structure
over the ring
of complex
valued functions  $\lambda:M\to \CC$, with multiplication given by
$$(\lambda F)_x = \lambda(x)F_x \quad \mbox{ for every $x\in M$}.$$
We define an action of the \emph{infinite dimensional
complex torus} 
$$
\TT^\CC = C^\infty(M,\CC^*) 
$$
on $\scrF$ 
as follows: for every $\lambda\in\TT^\CC$ and $F\in\scrF$, we put
\begin{equation}
	\label{eq:action}
\lambda\cdot F= {\bar\lambda}^{-1}F^++\lambda F^-,
\end{equation}
 where multiplication in the
RHS comes from the module structure and $\bar\lambda$ denotes the
complex conjugate.
The gauge group $\TT^\CC$ contains  the \emph{real torus}
$$
\TT = C^\infty(M,S^1)\subset\TT^\CC, 
$$
where $S^1\subset \CC$ is identified to complex numbers of module $1$.
If  $\lambda\in \TT$, the action on $\scrF$ is given
by the standard complex multiplication $\lambda\cdot F =\lambda F$ since
$\lambda^{-1}=\bar\lambda$ in this case.
\begin{rmk}
	The action of $\TT^\CC$ defined by \eqref{eq:action} depends on the splitting
	$\scrF^\pm$, and ultimately on $J_M$. However, the action of $\TT$,
	which is merely the standard multiplication by a
	complex function,  does not
	depend on~$J_M$.
\end{rmk}

\subsection{Infinitesimal action}
Formally, $\TT^\CC$ is understood as a \emph{Lie group} acting on $\scrF$ with
Lie algebra $\LieTT^\CC$ identified to $C^\infty(M,\CC)$. The \emph{exponential map}
$$
\exp:\LieTT^\CC \to \TT^\CC
$$
is  defined by the formula
$$\exp(\zeta)= e^{i\zeta},\quad  \mbox{ for } \zeta\in\LieTT^\CC= C^\infty(M,\CC).$$
With these conventions, the Lie algebra
$\LieTT$ of the subgroup $\TT$ is identified to~$C^\infty(M,\RR)$ via the exponential
map. 
As usual, in the context of a Lie group action, the infinitesimal
action of an element of the Lie algebra $\zeta\in\LieTT^\CC$ is given the vector
field $X_\zeta$ on $\scrF$, defined by the formula
$$
X_\zeta(F) = \left . \frac{\del}{\del t}\exp(t\zeta)\cdot F \right|_{t=0}.
$$
\begin{lemma}
	\label{cor:infin}
The vector field $X_\zeta$ associated to the  infinitesimal action of
	$\zeta\in\LieTT^\CC$ on $\scrF$ satisfies the formula
$$
X_\zeta (F) = i \bar \zeta F^+ + i\zeta F^-,
$$
for every $F\in\scrF$.
	In the special case where
	$\zeta\in
	\LieTT$  we have
	\begin{equation}
		\label{eq:infintt}
		X_\zeta(F)= i\zeta F.
	\end{equation}
	 If $\zeta\in i\LieTT$, then
	 $$X_\zeta(F)=
-i\zeta \scrR F=\zeta \cJ\cdot F$$ 
and for every $\zeta \in\LieTT$, we have
\begin{equation}
	\label{eq:cxaction}
	\cJ \cdot X_\zeta(F) = i\zeta \cJ\cdot F = X_{i\zeta}(F).
\end{equation}

\end{lemma}
\begin{proof}
	This follows immediately from formula \eqref{eq:action} and the
	definition of the exponential map.
\end{proof}

\subsection{Kähler structure on the moduli space}
\label{sec:kahl}
The moduli space $\scrF$ of $\vecV$-valued $1$-forms  is endowed with a formal
canonical Kähler structure described below~:
\begin{enumerate}
	\item For $F_1$ and $F_2$ in $\scrF$, we define a Euclidean metric
		$\cG$ on $\scrF$ by
		\begin{equation}
			\label{eq:metr}
		\cG(F_1,F_2) = \int_M g(F_1,F_2) \vol_M,
		\end{equation}	
		where $\vol_M$ is the volume form associated to
		$\omega_M$.
	As $g$ is defined on every tensor bundle, it follows that the Euclidean metric $\cG$
		is defined for every tensor space as well, by
		the same formula. The short notation
		$$\ipp {\cdot,\cdot}=\cG(\cdot,\cdot)$$
		and the corresponding
		$L^2$-norm notation 
		$$\|F\|_{L^2}=\sqrt{\ipp{F,F}}
		$$
		will often be used instead of $\cG$.

\item The space $\scrF$ is endowed with  the almost complex structures
	$\cJ$ defined above: for $F\in\scrF$, 
		\begin{equation}
			\label{eq:ac}
			(\cJ\cdot F)|_x = J\cdot F_x 
		\end{equation}
		for every $x\in M$. Then $\cJ$ is compatible with
		$\cG$ in the sense that
		$$ \cG(\cJ\cdot,\cJ\cdot)=\cG.$$
	\item We define a symplectic form $\Omega$ on $\scrF$ by 
		\begin{equation}
			\label{eq:symp}
		\Omega(F_1,F_2) =\cG(\cJ\cdot F_1,F_2) =
		\int_M\hat\omega(F_1,F_2)\vol_M.
		\end{equation}
\end{enumerate}
In particular $(\scrF,\cG,\cJ,\Omega)$ is a formal Kähler structure.
We summarize our construction in the next proposition.
\begin{prop}
	\label{prop:strk}
Let $M=V/\Gamma$ be a quotient torus, where $V$ is an affine space with
	lattice $\Gamma$ and 
	$\vecV$,  a complex vector
	space endowed with a Hermitian structure $h_V$, as an underlying
	vector space.
	For every almost complex
	structure $J_V$  compatible with the Euclidean metric
	$g_V$ deduced from $h_V$, there exists a natural formal Kähler structure
	$(\cG,\cG,\cJ, \Omega)$
	on $\scrF$ 
	defined by Formulas~\eqref{eq:metr}, \eqref{eq:ac} 
	and \eqref{eq:symp},
	acted on by the gauge group~$\TT^\CC=C^\infty(M,\CC^*)$, with an
	action given by
	Formula~\eqref{eq:action}.
\end{prop}

\begin{prop}
	\label{prop:presj}
	The $\TT^\CC$-action on $\scrF$ preserves the almost complex structure $\cJ$. 
	Furthermore, the action of $\TT^\CC$  is the $\cJ$-complexified
	action of $\TT$. 
\end{prop}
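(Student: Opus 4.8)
The plan is to analyze everything through the eigenspace decomposition $\scrF=\scrF^+\oplus\scrF^-$ of the involution $\scrR$, on which both the complex structure $\cJ$ and the $\TT^\CC$-action take a particularly transparent form. First I would record that multiplication of a form by an arbitrary complex-valued function preserves the splitting: since the left multiplication by $i$ commutes with the right action $J$ (this is the complex linearity $i\cdot J\cdot F=J\cdot i\cdot F$ noted above), it commutes with $R=iJ$, hence with $\scrR$, so that $\mu F^\pm\in\scrF^\pm$ for every $\mu\in C^\infty(M,\CC)$. Consequently the action splits componentwise, $(\lambda\cdot F)^+=\bar\lambda^{-1}F^+$ and $(\lambda\cdot F)^-=\lambda F^-$, and in passing this makes it clear that $\lambda\mapsto(\lambda\,\cdot)$ is a genuine group action of $\TT^\CC$.

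Next I would pin down $\cJ$ on each summand. From $RF=\pm F$ together with $R=iJ$ one reads off $J\cdot F=-iF$ on $\scrF^+$ and $J\cdot F=+iF$ on $\scrF^-$; since $\cJ$ is fiberwise $J$, this means $\cJ=-i$ on $\scrF^+$ and $\cJ=+i$ on $\scrF^-$. With these two facts, preservation of $\cJ$ reduces to a one-line computation: for $F=F^++F^-$,
\begin{align*}
\cJ\cdot(\lambda\cdot F)&=\cJ\cdot(\bar\lambda^{-1}F^+)+\cJ\cdot(\lambda F^-)=-i\,\bar\lambda^{-1}F^++i\,\lambda F^-,\\
\lambda\cdot(\cJ\cdot F)&=\bar\lambda^{-1}(-iF^+)+\lambda(iF^-)=-i\,\bar\lambda^{-1}F^++i\,\lambda F^-,
\end{align*}
so the two sides coincide, each $\lambda\in\TT^\CC$ acts $\cJ$-linearly, and the action preserves $\cJ$.

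For the second assertion, that the $\TT^\CC$-action is the complexification of the $\TT$-action, I would verify the two standard requirements. On the group level, if $|\lambda|\equiv 1$ then $\bar\lambda^{-1}=\lambda$, whence $\lambda\cdot F=\lambda(F^++F^-)=\lambda F$ is exactly the ordinary multiplication action of $\TT$ recalled in \S\ref{sec:gac}. On the Lie-algebra level, I would differentiate $\exp(t\xi)\cdot F$ to recover the infinitesimal generator $X_\xi(F)=i\bar\xi F^++i\xi F^-$; for real $\xi\in\LieTT$ this is $X_\xi(F)=i\xi F$, the generator of the $\TT$-action, and the same computation yields the key relation $\cJ\cdot X_\xi=X_{i\xi}$ of \eqref{eq:cxaction}. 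This says precisely that the purely imaginary directions $i\LieTT\subset\LieTT^\CC$ are generated by applying $\cJ$ to the $\TT$-generators, which is the defining property of a complexified action.

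The only genuine subtlety — and the step I would watch most carefully — is the bookkeeping of the two competing complex structures: the left multiplication by $i$ inherited from $\vecV$, which governs the defining formula $\lambda\cdot F=\bar\lambda^{-1}F^++\lambda F^-$, versus the right structure $\cJ=J$ entering the Kähler form $\Omega$. These agree only up to sign on each summand ($\cJ=\mp i$ on $\scrF^\pm$), and it is exactly the appearance of $\bar\lambda^{-1}$ rather than $\lambda$ on the $\scrF^+$-component that is engineered to make each $\lambda$ holomorphic. Getting these signs mutually consistent is the heart of the matter; the remainder of the argument is purely formal.
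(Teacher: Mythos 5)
Your proof is correct and takes essentially the same approach as the paper: the paper's proof consists of the observation that the $\TT^\CC$-action commutes with $\cJ$ (which you verify explicitly via the splitting $\scrF=\scrF^+\oplus\scrF^-$ and the identification $\cJ=\mp i$ on $\scrF^\pm$), followed by the relation $\cJ X_\xi = X_{i\xi}$ of \eqref{eq:cxaction} for the complexification claim, exactly as in your last step. You have simply filled in the details that the paper leaves as an observation.
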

\begin{proof}
It is sufficient to prove that the $\TT^\CC$ action commutes with $\cJ$ and
this proves the first statement. By definition $\scrR=i\cJ$, hence
	$-i\scrR=\cJ$. Furthemore, $\scrR$ is a module morphism over the
	ring of  complex valued functions. Then
	for every $\lambda\in \TT^\CC$, we have
	$\cJ\cdot\lambda\cdot F = -i\scrR \lambda \cdot F$ and, by definition of
	the torus action~\eqref{eq:action},
	\begin{align*}
		-i\scrR\lambda \cdot F &= -i\scrR( \bar\lambda^{-1}F^+ + \lambda
	F^-) \\
		&=-i(\bar\lambda^{-1}\scrR F^+ +\lambda\scrR F^-)\\
		&=  -\bar\lambda^{-1}iF^++ \lambda iF^-\\
		&= \lambda\cdot (-i\scrR
	F)\\
		&=\lambda\cdot \cJ \scrR F.
	\end{align*}
We conclude that $\lambda\cdot\cJ F= \cJ\lambda\cdot F$.
	
	The second part of the statement follows from
	Corollary~\ref{cor:infin}.
	By construction we have the splitting of the Lie algebra with
	trivial Lie bracket $\LieTT^\CC= \LieTT\oplus i\LieTT$, where
	$\LieTT$ is identified to real valued functions and $\LieTT^\CC$
	complex valued functions.
	By~Formula \eqref{eq:cxaction}, the infinitesimal
	action satisfies $\cJ X_\zeta = X_{i\zeta}$ for every $\zeta\in\LieTT$,
	which proves the second statement of the lemma.
\end{proof}

\begin{prop}
	\label{prop:presk}
The Kähler structure  $(\scrF,\cG,\cJ,\Omega)$ is invariant under the
	action of $\TT$.
\end{prop}
\begin{proof}
For  $\lambda\in\TT$, the action is just the usual complex multiplication
	$\lambda\cdot F =\lambda F $, so that $g(\lambda F,\lambda
	F)=|\lambda|^2g(F,F)=g(F,F)$. Therefore $\cG(\lambda\cdot F,\lambda\cdot
	F)=\cG(F,F)$ and the $\TT$-action preserves $\cG$. By
	Proposition~\ref{prop:presj}, the action
	preserves the almost complex structure $\cJ$, hence the Kähler form
	$\Omega$ is preserved as well.
\end{proof}

\subsection{Hamiltonian gauge group action}
\label{sec:ham}
The $\TT$-action on the moduli space~$\scrF$ turns
out to be Hamiltonian, as we are going to show in this section.
Let
$$
\mu : \scrF \to \LieTT= C^\infty(M,\RR)
$$
be the map given by the formula
\begin{equation}
	\label{eq:mmdef}
\mu(F) = -\frac 12 g(\scrR F,F).
\end{equation}
The map $\mu$ is $\TT$-invariant and simple computations give
\begin{align}
	D\mu|_F\cdot \dot F &= -\frac 12 \Big (g(\scrR F,\dot F)+g(\scrR \dot
	F,F)\Big )
	\\
	&=- g(\scrR
	F,\dot F)\label{eq:diffmu}\\
	&= - g(i\cJ F, \dot F)\\
	&= -\hat \omega(iF,\dot F).
\end{align}
Using the $L^2$-inner product on $\LieTT$,
we compute
\begin{align*}
	\ipp{D\mu|_F\cdot \dot F,\zeta}&=-\int_M \hat\omega( i F, \dot
F)\zeta\;\sigma_M\\
	&=-\int_M \hat\omega( i\zeta F, \dot
F)\;\vol_M \\
	&=-\Omega(i\zeta F,\dot F)\\
	&= -\iota _{X_\zeta(F)} \Omega(\dot F) ,
\end{align*}
where $X_\zeta$ is the infinitesimal action of $\zeta\in\LieTT$  on $\scrF$, given by
Formula~\eqref{eq:infintt}.
The above identity shows that $\mu$ is a moment map for the action of $\TT$.
We summarize our construction as follows:
	\begin{theo}
		\label{theo:ham}
		Let $M=V/ \Gamma$ be a quotient torus, where $V$ is an
		affine space with $\vecV$ as an underlying vector space. We
		assume that $\vecV$ is a complex Hermitian vector space,
		endowed with an
		an alternate almost complex structure $J_V$ as in
		Proposition~\ref{prop:strk}.
		 Then the gauge group action of $\TT$ on the moduli space $\scrF$
		 is formally Hamiltonian with respect to $\Omega$, with
		 moment map $\mu$ given by Formula~\eqref{eq:mmdef}.
 In other
	words,
	for every $ \zeta\in\LieTT$, we have
	$$
		D\ipp{\mu ,\zeta}  = -\iota_{X_\zeta}\Omega,
$$
where $X_\zeta$ is the infinitesimal action of $\zeta$ on $\scrF$.
\end{theo}
\subsection{Moment map and symplectic density}
\label{sec:density}
The moment
map defined by Formula~\eqref{eq:mmdef} is some type of \emph{symplectic
density}, as we are going to see.
For $F\in\scrF$, we have by definition
$$ 2\mu(F) =  g ( i F \circ J_M ,F).
$$
We choose a local oriented orthonormal frame
$(e_1,f_1,\cdots,e_n,f_n)$ of $T_xM$  adapted to the Kähler structure
$(g_M,J_M,\hat\omega_M)$. This means that $J_Me_j=f_j$, so that the
symplectic form is given by the formula
$$
\hat \omega_M=\sum_{k=1}^n e_j^*\wedge f_j^*
$$
on $T_xM$.
Then
by definition of the metric $g$
\begin{align*}
	2\mu(F) &=  \sum_{k=1}^n  g_V(i F\circ J_M (e_k),F(e_k))
+ g_V(i F\circ J_M (f_k),F(f_k))\\
	&= \sum_{k=1}^n ig_V (iF(f_k),F(e_k) )
 -g_V(i F(e_k),F(f_k) ) \\
	&=  -2\sum g_V(i F(e_k),F(f_k) )\\
	&= -2 \sum \omega_V (F(e_k),F(f_k))\\
\end{align*}
The pullback of the Euclidean symplectic
form $\omega_V$ by $F$ is defined by
$$
F^*\omega_V(\eta_1,\eta_2) = \omega_V(F(\eta_1),F(\eta_2)),
$$
and the above computation shows that
$$
\mu(F) = - \sum_{k=1}^n F^*\omega_V(e_k,f_k).
$$
Assuming $n=2$ for simplicity, we have
$$
\hat \omega_M = e_1^*\wedge f_1^*+e_2^*\wedge f_2^* 
$$
so that
\begin{equation*}
F^*\omega_V\wedge \hat\omega_M = -\mu(F)\vol_M
\end{equation*}
and  we deduce the following result~:
\begin{prop}
	\label{prop:density}
In the case where $M=V/\Gamma$ is a torus of real dimension~$4$, we have 
\begin{equation}\label{eq:density}
	\mu(F) = -\frac {(F^*\omega_V)\wedge \hat\omega_M}{\vol_M},
\end{equation}
for every $F\in\scrF$.
In particular, if $F=\scrD f$ for some $f\in\scrM$, then
$$
	\mu(\scrD f) =  -\frac {(f^*\omega_M)\wedge \hat\omega_M}{\vol_M}.
$$
\end{prop}

\section{HyperKähler moment map}
\label{sec:hk}

\subsection{HyperKähler torus}
\label{sec:hktorus}
We restrict our attention to the case of a hyperKähler torus of real dimension $4$.
More precisely, we assume that the vector space  $\vecV$ is equiped with a
linear
isomorphism $\vecV\simeq\HH$ with 
the space of quaternions $\HH$. 
Recall that the space of quaternions $\HH$ can be constructed as a $4$-dimensional real
vector space: every
quaternion $q\in\HH$ can be written as a linear combination
$$
q=a+bi+cj+dk,
$$
where $a,b,c ,d $ are real numbers and the quaternions
$i$,
$j$ and $k$  satisfy the \emph{quaternionic relations} 
$$
i ^2 = j ^2 = k^2= ijk = -1.
$$
 In particular, we have a canonical inclusion 
 $\CC\subset \HH$, since every complex number of the form
$\lambda=a+bi$ is by definiton a quaternion. 
Among the  various possible structures of complex vector space on $\HH$, we
define the action
of $\lambda\in\CC$, on $q\in\HH$ 
 by the 
\emph{quaternionic multiplication on the right}:
$$
\lambda\cdot q = q\lambda.
$$
This multiplicative action of $\CC$ endows $\HH$ with a structure of complex vector space.
In particular, this complex
structure is such that the map
\begin{equation}
	\label{eq:isom}
	\begin{tikzcd}[  baseline=(current  bounding  box.center), cramped,
  row sep = 0ex,
  column sep = 1.5em,
  /tikz/column 1/.append style={anchor=base east},
  /tikz/column 2/.append style={anchor=base west}
   ]
		  \CC^2 \ar[r] & \HH \\
		 (z_1,z_2)\ar[r,mapsto] & z_1 + jz_2.
\end{tikzcd}
\end{equation}
is a complex linear isomorphism.

We define $3$ almost complex structures $I,J, K$ on $\HH$, given by
the quaternionic multiplication by $i,j$ and $k$ \emph{on the left}:
$$
I\cdot q = i q, \quad J\cdot q = jq\quad \mbox{ and }\quad K\cdot q  = kq,
$$
and we also denote by $I$, $J$ and $K$ the almost complex structures
deduced on $\CC^2$ using the isomorphism~\eqref{eq:isom}.

From now on $V$ is an affine space  with the underlying  vector
space $\vecV$ identified to $\HH$. Then $\vecV$ is isomorphic to the complex vector
space $\CC^2$ via \eqref{eq:isom}, equiped with its canonical Hermitian 
structure. Furthermore $\vecV$ is endowed with three additional almost complex structure $\bullet =
I, J$  and
$K$, deduced from the one on $\HH$.
The  three almost complex structures are compatible with the metric $g_V$
and their
corresponding symplectic forms
$\hat\omega_\bullet = \hat\omega_I, \hat\omega_J$ and $\hat\omega_K$ are  given by
$$
\hat\omega_I = g_V(I\cdot,\cdot),\quad
\hat\omega_J = g_V(J\cdot,\cdot)\quad \mbox{ and } \quad
\hat\omega_K = g_V(K\cdot,\cdot).
$$

\subsection{Selfduality}
The metric $g_V$ and the orientation induced by $\omega_V$
provide a \emph{Hodge operator}, which is an involution
$\star:\Lambda^2\vecV^*\to\Lambda^2\vecV^*$ defined by 
$$
\gamma_1\wedge \star \gamma_2 = g_V(\gamma_1,\gamma_2)\vol_V,
$$
where $\vol_V = \frac {\omega_V^2}2$.
 We have the well known splitting of $2$-forms 
$$
\Lambda^2\vecV^*= 
\Lambda^{2,+}\vecV^*\oplus
\Lambda^{2,-}\vecV^*
$$
into \emph{selfdual} and \emph{anti-selfdual} $2$-forms, corresponding to the
$\pm1$ eigenvalues of the Hodge $\star$ operator.
The isomorphism between $\vecV$ and $\CC^2$ together with the canoncial
coordinates $z_1 = x_1+iy_1$ and $z_2=x_2+iy_2$, give the fomulas
\begin{align*}
	\hat \omega_I &= dx_1\wedge dy_1 - dx_2\wedge dy_2,\\
	\hat \omega_J &= dx_1 \wedge dx_2+ 
dy_1\wedge dy_2, \\
	\hat \omega_K &= -dx_1\wedge dy_2 - dx_2\wedge dy_1.
\end{align*}
Notice that these forms are quite different from $\omega_V = dx_1\wedge dy_2 + dx_2\wedge
dy_2$. Using the fact that $\star (dx_1\wedge dy_1) =
dx_2\wedge dy_2$, $\star (dx_1\wedge dx_2) = -dy_1\wedge dy_2$ and $\star (dx_1\wedge
dy_2) = dy_1\wedge dx_2$, we observe that the forms $\hat\omega_\bullet$  span the space of anti-selfdual $2$-forms
$\Lambda^{2,-}\vecV^*$, whereas
$\omega_V$ is a selfdual $2$-form. In particular, we have the decomposition
$$
\Lambda^{2,-}\vecV^* =  \RR \hat\omega_I \oplus \RR \hat\omega_J \oplus \RR
\hat\omega_K
$$
and the formulas
$$
\hat \omega_I^2 = \hat \omega_J^2 =\hat \omega_K^2 = -\omega_V^2=
-2\vol_V.
$$
Given a lattice $\Gamma$ of $\vecV$, we obtain a torus $M=V/\Gamma$ of real
dimension~$4$.
The metric $g_V$, the almost complex structures $i, I, J, K$ and the Kähler
forms $\omega_V$, $\hat\omega_\bullet$  descend to the quotient $M$.
We obtain a flat \emph{canonical Kähler structure} $(M,g_M,i,\omega_M)$ and
a \emph{conjugate hyperKähler
structure} $(M,g_M,I,J,K)$ with three Kähler forms denoted
$\hat\omega_\bullet$ as well.
 By construction  
the canonical Kähler form $\omega_M$ is selfdual, whereas the $\hat\omega_\bullet$
are anti-selfdual.

\subsection{Triple moment map}
\label{sec:hkmap}
According Proposition~\ref{prop:strk}, the almost complex structures $I,J$
and $K$ on $M$ induce three almost complex structures $\cI$, $\cJ$ and $\cK$ on
the moduli space $\scrF$. Together with the metric $\cG$, they provide a
hyperKähler structure, with three corresponding Kähler forms on $\scrF$,
 denoted $\Omega_I, \Omega_J$ and $\Omega_K$. We also denote by $\scrR_I$,
 $\scrR_J$ and $\scrR_K$ the three involutions of $\scrF$ induced by the
 almost complex structures.

By Proposition~\ref{prop:presk}, the action of $\TT$ on $\scrF$ preserves
the hyperKähler structure. By Theorem~\ref{theo:ham} the action of $\TT$ is
Hamiltonian with respect to the three symplectic forms and the moment maps are given by
\begin{equation}\label{eq:mmhk}
	\mu_\bullet(F)= -\frac12 g(\scrR_\bullet F,F)
\end{equation}
or, equivalently by Proposition~\eqref{prop:density}
\begin{equation}\label{eq:densities}
	\mu_\bullet(F)= -\frac{(F^*\omega_V) \wedge \hat
	\omega_\bullet}{\vol_M}
\end{equation}
for $\bullet = I, J, K$.
These identities can  be gathered into a single one
$$
(F^*\omega_V)^- = \frac 12\sum_\bullet \mu_\bullet (F)\hat\omega_\bullet
$$
where $\beta^-$ denotes the anti-selfdual component of a differential
$2$-form.
Indeed, the forms $\hat\omega_\bullet$ provide an orthogonal frame for the space of
anti-selfdual forms and they satisfy $|\hat\omega_\bullet|^2=2$.
We check that 
$$
(F^*\omega_V)^-\wedge \hat\omega_\bullet =
F^*\omega_V\wedge\hat\omega_\bullet=-\mu_\bullet(F) \vol_M
$$
and introduce the \emph{hyperKähler moment map}
\begin{align}
	\nonumber \mub:\scrF &\to \LieTT^3\simeq C^\infty(M,\RR)^3\\
	\label{eq:mmb}
	F& \mapsto  (\mu_I(F),\mu_J(F),\mu_K(F)).
\end{align}

\begin{rmk}
	\label{rmk:mutilde}
The forms $\hat\omega_\bullet$ are anti-selfdual and provide an isomorphism
	\begin{align}
		\label{eq:xi}
		\xi :\LieTT^3 & \to \Omega^{2,-}(M)\\
	\nonumber	(\zeta_I,\zeta_J,\zeta_K) &\mapsto \frac 1{\sqrt 2}\sum_\bullet
	\zeta_\bullet\hat\omega_\bullet,
	\end{align}
	which respects the metrics.
By definition
	\begin{equation}
		\label{eq:asd}
		\sqrt 2	(F^*\omega_V)^- =  \xi(\mu_I(F),\mu_J(F),\mu_K(F)) 
	\end{equation}
and it makes sense to interpret the hyperKähler moment map as a map
\begin{align}
	\nonumber \tilde\mub:\scrF &\to \Omega^{2,-}(M)\\
	\label{eq:mmbtilde}
	F& \mapsto  \sqrt 2 (F^*\omega_V)^-
\end{align}
related to $\mub$ by the identity
$$
\tilde\mub =\xi\circ\mub.
$$

\end{rmk}

\begin{prop}\label{lemma:sd}
For every $F\in\scrF$, the following properties are equivalent:
	\begin{enumerate}
		\item $F^*\omega_V$ is selfdual;
		\item $ \mub(F)=0$.
	\end{enumerate}
\end{prop}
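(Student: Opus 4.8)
The plan is to recognize each component moment map $\mu_\bullet(F)$ as, up to sign, the pointwise $g$-inner product of $F^*\omega_V$ with $\hat\omega_\bullet$, and then to invoke the orthogonal splitting $\Lambda^2\vecV^* = \Lambda^{2,+}\vecV^*\oplus\Lambda^{2,-}\vecV^*$. First I would use the defining relation $\gamma_1\wedge*\gamma_2 = g(\gamma_1,\gamma_2)\sigma_M$ of the Hodge operator together with the fact, already established above, that each $\hat\omega_\bullet$ is anti-selfdual, i.e. $*\hat\omega_\bullet = -\hat\omega_\bullet$. This rewrites the numerator in~\eqref{eq:densities} as
$$
(F^*\omega_V)\wedge\hat\omega_\bullet = -(F^*\omega_V)\wedge *\hat\omega_\bullet = -g(F^*\omega_V,\hat\omega_\bullet)\,\sigma_M,
$$
so that pointwise
$$
\mu_\bullet(F) = g(F^*\omega_V,\hat\omega_\bullet),\qquad \bullet = I,J,K.
$$
This is the key simplification: it converts the wedge expressions defining the moment map into linear functionals detecting the components of $F^*\omega_V$.

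Next I would decompose $F^*\omega_V = (F^*\omega_V)^+ + (F^*\omega_V)^-$ into its selfdual and anti-selfdual parts. Since the two eigenspaces of $*$ are $g$-orthogonal and each $\hat\omega_\bullet\in\Lambda^{2,-}\vecV^*$, the selfdual part contributes nothing, giving $\mu_\bullet(F) = g\big((F^*\omega_V)^-,\hat\omega_\bullet\big)$. Then I would use that $\{\hat\omega_I,\hat\omega_J,\hat\omega_K\}$ span the three-dimensional space $\Lambda^{2,-}\vecV^*$ and hence form a basis of it. Consequently $\mub(F)=0$ if and only if $(F^*\omega_V)^-$ is $g$-orthogonal to every element of $\Lambda^{2,-}\vecV^*$; but $(F^*\omega_V)^-$ itself lies in $\Lambda^{2,-}\vecV^*$, so this forces $(F^*\omega_V)^- = 0$, which is precisely the statement that $F^*\omega_V$ is selfdual. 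The converse is immediate: if $F^*\omega_V$ is selfdual then all three inner products vanish and $\mub(F)=0$. Because every step is carried out fibrewise, the equivalence holds at each point of $M$ and therefore globally.

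There is no serious obstacle here, since the statement is essentially linear algebra in each fibre $\Lambda^2\vecV^*$. The only points requiring care are the sign bookkeeping in the Hodge identity, so that the anti-selfduality $*\hat\omega_\bullet = -\hat\omega_\bullet$ is applied with the correct sign, and confirming that the three forms $\hat\omega_\bullet$ constitute a genuine basis of $\Lambda^{2,-}\vecV^*$ rather than merely a spanning set. The latter is already guaranteed by the explicit computation recorded above, namely $\Lambda^{2,-}\vecV^* = \RR\hat\omega_I\oplus\RR\hat\omega_J\oplus\RR\hat\omega_K$ together with $\hat\omega_I^2 = \hat\omega_J^2 = \hat\omega_K^2 = -2\sigma_M$, which shows the three forms are nonzero and mutually $g$-orthogonal.
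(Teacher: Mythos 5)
Your proof is correct and follows essentially the same route as the paper, which disposes of the statement in one line by citing Formula~\eqref{eq:densities} together with the fact that the $\hat\omega_\bullet$ span $\Lambda^{2,-}\vecV^*$; your write-up simply makes explicit the intermediate steps (converting the wedge expression to $g(F^*\omega_V,\hat\omega_\bullet)$ via the Hodge identity, then using orthogonality of the $\pm$ eigenspaces) that the paper treats as immediate.
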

\begin{proof}
	The equivalence is an immediate consequence of
	Formula \eqref{eq:asd}. 
\end{proof}
In the case where $F =\scrD f$, we have an interesting interpretation of the
moment map which is the leitmotif to this paper:
\begin{prop}
	\label{prop:symplecto}
	Let $f\in \scrM$ be a smooth map. 
	 Then the
	following properties are equivalent:
	\begin{enumerate}
		\item $\mub(\scrD f)=0$;
		\item $f^*\omega_M$ is a selfdual harmonic form.
	\end{enumerate}
	In particular, if $f^*[\omega_M]=[\omega_M]$, then $f$ is a
	symplectomorphism if, and only if, $\mub(\scrD f)=0$, which is to
	say:
$$
	\Symp(M,\omega_M)= \{f\in\scrM, f^*[\omega_M]=[\omega_M] \mbox{ and
	} \mub\circ \scrD (f) =0\}.
$$
\end{prop}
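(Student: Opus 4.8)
The plan is to reduce the statement to Proposition~\ref{lemma:sd} through the identity relating the two relevant pullbacks, and then to invoke Hodge theory to account for harmonicity and for the cohomological normalization.

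First I would record the elementary identity $(\scrD f)^*\omega_V = f^*\omega_M$. This is immediate from $\scrD f = \rho\circ f_*$ together with $\rho^*\omega_V = \omega_M$: for tangent vectors $u,v$ one has
$$
(\scrD f)^*\omega_V(u,v) = \omega_V\big(\rho(f_*u),\rho(f_*v)\big) = (\rho^*\omega_V)(f_*u,f_*v) = (f^*\omega_M)(u,v).
$$
Applying Proposition~\ref{lemma:sd} to $F=\scrD f$ then yields at once
$$
\mub(\scrD f)=0 \iff f^*\omega_M \text{ is selfdual}.
$$

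Next I would promote ``selfdual'' to ``selfdual harmonic''. The form $f^*\omega_M$ is always closed, since $d(f^*\omega_M)=f^*(d\omega_M)=0$. On the $4$-dimensional torus the codifferential on $2$-forms is $d^*=-{*}d{*}$, so if $f^*\omega_M$ is selfdual, then ${*}(f^*\omega_M)=f^*\omega_M$ and hence $d^*(f^*\omega_M)=-{*}\,d(f^*\omega_M)=0$. Thus a closed selfdual $2$-form is automatically coclosed, i.e.\ harmonic, and the two formulations of~(2) coincide for the closed form $f^*\omega_M$. This establishes the equivalence (1)~$\Leftrightarrow$~(2).

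For the final assertion I assume $f^*[\omega_M]=[\omega_M]$ and argue both ways. If $f$ is a symplectic map, then $f^*\omega_M=\omega_M$; the form $\omega_M$ descends from $\omega_V$, which is selfdual by the computation of Section~\ref{sec:hk}, and it is harmonic, being parallel on the flat torus, so~(2) holds and hence $\mub(\scrD f)=0$. Conversely, if $\mub(\scrD f)=0$, then $f^*\omega_M$ is harmonic by the equivalence just proved, and it represents the class $f^*[\omega_M]=[\omega_M]$; since $\omega_M$ is itself the harmonic representative of $[\omega_M]$, uniqueness of harmonic representatives in a de Rham class on the compact Riemannian manifold $M$ forces $f^*\omega_M=\omega_M$, so $f$ is a symplectic map. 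The only genuine content beyond bookkeeping lies here, in the converse of the last claim: selfduality alone is insufficient, and the cohomological hypothesis $f^*[\omega_M]=[\omega_M]$ together with Hodge uniqueness is precisely what upgrades the pointwise selfduality condition to the global equality $f^*\omega_M=\omega_M$. This Hodge-theoretic step is the one I expect to be the crux.
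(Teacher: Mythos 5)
Your proof is correct and follows essentially the same route as the paper: reduce to Proposition~\ref{lemma:sd} via the identity $(\scrD f)^*\omega_V=f^*\omega_M$, observe that a closed selfdual $2$-form is automatically harmonic, and finish with Hodge theory. The only cosmetic difference is in the last step, where the paper applies Hodge theory to the exact selfdual difference $f^*\omega_M-\omega_M$ (exact and harmonic, hence zero), while you invoke uniqueness of harmonic representatives of the class $[\omega_M]$ --- these are the same fact stated in two equivalent ways.
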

\begin{proof}
	Assume that $f\in\scrM$ and $\mub(\scrD f)=0$. 
By Proposition~\ref{lemma:sd}, $\mub(\scrD f)=0$ is equivalent to the fact that
	$F^*\omega_V$ is a selfdual form, where $F=\scrD f$. Since
	$F^*\omega_V=f^*\omega_M$, we deduce that $f^*\omega_M$ is selfdual and
	closed. In particular, it is harmonic and the first part of the proposition
	follows.

	If $f^*[\omega_M]=[\omega_M]$, then
	$f^*\omega_M - \omega_M$ is an exact form. 
	The Kähler
	form $\omega_M$ is selfdual.
If $f^*\omega_M$ is selfdual as well, then $f^*\omega_M-\omega_M$ is an
	exact selfdual
	 form, which implies that it vanishes identically.
	  We conclude that $f$ is a
	symplectic map.
\end{proof}
\section{HyperKähler flow}
\label{sec:hkflow}
\subsection{Prescribed cohomology classes and Hodge projector}
The moduli space of $\vecV$-valued differential $1$-forms $\scrF$ contains
the subspace of closed forms:
$$
\scrF_c=\{F\in\scrF, dF=0\}.
$$
Let $\alpha\in H^1(M,\vecV)\setminus 0$ be a fixed  cohomology class. We introduce
the subspace of  closed forms $\scrF_\alpha$  with cohomology class contained
in the line~$\RR\alpha$:
$$
\scrF_\alpha =\{F\in \scrF, dF=0 \mbox{ and }  [F]\in
\RR\alpha\}\subset\scrF_c\subset\scrF^.
$$
It is a standard fact that Hodge theory extends to $\vecV$-valued forms. 
The  formal adjoint of $d$ with respect to the $L^2$-metric
$\cG$ is denoted $d^\star$ and  the Laplacian operator is defined by $\Delta = dd^\star+d^\star d$.
Hodge theory provides an orthogonal projection with respect to the metric~$\cG$
$$
\Pi_c: \scrF \to \scrF_c.
$$
Indeed, any $1$-form $F\in \scrF$ admits an orthogonal decomposition
$$
F = F_{h} + d\beta + d^\star b,
$$
where $F_{h}$ is harmonic, $\beta \in \Omega^0(M,\vecV)$ and $b \in
\Omega^2(M,\vecV)$. 
Then, the projection $\Pi_c:\scrF\to \scrF_c$ onto the closed component
is given by
$$
\Pi_c (F) =  F_{h} + d\beta.
$$
Cohomology classes are represented by a unique harmonic
form. This leads  to an orthogonal decomposition into harmonic components 
$$F_{h}= F_\alpha +
F_\alpha^\perp,$$
where $[F_\alpha]\in \RR\alpha$ and $F^\perp_\alpha$ is orthogonal to the
harmonic representative of $\alpha$. The 
orthogonal projection  
$$
\Pi_\alpha:\scrF\to\scrF_\alpha
$$
is then
defined by
$$
\Pi_\alpha(F)= F_\alpha + d\beta.
$$

\subsection{Energy of the moment map}
The \emph{energy functional} of the hyperKähler moment map
$$
\phi:\scrF \to \RR
$$
is  defined by
\begin{align}
	\label{eq:energy}
	\phi(F)&= \frac 12 \|\mub(F)\|_{L^2}^2\\
	\nonumber &= \frac 12\| \mu_I(F) \|_{L^2}^2 + \frac
	12 \| \mu_J(F) \|_ {L^2}^2+
	\frac 12\|\mu_K(F)\|^2_{L^2} .
\end{align}
The functional $\phi$ is non negative and its vanishing locus
agrees with the zero set the hyperKähler moment map
$$
\mub^{-1}(0)=\phi^{-1}(0).
$$
Furthermore, the vanishing set of $\phi$ agrees with the set of critical
values  according to the following
proposition:
\begin{prop}
	\label{prop:crit}
The critical points of $\phi:\scrF\to \RR$ are the zeroes of the
	hyperKähler moment map $\mub$. Similarly, the critical points of
	the restricted functional $\phi:\scrF_\alpha\to \RR$ are the zeroes
	of $\mub$ contained in $\scrF_\alpha$.
\end{prop}
The proof of Proposition~\ref{prop:crit}
 follows from the elementary computations
 carried out in the rest of this section.
The differential of $\phi$ is readily computed:
\begin{align}
	\label{eq:diff0}
	D\phi|_F\cdot \dot F = &  
\ipp{D\mu_I|_F\cdot \dot F, \mu_I(F) }\\ 
\nonumber	+&\ipp{D\mu_J|_F\cdot \dot F, \mu_J(F) }\\
\nonumber	+& \ipp{D\mu_K|_F\cdot \dot F, \mu_K(F)}
\end{align}
where $\ipp{\cdot,\cdot}$ denotes the $L^2$ inner product on $\LieTT$.
By Formula~\eqref{eq:diffmu},
$$D\mu_I|_F\cdot \dot F = - g(\scrR_I F,\dot F),$$
hence
$$
\mu_I(F) D\mu_I|_F\cdot \dot F = -g(\mu_I(F)\scrR_IF,\dot F)
$$
and more generally
$$
\mu_\bullet(F) D\mu_\bullet|_F\cdot \dot F = -g(\mu_\bullet(F)\scrR_\bullet
F,\dot F).
$$
Integrating the above formula, we have
\begin{align}
	\ipp{D\mu_\bullet|_F\cdot \dot F,\mu_\bullet (F)} & = -\cG(\mu_\bullet
	(F)\scrR_\bullet F,\dot F) \label{eq:diff1}\\
	& =
-\Omega_\bullet (\mu_\bullet(F)  i F,\dot F) \\
	& =-
\iota_{X_{\mu_\bullet(F)}(F)}\Omega_\bullet(\dot F).
\end{align}
where $X_{\mu_\bullet(F)}$ is the vector field corresponding to the infinitesimal
action of~$\mu_\bullet(F)\in \LieTT$ on~$\scrF$.
\begin{lemma}
	\label{lemma:formphi}
	For every $F, \dot F\in\scrF$, we have
	\begin{equation}\label{eq:diffphi}
		D\phi|_F\cdot \dot F = \ipp{\sum_\bullet W_\bullet(F), \dot
		F}
	\end{equation}
where 
\begin{equation}\label{eq:w}
W_\bullet(F) = -\mu_\bullet(F)\scrR_\bullet F .
\end{equation}
In particular

	\begin{equation}
		\label{eq:key}
			D\phi|_F\cdot  F = 4\phi(F).
	\end{equation}
\end{lemma}
\begin{proof}
	Formulas~\eqref{eq:diffphi} and \eqref{eq:w} are deduced from~\eqref{eq:diff0} and~\eqref{eq:diff1}. The
	identity~\eqref{eq:key} is a simple consequence: 
	$$
	D\phi|_F\cdot F = -\sum_\bullet\cG(\mu_\bullet(F)\scrR_\bullet F,F)
	= -\sum_\bullet \int g(\mu_\bullet(F)\scrR_\bullet F,F)\vol_M
	$$
	Since $g(\mu_\bullet(F)\scrR_\bullet F,F)=
	\mu_\bullet(F)g(\scrR_\bullet F,F)=-2\mu_\bullet(F)^2$, we deduce
	that $D\phi|_F\cdot F= 2\sum _\bullet \|\mu_\bullet(F)\|^2 =
	4\phi(F)$.
	\end{proof}
\begin{proof}[Proof of Proposition~\ref{prop:crit}]
	If $F\in\scrF$ satisfies $\mub(F)=0$, then $\mu_\bullet(F)=0$ hence $W_\bullet(F)=0$ and
	$D\phi|_F=0$. 
	Conversely, if $F$ is a critical point of $\phi:\scrF\to \RR$, then
	$4\phi(F)= D\phi|_F\cdot F= 0$, by Formula~\eqref{eq:key}, which
	implies $\mub(F)=0$. 

	If $F$ is a critical point of the restriction
	$\phi:\scrF_\alpha\to\RR$, then $D\phi|_F\cdot F$ vanishes and the
	same proof shows that $\mub(F)=0$.
\end{proof}

As a direct consequence of
	Formula~\eqref{eq:diffphi} we obtain the following result:
\begin{cor}
	\label{cor:gradient}
The gradient of $\phi:\scrF\to \RR$ at $F$ is given by the formula
$$
	\nabla \phi(F)= \sum W_\bullet(F).
$$
The gradient $\nabla^\alpha\phi$ of the restriction $\phi:\scrF_\alpha\to\RR$ at $F\in \scrF_\alpha$ is given by the
	projection 
	$$\nabla^\alpha\phi(F)=\Pi_\alpha(\nabla\phi(F)).
	$$
	For every $F\in\scrF$, we have 
	$$\ipp{\nabla\phi(F),F}=4\phi(F)$$
	and, similarly, for every  $F\in\scrF_\alpha$
	$$\ipp{\nabla^\alpha\phi(F),F}=\ipp{\Pi_\alpha \nabla\phi(F),F}=4\phi(F).$$
\end{cor}

\subsection{Modified moment map flow}
The downward gradient flow of the moment map energy functional
$\phi:\scrF\to\RR$  is  the classical \emph{hyperKähler moment map flow} used in many
gauge theoretic settings. However the subspace
$\scrF_\alpha \subset \scrF$ is not invariant under the gauge group and the
flow may not preserve this subspace. This is problematic, since we are mostly
interested in closed differential, with Proposition~\ref{prop:symplecto} in mind.
We get aroung this issue  by considering the
flow of the restricted functional $\phi:\scrF_\alpha\to \RR$, defined
 by
\begin{equation}\label{eq:flow1}
	\boxed{\frac{\del F}{\del t} = - \Pi_\alpha\nabla\phi(F).}
\end{equation}
The above flow is called the \emph{modified hyperKähler moment map flow}.

\begin{prop}
	\label{prop:zeroes}
	The following conditions are equivalent for $F\in\scrF_\alpha$:
	\begin{enumerate}
		\item $F$ is a zero of $\mub$
		\item $F$ is a zero of $\phi$
		\item $F$ is a critical point of the functional
			$\phi:\scrF_\alpha\to\RR$
		\item $F$ is a fixed point of the modified moment map
			flow.
	\end{enumerate}
\end{prop}
\begin{proof}
	The proof is obvious since fixed points are by definition the
	critical point of the restricted functional $\phi:\scrF_\alpha\to
	\RR$, which were identified at Proposition~\ref{prop:crit}.
\end{proof}
An essential feature of the modified moment map flow is the decay property
of the $L^2$ norm:
\begin{prop}
	\label{prop:decay}
	Let $F_t\in\scrF_\alpha$ be a solution of the  modified  moment
	map  flow, for $t$ in some interval $I$. Then 
	$$
	\frac\del{\del t}\|F_t\|^2_{L^2} = -8\phi(F_t),
	$$
	for every $t\in I$.
	In particular
	$t\mapsto \|F_t\|_{L^2}$ is a non increasing function on $I$.
\end{prop}
\begin{proof}
We compute
$$
	\frac{\del\|F_t\|^2}{\del t} = 2\ipp{ \frac {\del F_t}{\del
	t},F_t} =
	-2\ipp{\Pi_\alpha \nabla\phi(F_t),F_t}=-2D\phi|_{F_t}\cdot F_t.
$$
	an the result follows by Equation~\eqref{eq:key}.
\end{proof}
\begin{rmk}
	In the case of an ordinary differential equation,
	Proposition~\ref{prop:decay} would guaranty the long time existence
	of the flow. An analogue of this proposition holds indeed in the
	polyhedral setting (cf. Proposition~\ref{prop:formpol}) and
	insures the long time existence of the flow
	 (cf. Corollary~\ref{cor:limit}).
\end{rmk}
\subsection{Short time existence}
The modified hyperKähler moment map flow has been considered so far
from a purely formal perspective. We are going to show that the
Cauchy-Lipschitz theorem  applies, once suitable Hölder spaces have been
introduced.

We consider the $C^{k,\nu}$-Hölder norms, denoted $\|\cdot\|_{k,\nu}$, defined 
via the metrics $g_M$ and $g$ on the spaces of functions and tensors on the $4$-torus~$M$, where $\nu\in (0,1)$ is the Hölder
regularity exponent and $k$ is the number of derivatives controlled by
the norm.  The spaces of smooth functions and differential forms can be
completed into Banach spaces with respect to these Hölder norms. 
For example, we
denote by $\scrF^{k,\nu}$ the completion of the  space
of smooth differential forms $\scrF$  with respect to the $C^{k,\nu}$-Hölder
norm. 

If $k\geq 2$, the exterior derivative $d$, its adjoint $d^\star$ and  the
Laplacian operator $\Delta=dd^\star+d^\star d$ are 
defined  on $\scrF^{k,\nu}$. The  Hodge decomposition theory holds and, in
particular, we may 
consider the subspace of closed forms
$\scrF_c ^{k,\nu}$. We can also consider the subspace of closed forms with cohomology class in
$\RR\alpha$, denoted $\scrF_\alpha^{k,\nu}$ and the orthogonal projection
$\Pi_\alpha:
\scrF^{k,\nu}\to \scrF_\alpha^{k,\nu}$ (cf. Lemma~\ref{lemma:hodge}), which extends the projection
defined at \S\ref{sec:cohom} in the smooth settings. 

\begin{theo}
	\label{theo:cauchy}
	Let $k\geq 2$ and $\nu\in(0,1)$ be some Hölder exponents. For every
	$\alpha\in H^1(M,\vecV)\setminus 0$ and
	$F\in\scrF_\alpha^{k,\nu}$, 
	there exists  $\epsilon >0$ and a unique differentiable map
	\begin{align*}
		[-\epsilon,\epsilon]& \to \scrF_\alpha^{k,\nu} \\
		t&\mapsto F_t
	\end{align*}
	such that 
	\begin{itemize}
	\item  We have the initial condition $F_0=F$ and
	\item the map is a 
		solution of the modified hyperKähler  moment map flow. 
	\end{itemize}
\end{theo}
Theorem~\ref{theo:cauchy} follows immediately from the
Cauchy-Lipschitz theorem.
 A key step to prove the Lipschitz condition is the following classical result of
Hodge theory:
	\begin{lemma}\label{lemma:hodge}
		For $k\geq 2$, the projection
		$\Pi_\alpha:\scrF^{k,\nu}\to\scrF_\alpha^{k,\nu}$ defines a continuous map 
		with respect to the $C^{k,\nu}$-Hölder norm. In
	other words, there exists a constant $C>0$, such that for every
		$F\in\scrF^{k,\nu}$
$$
	\|\Pi_\alpha F\|_{k,\nu} \leq C\|F\|_{k,\nu}.
$$
	\end{lemma}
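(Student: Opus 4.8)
The plan is to prove that $\Pi_\alpha$ is a \emph{bounded} operator for the Hölder norm, i.e. $\|\Pi_\alpha F\|_{k,\beta}\leq C\|F\|_{k,\beta}$, which is exactly the continuity asserted. The starting point is the observation that, since $M=V/\Gamma$ is \emph{flat} and its tangent bundle is globally trivialised by $\rho$, the Hodge Laplacian $\Delta=dd^*+d^*d$ acting on $\vecV$-valued forms is simply the scalar Hodge Laplacian acting diagonally on each of the finitely many scalar components in the trivialisation. Consequently every analytic estimate below reduces to the classical constant-coefficient scalar case on the flat torus, and the $\vecV$-valued setting introduces no new difficulty.

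First I would unwind the definition of $\Pi_\alpha$ into operators whose boundedness can be treated one at a time. Writing $H:\scrF\to\scrF$ for the $L^2$-orthogonal projection onto the finite-dimensional space $\cH$ of harmonic (equivalently, parallel) $\vecV$-valued $1$-forms, $P_\alpha:\cH\to\cH$ for the projection onto the line $\RR\alpha\subset H^1(M,\vecV)\simeq\cH$, and $\Green$ for the Green operator of $\Delta$ on $1$-forms, I claim that
$$\Pi_\alpha F = P_\alpha H F + dd^*\Green F .$$
Indeed $F-HF=\Delta\Green F = d(d^*\Green F)+d^*(d\Green F)$ splits $F-HF$ into its exact and coexact parts, so the exact term $d\phi$ in the Hodge decomposition $F=F_h+d\phi+d^*\gamma$ equals $dd^*\Green F$ (with $\phi=d^*\Green F$), and this is precisely the term kept by $\Pi_\alpha$ together with the $\RR\alpha$-component $P_\alpha HF$ of the harmonic part.

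Next I would estimate the two summands separately. For the harmonic term, $HF=\sum_i\langle F,h_i\rangle\, h_i$ for a fixed $L^2$-orthonormal basis $(h_i)$ of $\cH$ made of parallel, hence $C^\infty$, forms; since $|\langle F,h_i\rangle|\leq \|F\|_{L^2}\|h_i\|_{L^2}\leq C\|F\|_{k,\beta}$ on the compact manifold $M$ and each $h_i$ is a fixed smooth form, we get $\|HF\|_{k,\beta}\leq C\|F\|_{k,\beta}$, and the finite-dimensional projection $P_\alpha$ is trivially bounded. For the exact term, the key input is the \emph{Schauder estimate}: the Green operator $\Green$ is bounded $C^{k,\beta}\to C^{k+2,\beta}$, uniformly because $g_M$ is flat. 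Composing with the first-order operators $d$ and $d^*$ then gives $\|dd^*\Green F\|_{k,\beta}\leq C\|\Green F\|_{k+2,\beta}\leq C\|F\|_{k,\beta}$. Adding the two estimates yields the claimed continuity of $\Pi_\alpha$.

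The only genuine point requiring care is establishing (or citing) the two elliptic facts — Schauder regularity for $\Green$ on Hölder spaces and the smoothness of harmonic forms — in a form valid for $\vecV$-valued forms; as noted, the flatness of $M$ and the global trivialisation of $TM$ reduce this to the scalar constant-coefficient Laplacian, where the estimates are classical and on the torus can even be read off from Fourier series. A secondary subtlety is verifying that the Green-operator formula genuinely reproduces the Hodge decomposition, i.e. that $dd^*\Green F$ is the exact part; this follows from the orthogonality of the decomposition together with the fact that $\Green$ commutes with $d$ and $d^*$, which holds since $\Green=(\Delta|_{\cH^\perp})^{-1}$ and $\Delta$ commutes with both operators.
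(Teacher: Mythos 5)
Your proof is correct and takes essentially the same route as the paper: the identical decomposition $\Pi_\alpha F = (\text{finite-dimensional projection of the harmonic part}) + dd^*G$ with $G=\Green F$ orthogonal to harmonic forms, the harmonic term controlled by finite-dimensionality, and the exact term controlled by Schauder theory --- your citation of the boundedness of $\Green:C^{k,\beta}\to C^{k+2,\beta}$ is precisely what the paper derives by hand via iterated elliptic estimates using $G\perp\ker\Delta$. Note that you prove $\|\Pi_\alpha F\|_{k,\beta}\leq C\|F\|_{k,\beta}$, which is indeed what continuity means; the ``$\geq$'' in the paper's statement is evidently a typo.
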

	\begin{rmk}
		\label{rmk:hodge}
		More generally,  Hodge projectors are
		all continuous provided suitable Hölder spaces are
		introduced. For example, the projection of differential
		$p$-forms onto their closed, co-closed or harmonic components
		are continuous as well. However we include a proof of the
		above theorem for clarity.
	\end{rmk}
	\begin{proof}[Proof of Lemma~\ref{lemma:hodge}]
		For $k\geq 2$, the  classical Hodge theory shows that
		every $1$-form
		$F\in\scrF ^{k,\nu}$ admits a $\cG$-orthogonal decomposition
$$
		F= F_{h} + \Delta G,
$$
		where $G\in \scrF^{k+2,\nu}$ is a $1$-form orthogonal to harmonic forms and
		$F_{h}$ is the
	harmonic component of $F$.
	A $C^{k,\nu}$-estimate on $F$
provides a	 $C^{k-2,\nu}$-estimate for $\Delta F$.
		Taking the Laplacian of both sides of the above identity
		gives $\Delta F = \Delta^2 G$, since $F_{h}$ is harmonic.
	The $C^{k,\nu}$-norm of
	$F$ controls the $C^{k-2,\nu}$ norm of $\Delta^2 G$. The operator
	$\Delta$ is selfajoint, hence $\Delta G$
	is orthogonal to the kernel of $\Delta$. The Schauder
		estimates for the elliptic operator $\Delta$ provide a control on the
		$C^{k,\nu}$-norm
		of $\Delta G$. Since $G$ was chosen
	orthogonal to harmonic forms, we deduce a $C^{k+2,\nu}$ control on
		$G$, by the Schauder estimates.
	In conclusion, there exists a constant $c_1>0$, independent of $F$, such that
$$
	\|G\|_{{k+2,\nu}}\leq c_1\|F\|_{{k,\nu}}.
$$
In particular 
		\begin{equation}
			\label{eq:ell0}
	\|dd^\star G\|_{{k,\nu}}\leq c_2\|F\|_{{k,\nu}},
		\end{equation}
for some constant $c_2>0$, independent of $F$.

		It turns out that there
		exists a universal constant $c_3>0$, such that
		\begin{equation}
			\label{eq:ell1}
			\|F_{h}\|_{L^2}\leq c_3\|F\|_{{k,\nu}}.
		\end{equation}
		The proof goes by contradiction.
If this is not true, we can find a sequence $F_j$ of differential
		forms such that 
		$$
		\lim_{j\to\infty} \|F_j\|_{{k,\nu}} =0 \quad \mbox{ and
		}\quad
		\|F_{j,h}\|_{L^2}=1.
$$
Since the space of harmonic form is finite dimensional, we may assume, up
		to extracting a subsequence, that $F_{j,h}$ converges to
		a harmonic form $F_{h}$ with 
		$\|F_{h}\|_{L^2}=1$. In particular, $F_h$ is non
		vanishing.
Computing the $L^2$ inner product, we have
$$
		\ipp{F_j, F_{h}}= \ipp{F_{j,h},F_{h}}.
$$
In particular 
$$\lim \ipp{F_j,F_{h}}= \|F_{h}\|^2_{L^2}=1.$$
By the Cauchy-Schwartz inequality, $ \ipp{F_j,F_{h}}\leq
\|F_j\|_{L^2}\|F_h\|_{L^2}$,
but $\|F_j\|_{L^2}\leq \|F_j\|_{k,\nu}\vol(M)$ hence $\lim \|F_j\|_{L^2}=0$.
We conclude that  
$$\lim \ipp{F_j,F_{h}}= 0$$
 which is a
contradiction and estimate \eqref{eq:ell1} is proved.

The orthogonal decomposition into harmonic components $F_{h}= F_\alpha + F_\alpha^\perp$
gives the estimate
$$
\|F_ \alpha\|_{L^2}\leq \|F_h\|_{L_2}.
$$
Together with estimate \eqref{eq:ell1}, we deduce that 
\begin{equation}
	\label{eq:ell2}
			\|F_{\alpha}\|_{L^2}\leq c_3\|F\|_{{k,\nu}}.
\end{equation}

All norms are equivalent on a finite dimensional space, in particular on
the space of harmonic forms. Therefore, there
		exists a universal constant $c_4>0$ such that for every
		harmonic form $\beta_h$
$$
		c_4 \|\beta_h\|_{{k,\nu}}\leq \|\beta_h\|_{L^2}.
$$
Together with estimate \eqref{eq:ell2}, we deduce that 
		\begin{equation}
			\label{eq:ell3}
			 \|F_{\alpha}\|_{k,\nu}\leq c_5\|F\|_{{k,\nu}}
		\end{equation}
		where $c_5= \frac{c_3}{c_4}$.

Finally $F= F_\alpha +F_\alpha^\perp +dd^\star G+ d^\star dG$, and
$\Pi_\alpha F = F_\alpha + dd^\star G$ so that
$$
\|\Pi_\alpha F\|_{k,\nu}\leq \|F_\alpha\|_{k,\nu}+  \|dd^\star
G\|_{k,\nu}$$
and the lemma follows form estimates \eqref{eq:ell3} and \eqref{eq:ell0}.
	\end{proof}

\begin{proof}[Proof of Theorem~\ref{theo:cauchy}]
	The map of $F\mapsto \nabla\phi(F)$ is  polynomial of order~$3$ 
	with respect to the coefficients of
	$F$ by Corollary~\ref{cor:gradient}. 
	Therefore, the map $F\mapsto \nabla\phi(F)$ is 
	locally Lipschitzian  with respect to the
	$C^{k,\nu}$-Hölder
	norm. The projector $\Pi_\alpha$ is continuous by
	Lemma~\ref{lemma:hodge}, hence the composition $F\mapsto \Pi_\alpha\nabla\phi(F)$ is locally
	Lipschitzian as well. The standard proof of existence of solution
	of an evolution equation rests on the existence of fixed points of a
	functional
	$$
	\Upsilon : \cB\to\cB
	$$
	where 
	 $\cB$  is the Banach space of continuous maps 
	 $$C^0([-\epsilon,\epsilon],
	\scrF_\alpha^{k,\nu}),$$ 
	endowed with the norm
$$
\|F_t\|_\infty = \sup_{t\in [-\epsilon,\epsilon]}\|F_t\|_{k,\nu}.
$$
More precisely, $\Upsilon$ is defined for each  $F\in\cB$,
	by
$$
	\Upsilon(F)_t = F_0 +\int_0^t\Pi_\alpha\nabla\phi(F_s)ds
$$
	For $\epsilon>0$  sufficiently small, the functional $\Upsilon$ is
	a locally $\frac12$-contractant map of Banach space for $(\cB,\|\cdot
	\|_\infty)$
and  the Banach fixed point theorem applies.
\end{proof}

\section{Renormalized flow and real blowup}
\label{sec:renflow}
\subsection{Homotopy of the symplectomorphism group}
\label{sec:motivret}
We give a sketch of a potential application of the modified moment map
flow,
for investigating the homotopy type of $\Symp_\alpha(M,\omega_M)$.  Let $B^{n+1}$ be a
closed Euclidean ball of dimension $n+1$ and $S^n$ be its boundary sphere.
Let 
$$h:S^n\to\Symp_\alpha(M,\omega_M)$$
be a continuous map, with respect to
the $C^1$-topology on $\scrM_\alpha$. We denote  $h(\tau)$ by $h_\tau$
as usual, for $\tau\in S^n$.
We choose a marked point $x_0\in M$ and assume, for simplicity, that
$h_\tau(x_0)=x_0$ for every $\tau\in S^n$. 

The map $H:S^n \to \scrF_\alpha$, defined by $H_\tau=\scrD h_\tau$, is 
continuous  with respect to the $C^0$-topology. The cohomology  class $\alpha$
is can be undertood as an affine subspace of $\scrF_\alpha$. In particular this space is contractible,
 hence $H$ extends as a continuous homotopy $H:B^{n+1}\to
\alpha$.
Since $\alpha$ is an integral cohomology class, there exists a unique  family of maps 
$h:B^{n+1}\to \scrM_\alpha$ which extend the map
$h:S^n\to\Symp_\alpha(M,\omega_M)$, with the property that
$h_\tau(x_0)=x_0$ and $\scrD h_\tau =H_\tau$ for every $\tau\in B^{n+1}$.
This shows that the initial homotopy $h:S^n\to \Symp_\alpha(M,\omega_M)$
is trivial in $\scrM_\alpha$. We would like to know whether it is also trivial
in $\Symp_\alpha(M,\omega_M)$. 

The idea is to try to push back the
homotopy in the symplectic group via the modified moment map flow.
For every $\tau\in S^n$, the point $H_\tau$ is a fixed point of the
modified moment map flow, by assumption. We now make some extremely strong
hypothesis: 
\begin{enumerate}
	\item[(H1)] We assume that, for every $\tau\in B^ {n+1}$, the
modified moment map flow $F^\tau_t$, with initial condition
		$F^\tau_0=H_\tau$, exists
for every $t\geq 0$ and converges toward a limit $\tilde H_\tau$ as $t\to
+\infty$.
		\item[(H2)] We suppose that the limit $\tilde H_\tau$ belongs to the vanishing set of $\phi$ 
and depends continuously on the initial
condition. 
\item[(H3)] We assume that the limit $\tilde H_\tau$ is different from $F=0$, for every $\tau\in
	B^{n+1}$.
\end{enumerate}
Properties $(H1)$ and $(H2)$ are conjectural 
in the smooth case. However they  hold in the polyhedral context, as proved by
Theorem~\ref{theo:duistermaat}.
Under these hypothesis, we have a deformed $H$ into continuous map 
$ \tilde H: B^{n+1}\to
\scrF_\tau$, wich agrees with $H$ along $S^n$ and takes values in the
vanishing set of $\phi$.
If the hypothesis $(3)$ holds as well, then $\tilde H_\tau$ belongs to
$\scrF_\alpha\setminus 0$. By Lemma~\ref{lemma:nonvanish} we can deduce
that the cohomology
class $[\tilde H_\tau]$ is non zero and we can write 
$$
[\tilde H_\tau]= \kappa(\tau)\alpha
$$
for some non vanishing continuous function $\kappa:B^{n+1}\to \RR$.
Furthermore, $\kappa =1$ along the boundary $S^n$. We consider the rescaled
family $\hat H_\tau = \kappa^{-1}(\tau)\tilde H_\tau$. By definition $[\hat
H_\tau]=\alpha$
 and the exists a unique family of
maps $f_\tau\in\scrM_\alpha$ such that $\scrD f_\tau =\hat H_\tau$ and
$f_\tau(x_0)=x_0$. By constructions the maps $f_\tau$ belong to
$\Symp_\alpha(M,\omega_M)$ by Theorem~\ref{theo:eqmu} and $h_\tau=f_\tau$ for every $\tau\in S^n$. 
We conclude that the map $h:S^n\to\Symp_\alpha(M,\omega_M)$ is
homotopically trivial in $\Symp_\alpha(M,\omega_M)$.

If hypothesis $(H1-H2-H3)$ hold for every map $h$ as above, it would readily follow that
the evaluation map $ev:\Symp_\alpha(M,\omega_M)\to M$ at $x_0$ is a homotopy
equivalence.
However this is too good to be true. In the polyhedral setting, 
we prove at Theorem~\ref{theo:polflow} 
that there always exists non trivial solutions of the polyhedral modified
moment map flow converging toward $F=0$. Hence, hypothesis $(H3)$ is
generally false in the polyhedral context and we shall see that this is directly
related to the existence of proper solitons. We are not able to prove
a similar result in the smooth setting, but it is likely that the same
phenomenon arise.
The conclusion of this discussion is that 
 proper solitons should play a role in the topological description of
the space of  symplectic maps of the torus and contribute, somehow, to its
complexity.

\subsection{Blownup moduli space}
For any cohomology class $\alpha\in H^1(M,\vecV)\setminus 0$, 
the sphere of radius $r>0$ in $\scrF_\alpha$ is defined by
$$
\SS_\alpha(r)=\{F\in \scrF_\alpha, \|F\|_{L^2}=r\}
$$
and the sphere is denoted $\SS_\alpha$ in the case $r=1$. The
\emph{real blowup} of the moduli space
$$p:\widehat{\scrF_\alpha}\to\scrF_\alpha,
$$
is defined  by 
$\widehat{\scrF_\alpha} = [0,+\infty ) \times \SS_\alpha$,
where the \emph{blowdown map}
	is given by $p(r,F)=rF$.
The real blowup contains an \emph{exceptional divisor}
$$\scrE=p^{-1}(0)=\{0\}\times\SS_\alpha\subset \widehat\scrF_\alpha,$$ 
such that the restriction of blowdown map  $p:\widehat
{\scrF}_\alpha\setminus \scrE\to \scrF_\alpha\setminus \{0\}$ is  bijective. 
One can think of the real blowup as a system of spherical coordinates
 for~$\scrF_\alpha$,
with an extra boundary component, corresponding to $r=0$.

\subsection{Solitons}
\label{sec:solitons}
A  critical
 points $F\in\SS_\alpha(r)$ of the functional $\phi:\scrF_\alpha\to \RR$ is also a critical point of the restricted
functional $\phi: {\SS_\alpha}(r)\to \RR$, but the converse is not
necessarily true. 
A differential form  $F\in\SS_\alpha(r)$  is a critical points of the restricted functional
$\phi:\SS_\alpha(r)\to\RR$ if, and only if, the gradient  of
$\phi:\scrF_\alpha\to \RR$ at $F$ is
orthogonal to the  sphere $\SS_\alpha(r)$. This property is equivalent to
have a radial gradient
$\Pi_\alpha\nabla\phi(F)=\kappa F$,
for some constant $\kappa\in\RR$. However by
Corollary~\ref{cor:gradient}, the identity $\ipp{\Pi_\alpha\nabla\phi(F),F}=
4\phi(F)$ implies $ \kappa r^2=
4\phi(F)$.  
We deduce that $F$ satisfies the following
equation, called the \emph{soliton equation}:
\begin{equation}
	\label{eq:soliton}
	\boxed{	\|F\|^2_{L^2} \Pi_\alpha\nabla\phi(F)=4\phi( F) F.}
\end{equation}

	\begin{dfn}
		\label{dfn:soliton}
		A  solution $F\in\scrF_\alpha$ of
		Equation~\eqref{eq:soliton} is called a soliton. Then
		either
		$\phi(F)\neq 0$, and we say that $F$ is a proper soliton,
		of $\phi(F)=0$ and  $F$ is called a non proper soliton..
\end{dfn}
 \begin{rmk}
The soliton equation~\eqref{eq:soliton} is \emph{real
homogeneous}.
	 Therefore, the space of solitons is a conical subspace  of $\scrF_\alpha$.
 \end{rmk}
By definiton, we have the following property:
\begin{lemma}
	A differential form  $F\in\scrF_\alpha$
	 is a soliton if, and only if, either $F=0$ or $F$ is a critical point of the restricted
	functional $\phi:\SS_\alpha(r)\to\RR$, where $r=\|F\|_{L^2}$.
\end{lemma}

Following the ideas of Atiyah-Bott~\cite{AB}, Donaldson~\cite{D} and
Kirwan~\cite{Kir,KFM} in the finite dimensional case, the
functional $\phi:\SS_\alpha\to\RR$ should be regarded as a Morse-Bott function. By definition, the
critical set of the restriced functional is the space of solitons in
$\SS_\alpha$. Furthermore, Morse-Bott homology theory provides important
topological information about the critical set.
Finally, the subspace of non proper solitons in $\SS_\alpha$ 
is directly related to 
 the topology of the the symplectic group as we are going to see.

\begin{lemma}
	\label{lemma:nonvanish}
If $F\in\scrF_\alpha$ is a non proper soliton with vanishing cohomology
	 class, then $F=0$.
\end{lemma}
\begin{proof}
Let $F$ be a non proper soliton with vanishing cohomology class. By
	definition $\phi(F)=0$, which is equivalent to
	$\tilde\mub(F)=(F^*\omega_V)^-=0$, which implies that $F^*\omega_V$ is selfdual. 
	Since $[F]=0$, there exists a map $f:M\to
	M$, homotopic to a constant map, such that $F=\scrD f$. Using the
	identity $F^*\omega_V= f^*\omega_M$, we conclude that $f^*\omega_M$ is
	selfdual. On the other hand $[f^*\omega_M]=0$,
	 since $f$ is homotopic to a constant map. A selfdual exact form must vanish, hence
	$f^*\omega_M=0$, which implies $f_*=0$ by the non degeneracy of
	$\omega_M$. Finally, $f$ is  a constant map, so that $\scrD f= F= 0$.
\end{proof}

\begin{cor}\label{cor:nonproper}
	The cone spanned by the action of $\RR^*$ and the image of
	$\Symp_\alpha(M,\omega_M)$ by $\scrD$
	agrees with the space of non proper solitons  in
	$\scrF_\alpha\setminus 0$.
	Equivalently, the map
	\begin{align*}
		\varpi:\Symp_\alpha(M,\omega_M)&\to \SS_\alpha\\
		f &\mapsto \frac{\scrD f}{\|f_*\|_{L^2}}
	\end{align*}
	induces a bijection between $\Symp_\alpha(M,\omega_M)/\vecV$ and
	the space of non proper solitons in $\SS_\alpha$, modulo the
	antipodal map of $\SS_\alpha$.
\end{cor}
\begin{rmk}
	The Corollary is stated informally, but it is possible to show that 
	$\varpi$ induces a diffeomorphism between moduli spaces, 
	 once suitable Hölder spaces have been introduced.
\end{rmk}
\begin{proof}
	Given $f\in\Symp_\alpha(M,\omega_M)$, we have $\mub(\scrD f)=0$
	hence $\phi(\scrD f)=0$, hence $\scrD f$ is a non proper soliton. 
	The fact that $f$ is a diffeomorphism implies that $\scrD f\neq 0$.

Conversely, let $G$ be a non
	proper soliton in $\scrF_\alpha\setminus 0$. By Lemma~\ref{lemma:nonvanish}, the
	cohomology class of $G$ is non zero. Hence there exist
	$\kappa\in\RR\setminus 0$, such that $\kappa [G]=\alpha$. In
	particular $F=\kappa G$ has integral cohomology class $\alpha$. It
	follows that there exists $f\in\scrM_\alpha$ such that $F=\scrD f$.
	By construction $G$ belongs to the ray spanned by $\scrD f$. The fact that $\phi(G)=0$
	implies $\phi(F)=0$, which shows that
	$f\in\Symp_\alpha(M,\omega_M)$. 
The second statement is an immediate consequence of the first.
\end{proof}
\subsection{Solitons and flow lines}
A soliton provides a natural solution of the modified moment map flow,
according to the following lemma.
\begin{lemma}
	\label{lemma:dicho}
	Let $G\in\scrF_\alpha \setminus 0$ be a soliton and $r:I\to
	[0,+\infty)$  be a 
	solution, of the ordinary differential equation
	$$
	\|G\|^2_{L^2}\frac{d r}{dt}= -4r^3\phi(G)
	$$
	defined on some interval $I$.
	 Explicitly, either
	 \begin{enumerate}
		 \item $G$ is not proper, then
	 $r(t)=r_0$ with $I=\RR$ for some constant $r_0\geq 0$.
 \item or $G$ is proper and
	$$r(t)=\frac{1}{\sqrt{8(t-t_0)\phi(G)}}
	$$ 
	for some constant $t_0\in\RR$ with $I=(t_0,+\infty)$.
	 \end{enumerate}
			 Then 
	$$F_t=p(r(t),G)=r(t)G\in\scrF_\alpha$$
	is a solution of the modified moment map flow with the following
	dichotomy: 
	\begin{enumerate}
		\item If $G$ is not proper, then $F_t$ is a
	stationary solution of the flow.
\item If $G$ is proper, then $F_t$ is not stationary and $F_t$
	converges toward~$0\in\scrF_\alpha$.
	\end{enumerate}
\end{lemma}
\begin{proof}
	Put $F_t =r(t)G$, where $G$ and $r$ satisfy the assumptions of
	the lemma.
	By definition 
	$$\frac{\del}{\del t} F_t = \frac{dr}{dt}
	G=-4\|G\|_{L^2}^{-2}r^3\phi(G)G.$$
	Since $G\in \SS_\alpha$ is a soliton, then
	$\|G\|_{L^2}^2\Pi_\alpha\nabla\phi(G)=4\phi(G)G$, hence
	$\frac{\del}{\del t}F_t= -r^3\Pi\nabla\phi(G) =
	-\Pi_\alpha\nabla\phi(F_t)$, which shows that $F_t$ is a flow line of the
	modified moment map.
\end{proof}

\subsection{Blownup flow}
In this section, we show that the modified moment map flow
admits a natural lift to the real blowup $\widehat{\scrF_\alpha}$.

From now on,  $F_t\in\scrF_\alpha$ is assumed to be a smooth solution of the modified moment map
flow, defined for $t \in [t_0,t_1)$,
with $t_1\in (t_0,+\infty]$.
The function 
$$r : [t_0,t_1)\to \RR$$ 
defined  by
$$
r(t)=\|F_t\|_{L^2}
$$
satisfies the ODE
$$
\frac{d r^2}{d t} = -8\phi(F_t)
$$
according to Proposition~\ref{prop:decay}.
In particular $r(t)$ is a non increasing function of $t\in [t_0,t_1)$.
If fact $r$ must be   decreasing unless it is constant. Indeed, if the
derivative of $r$ vanishes at some $t_2\in[t_0,t_1)$, then
$\phi(F_{t_2})=0$, which implies that  $F_{t_2}$ is a fixed point of
the modified moment map flow. By
the local uniqueness property of the Cauchy-Lipschitz
Theorem~\ref{theo:cauchy}, the flow
is static on the interval $[t_0,t_1)$. 
Similarly, $r>0$ unless the flow is static. Indeed, it $r(t_2)=0$ for some
$t_2\in[t_0,t_1)$, then $\frac{dr^2}{dt}$ vanishes at $t_2$ and the above
argument shows that the flow is static on the interval. Thus we have proved the following
lemma:
\begin{lemma}
	\label{lemma:nv}
	The function $r:[t_0,t_1)\to\RR$ is positive and decreasing,
	unless $F_t$ is a static solution of the modified  moment map flow 
	along the interval. 
\end{lemma}

Given a smooth solution $F_t$ of the modified moment map
flow defined for
 $t\in[t_0,t_1)$, we know that $F_t$ does not vanish, unless $F_t=0$ along
 the interval.
 Then, there exists a unique point $(r(t), G_t)\in
\widehat\scrF_\alpha$, such that $p(r(t),G_t)=F_t$, defined by  
$$r(t)=\|F_t \|_{L^2},\quad G_t=\frac {F_t}{r(t)}.$$

We are going to see that $(r, G)$ is  solution of an evolution equation
defined on the real blowup. First 
 $2r\frac{d r}{d t} = -8\phi(F) = -8\phi(rG)= -8r^4\phi(G)$, so that
\begin{equation}\label{eq:Ren1}
	\frac{d r }{d t} = -4r^3\phi(G).
\end{equation}
By the chain rule, we have
\begin{equation}\label{eq:Rc1}
\frac {\del F}{\del t}= \frac{d r}{d t} G + r\frac{\del G}{\del
t}= -4r^3\phi(G)G + r \frac{\del G}{\del t}.
\end{equation}
Since $F$ is a flow line, we have
\begin{equation}\label{eq:Rc2}
\frac{\del F}{\del t} = -\Pi_\alpha \nabla\phi(F) = -r^3\Pi_\alpha\nabla\phi(G).
\end{equation}
By Equations~\eqref{eq:Rc1} and~\eqref{eq:Rc2}, we deduce that
\begin{equation}\label{eq:Ren2}
r^{-2} \frac{\del G}{\del t} = 4\phi(G)G -\Pi_\alpha\nabla\phi(G).
\end{equation}
In conclusion, we have the following proposition
\begin{prop}
	\label{prop:lifteq}
	Let $(r(t),G_t)\in\widehat\scrF_\alpha$ be a  path defined
	for $t\in[t_0,t_1)$, such that $F_t=r(t)G_t$ is solution of the
	modified moment map flow.
	Then $(r,G)$ is solution of the system of differential equations 
	\begin{align}
		\frac{\del r }{\del t} &= -4r^3\phi(G)\\
		 \frac{\del G}{\del t} &=r^2\left ( 4\phi(G)G
		 -\Pi_\alpha\nabla\phi(G)\right)
	\end{align}
	for $t\in[t_0,t_1)$.
\end{prop}

\subsection{Renormalized flow}
Let $(r(t),G_t)\in\widehat \scrF_\alpha$ be
a solution of the lifted flow as above. We introduce a
function $s:[t_0,t_1)\to \RR$, solution of the ODE
\begin{equation}
	\boxed{	\label{eq:rep}
	\frac{d s}{d t} = r^{2}.}
\end{equation}
If $F_t$ does not vanish, then $r>0$ and $s$ is strictly increasing.
Thus, $s$ can be used as a reparametrization of the evolution
equation.
In particular $r^{-2}= \frac {d t}{d s}$ so that
$$
r^{-2}\frac{\del G}{\del t } = \frac{d t}{d s} \frac{\del G}{\del t}
= \frac{\del G}{\del s}.$$
We deduce the differential system of equations, 
\begin{equation}
	\boxed{
		\frac{\del G}{\del s}= 4\phi(G)G-\Pi_\alpha \nabla\phi(G)\label{eq:RenA}}
\end{equation}
and
\begin{equation}
	\boxed{
		\frac{d r }{d s}=-4 r\phi(G)\label{eq:RenB}}
\end{equation}
Equation~\eqref{eq:RenA}, which is the downward gradient flow of the
restricted functional $\phi:\SS_\alpha\to\RR$, is called the \emph{renormalized
flow}.
The ODE given by Equation~\eqref{eq:RenB}
and Equation~\eqref{eq:rep} are used to pass from  the renormalized
 flow to the modified moment map flow  and vice versa:
let $G_s$ be a solution of the renormalized flow for $s$ in some interval
$I$. Then we can solve the ODE \eqref{eq:RenB} on the interval $I$. The
general solution has the form $r(s)=r_0e^{-4 \tau(s)}$, where $\tau$ is an
integral of $\phi(G_s)$ on $I$ and $r_0\geq 0$, by convention. If $r_0=0$,
then $r=0$ and we obtain $F=rG=0$.
If $r_0>0$, we have $r>0$ and we can solve the ODE~$\frac{d t}{d s} = \frac
1{r^{2}}= \frac 1{r_0^2}e^{8\tau(s)}$
	deduced from Equation~\eqref{eq:rep}.
	Hence
	$t(s)=\frac 1{r_0^2}\int e^{8\tau(s)}ds$
	is defined on $I$ and can be used as a reparametrization to obtain a solution $F_t=rG$ of
	the modified moment map flow.
From our discussion, we deduce the following proposition:
\begin{prop}
	\label{prop:reneq}
 The space of fixed points of the evolution
	equations given by the
	renormalized flow~\eqref{eq:RenA} and Equation~\eqref{eq:RenB}
consists of pairs  $(r,G)\in\widehat\scrF_\alpha$  such that $G$ is a
	soliton of $\SS_\alpha$ and $r\geq 0$. If $r>0$ then $G$ is a non
	proper soliton. In
	particular, the only fixed points $(r,G)$ such that $G$ is a proper
	soliton belong to the exceptional divisor $\scrE$ of the real blowup.
\end{prop}

In the spirit of Morse-Bott theory, we show that the downward gradient flow
of $\phi:\SS_\alpha\to \RR$, which is the renormalized flow, has short time
existence:
\begin{theo}
	\label{theo:str}
	Let $\alpha$ be a  cohomology class in $H^1(M,\vecV)\setminus 0$
	and $(k,\nu)$ some Hölder parameters with $k\geq 2$. For every
	$G\in\scrF_\alpha^{k,\nu}$, such that $\|G\|_{L^2}=1$, 
	there exists $\epsilon>0$ and
	a continuous map $G_s\in \scrF_\alpha^{k,\nu}$ defined for
	$s\in[-\epsilon,\epsilon]$, which is a solution of the renormalized
	flow and such that $G_0=G$. Furthermore, this solution is unique on
	the interval.
\end{theo}
\begin{proof}
	We notice that the term $4\phi(G)G$ is polynomial in the
	coefficients of $G$.
	The proof relies on the Cauchy-Lipschitz theorem applied to
	Equation~\eqref{eq:RenA} and follows the same argument as in
	Theorem~\ref{theo:cauchy}. 
\end{proof}

\subsection{Proofs of the main theorems for the smooth case}
In this section, we collect our results to prove the main statement of the
introduction.
\begin{proof}[Proof of Theorem~\ref{theo:A}]
	The canonical Kähler structure and the
	conjugate hyperKähler structure on a quotient torus is described 
	at \S\ref{sec:hktorus}.
The resulting hyperKähler structure on the moduli space $\scrF$ follows
	from Proposition~\ref{prop:strk} and the hyperKähler moment map is
	introduced at \S\ref{sec:hkmap}. The theorem then follows from
	Lemma~\ref{lemma:sd} and Formula~\eqref{eq:densities}.
\end{proof}
\begin{proof}[Proof of Theorem~\ref{theo:eqmu}]
	The theorem  is a restatement of Proposition~\ref{prop:symplecto}.
\end{proof}
\begin{proof}[Proof of Theorem~\ref{theo:C}]
 	The fixed points of the modified moment map flow are the zeroes of
	$\phi$ by Proposition~\ref{prop:zeroes}.
	The $L^2$ decay properties is a consequence of
	Proposition~\ref{prop:decay} and Theorem~\ref{theo:cauchy} shows
	the short time existence of the flow.
\end{proof}
\begin{proof}[Proof of Theorem~\ref{theo:D}]
	This is a restatement of Theorem~\ref{theo:str}.
\end{proof}

\section{Rigidity and solitons}
\label{sec:rigidsolitons}
The goal of this section is to give a proof of
Theorem~\ref{theo:connected} and, more specifically, the stronger result
given at Theorem~\ref{theo:connectedsol}.
\subsection{Main result}
Given  a symplectic cohomology class $\alpha$, we denote by 
$$\scrS^{k,\nu}_\alpha\subset \scrF_\alpha^{k,\nu}\setminus 0$$
the space of non
zero solitons with Hölder regularity $(k,\nu)$, for some $k\geq 2$. 
 We denote by define 
 $\scrS^{k,\nu}_{\alpha,p}$  the subspace of proper solitons and by
 $\scrS^{k,\nu}_{\alpha,np}$ the subspace of non proper solitons of
 $\scrS_\alpha^{k,\nu}$. Then we  have the obvious  partition
 $$
\scrS^{k,\nu}_\alpha=\scrS_{\alpha,p}^{k,\nu}\sqcup\scrS_{\alpha,np}^{k,\nu}.
 $$
It turns out that the  components of the above decomposition
are also topologically separated for $k\geq 3$, in the sense of the following theorem:
\begin{theo}
	\label{theo:connectedsol}
	Let $\alpha\in H^1(M,\vecV)$ be a symplectic cohomology class and  $k\geq 3$. Then, 
	the spaces of solitons $\scrS_{\alpha,p}^{k,\nu}$ and $\scrS_{\alpha,np}^{k,\nu}$ are open and closed
	subspaces of $\scrS^{k,\nu}_\alpha$,
	endowed the the Hölder $C^{k,\nu}$-topology. 

In particular, if $\psi:X\to\scrS_\alpha^{k,\nu}$ is a continuous map from a
	connected topological space $X$ such that
	$\psi(X)\cap\scrS^{k,\nu}_{\alpha,np}\neq
	\emptyset$, then $\psi(X)\subset \scrS^{k,\nu}_{\alpha,np}$.
	\end{theo}

The proof of Theorem~\ref{theo:connectedsol} is postponed at the end of
this section. We notice that the result implies one of our main theorems:
\begin{proof}[Proof of Theorem~\ref{theo:connected}]
The Whitney topology is stronger that the Hölder topology and the result is
	an immediate consequence of  Theorem \ref{theo:connectedsol}.
\end{proof}

\subsection{Linearized soliton equation}
By Definition~\ref{dfn:soliton}, a soliton  $F\in \scrF_\alpha$ is a solution of the equation $E(F) =0$,
where $E:\scrF_\alpha\to\scrF_\alpha$ is the functional
$$
E(F) = 4\phi(F)F - \|F\|_{L_2}^2\Pi_\alpha\nabla\phi(F) .
$$
Alternatively, we  may consider the dual functional 
$$e:\scrF_\alpha
\to L(\scrF_\alpha,\RR)$$
with values in the dual of $\scrF_\alpha$,
given by
$$
e(F)\cdot \dot F = \ipp{E(F),\dot F}
$$
for every $\dot F\in\scrF_\alpha$.
By definition
$$
e(F)\cdot\dot F = 4\phi(F)\ipp{F,\dot F} - \|F\|_{L^2}^2 D\phi|_F\cdot \dot F.
$$
In the particular case where $f:M\to M$ is a symplectomorphism of
$(M,\omega_M)$ with $[\scrD
f] =\alpha$, we have $\phi(\scrD f)=0$ and $F=\scrD f$ is a critical point of
$\phi:\scrF_\alpha\to \RR$. Since $\|f_*\|_{L^2}=\|\scrD f\|_{L^2}$,   the differential of $e$ is given by
$$
De|_{\scrD f}\cdot (\dot F_1, \dot F_2) = -\| f_*\|_{L^2}^2 D^2\phi|_{\scrD f}\cdot (\dot F_1,\dot
F_2) 
$$
and $De|_{\scrD f}$ agrees with  the Hessian of $\phi$ up to a constant
factor. 
\begin{lemma}
	Let $F\in\scrF_\alpha$ be a non proper soliton. Then, the Hessian of $\phi$ at $F$ is
	given by the formula~:
\begin{equation}\label{eq:hess}
	D^2\phi|_F\cdot (\dot F_1,\dot F_2)=\int_M \left (
	\sum_\bullet  g(\scrR_\bullet F,\dot F_1) g(\scrR_\bullet F,\dot
	F_2) 
	\right )\vol_M
\end{equation}
for every $\dot F_1, \dot F_2\in\scrF_\alpha$.
\end{lemma}
\begin{proof}
	By Lemma~\ref{lemma:formphi} and Equation~\eqref{eq:diffphi}
	$$
	D\phi|_F\cdot F_1 = \sum_\bullet \ipp{W_\bullet(F),\dot F_1}.
	$$
	where $W_\bullet(F)=-\mu_\bullet(F)\scrR_\bullet F$ by
	Equation~\eqref{eq:w}. Thus
	$$
	DW_\bullet|_F\cdot \dot F_2= -(D\mu_\bullet|_F\cdot \dot F_2) \scrR_\bullet F,
	$$
	since $\mu_\bullet(F)=0$.
	By Equation~\eqref{eq:diffmu}, we deduce that
	$$
	DW_\bullet|_F\cdot \dot F_2= g(\scrR_\bullet F,\dot F_2)\scrR_\bullet F,
	$$
	which implies
	\begin{align*}
		D^2\phi|F\cdot(\dot F_1,\dot F_2) &= \sum_\bullet
	\ipp{g(\scrR_\bullet F,\cdot F_2)\scrR_\bullet F, \dot F_1}\\
		&= \sum_\bullet
		\ipp{g(\scrR_\bullet F,\cdot F_2), g(\scrR_\bullet F, \dot
		F_1)}\\
	\end{align*}
	and the lemma follows.
\end{proof}
The following corollaries are immediate:
\begin{cor}
	Let $F$ be a non proper soliton in $\scrF_\alpha$.
The Hessian of $\phi:\scrF_\alpha\to\RR$ at  $F$ is
	a non negative symmetric bilinear form. Moreover its kernel
consists of vectors $\dot F\in\scrF_\alpha$ such that $D\mub|_F\cdot \dot F=0$. 
\end{cor}
	\begin{cor}
		Let $f$ be a symplectomorphism of $(M,\omega_M)$ and
		$F=\scrD f\in \scrF_\alpha$, where $\alpha=[F]$.
		Then, the differential of the soliton equation  $DE|_F$ is
		a  non positive selfadjoint operator, whose kernel
		(and cokernel) is  identified to $\ker D\mub|_F\subset
		\scrF_\alpha $.
\end{cor}

We continue our computation  at $F=\scrD f$, where
$f\in\Symp_\alpha(M,\omega_M)$: we
can write Formula~\eqref{eq:hess} as
\begin{equation}\label{eq:hess1}
	D^2\phi|_F(\dot F_1,\dot F_2)=\int_M 
	\sum_\bullet  (D\mu_\bullet|_F\cdot \dot F_1) (D\mu_\bullet|_F
	\cdot \dot F_2) 
	\vol_M
\end{equation}
and, using the natural inner product on $\RR^3$, the above formula can be
expressed as
\begin{equation}\label{eq:hess2}
	D^2\phi|_F(\dot F_1,\dot F_2)=\ipp{ 
 D\mub|_F\cdot \dot F_1, D\mub|_F
	\cdot \dot F_2 
	}.
\end{equation}

Let $v$ be a tangent vector field along $f:M\to M$. Then, there exists a
smooth family of maps
$f_t:M\to M$ defined for $t$ in a neighborhood of $0$, such that $f_0=f$
and $\frac{\del f_t}{\del t}|_{t=0}= v$. 
By definition $\scrD f_t = \rho \circ (f_t)_*$, hence
$$
\left .\frac \del{\del t}\scrD f_t\right |_{t=0}=\left .\frac{\del (\rho
\circ (f_t)_*)}{\del
t}\right |_{t=0}= d \rho \circ v_*=d(\rho\circ
v)=d\eta
$$
where $\eta=\rho\circ v$.

We will  use the notations summarized in the commutative
diagram below: if $f$ is a diffeomorphism,
we may write $v=f_*u$, for some vector field $u$ on $M$. We put $\eta=\rho\circ v$
and  
obtain the commutative diagram:
\begin{equation}
	\label{diag:conv}
	\begin{tikzcd}
		TM\ar[rr, bend left = 40,"\scrD f"]\ar[d, "\pi"]\ar[r,"f_*"] & TM\ar[d,"\pi"]\ar[r,"\rho"]& \vecV \\
		M \ar[r,"f"]\ar[u, bend left = 60,"u"] \ar[ru,"v"]\ar[rru,
		bend right = 60,"\eta", swap] & M
\end{tikzcd}
\end{equation}
Hence, if $F=\scrD f$  and $\dot F_i= d\eta_i$, where $\eta_i=\rho\circ v_i$
for some vector fields $v_i$ along $f$, then
\begin{equation}
\label{eq:hess3}
D^2\phi |_{\scrD f}\cdot(d\eta_1,d\eta_2)= \ipp{D\mub|_{\scrD f} \cdot
d\eta_1,
D\mub|_{\scrD f}
\cdot  d\eta_2   }.
\end{equation}
According to Remark~\ref{rmk:mutilde}, the hyperKähler moment map $\mub$
can be considered as a map $\tilde\mub$ with values in anti-selfdual forms.
This point of view makes  computations evident, as in the
next lemma~:
\begin{lemma}
	\label{lemma:dmutilde}
	Let $f$ be a diffeomorphism of $M$ and $v$ be a vector field along~$f$. 
	Then for every vector field $v$ along $f$, we have
$$
	D\tilde \mub|_{\scrD f}\cdot d\eta= \sqrt 2 d^-(\iota_uf^*\omega_M) 
$$
where $u$ is the vector field on $M$ defined by $v=f_* u$ and $\eta=\rho\circ
	v$.
\end{lemma}
\begin{proof}
	Let $f_t$ be a smooth family of maps defined for $t$ in some
	neighborhood of $0\in\RR$, such that $f_0=f$ and $\frac{\del
	f_t}{\del t}|_{t=0} =v$.  By definition
$$
	\tilde \mub(\scrD f_t) = \sqrt 2 (f_t^*\omega_M)^-
$$
and its variation at $t=0$ is given by
$$
	D\tilde \mub|_{\scrD f} \cdot d\eta =\sqrt 2 \left .
	\frac{\del}{\del
	t}\right |_{t=0}(f^*_t\omega_M)^-
$$
If we write $v=f_*u$, where $u$ is a vector field on $M$, then 
the $RHS$ can be expressed as
$$
	(\scrL_uf^*\omega_M)^- = d^-(\iota_u f^*\omega_M)+(\iota_ud
	f^*\omega_M)^- ,
$$
where $\scrL_u$ is the Lie derivative,
and we obtain the lemma since $f^*\omega_M$ is closed.
\end{proof}

\begin{lemma}
Let $v_1, v_2$ be two tangent vector fields
	along a symplectomorphism $f:M\to M$. Using the notation
	conventions of Diagram~\eqref{diag:conv}, we have
	$$
	D^2\phi |_{\scrD f} (d\eta_1,d\eta_2)=2 \ipp{d^-(u_1^\flat), d^-u_2^\flat}
$$
	where $\flat: TM\to T^*M$ is symplectic musical isomorphism, 
	defined by $u^\flat = \iota_u\omega_M$.
\end{lemma}
\begin{proof}
	We know that $\tilde \mub$ and $\mub$ have the same $L^2$-norms, by
	definition (cf. Remark~\ref{rmk:mutilde}), and the result follows from
	Lemma~\ref{lemma:dmutilde} and Equation~\eqref{eq:hess3}. 
\end{proof}
If $f$ is a diffeomorphism, there is a correspondence between vector fields
$u$ on $M$ and vector field $v$ along $f$ via the equation $f_*u =v$. We
denote the inverse of this map 
$$P_f = (f^{-1})_*=(f_*)^{-1},
$$
so that $u=P_fv$.

\begin{cor}
	\label{cor:diffe}
	Let $v_1$ and $v_2$ be two vector fields along a symplectomorphism
	$f\in\scrM_\alpha$. Then, we have
$$
  De|_{\scrD f} \cdot (d\eta_1,d\eta_2)= -2\| f_*\|_{L^2}^2 \ipp{d^-
	(P_fv_1)^\flat,d^-(P_fv_2)^\flat}
$$
where $\eta_i=\rho\circ v_i$.
\end{cor}
\begin{cor}
	\label{cor:diffpsi}
	Let $v$ be a vector field along a symplectomorphism
	$f\in\Symp_\alpha(M,\omega_M)$.
	Then, we have
$$
	d^\star DE|_{\scrD f} \cdot d\eta = -2\| f_*\|_{L^2}^2
	 \; \rho \cdot P_f^\star\cdot (d^\star d^-
	(P_fv)^\flat)^\sharp
$$
	where $P_f=(f_*)^{-1}$ 
and 
$\flat:TM\to T^*M $ and $\sharp:T^*M\to TM$ denote the symplectic duality
	musical operators.
\end{cor}
\begin{proof}
	By definition of $d^\star$, we have
	$$\ipp{d^\star DE|_{\scrD f}\cdot d\eta_1,\eta_2}= 
	\ipp{DE|_{\scrD f}\cdot d\eta_1,d\eta_2}= De|_{\scrD
	f}(d\eta_1,d\eta_2).$$
	By Corollary~\ref{cor:diffe}, we deduce 
	$$
	\ipp{d^\star DE|_{\scrD f}\cdot d\eta_1,\eta_2}= -2\|
	f_*\|^2_{L^2}\ipp{d^\star d^- (P_f v_1)^\flat, (P_f v_2)^\flat}
	$$
	Now $\ip{\beta, u^\flat}= \ip{\beta^\sharp,u}$ and
	$\ip{v_1,v_2}=\ip{\rho v_1,\rho v_2}$
therefore, 
	\begin{align*}
		\ipp{d^*DE|_{\scrD f}\cdot d\eta_1,\eta_2}&= 
		-2\|
		f_*\|^2_{L^2}\ipp{ (d^\star d^- (P_fv_1^\flat) )^\sharp
		, P_f v_2}\\
		&=  -2\|f_*\|^2_{L^2}\ipp{ P_f^\star\cdot (d^\star d^-( P_f
		v_1)^\flat)^\sharp , v_2}\\
		&=  -2\|f_*\|^2_{L^2}\ipp{\rho\cdot P_f^\star\cdot (d^\star d^-( P_f
		v_1)^\flat)^\sharp , \eta_2}.
	\end{align*}	
and the corollary follows.
\end{proof}

\subsection{Rigidity of non proper solitons in the case of maps}
\label{sec:rigid}
A symplectomorphisms $f\in\Symp_\alpha(M,\omega_M)$ provides a non proper
soliton
$F=\scrD f\in\scrF_\alpha\setminus 0$.
By definition, this is a solution of the
equation $E\circ\scrD (f)=0$. The symplectic deformations of the map $f$ provide an
infinite dimensional family of solutions of the equation $E\circ \scrD
f=0$. We are going to prove that  they are locally the only ones:
\begin{theo}
	\label{theo:ift}
	Given  some Hölder
	space parameters $(k,\nu)$ with $k\geq 4$ and a symplectomorphism
	$f\in\scrM^{k,\nu}_\alpha$, there exists
	$\epsilon>0$ with the following property~: for every $h\in
	\scrM_\alpha^{k,\nu}$ such that $\scrD h$ is a soliton and
	 $\|f-h\|_{k,\nu}\leq
	\epsilon $,  then  $h$ is a symplectomorphism.
\end{theo}
\begin{proof}
	The result is an immediate consequence of Lemma~\ref{lemma:weaker}
	and Theorem~\ref{theo:ift2}.
\end{proof}

For technical reasons, it is convenient to replace the soliton equation for
maps $E\circ\scrD (f)=0$ with the weaker equation 
$$\Psi(f)=0,
$$
where $\Psi$ is the functional
\begin{align*}
	\Psi:\scrM_\alpha  &\to \Omega^0(M,\vecV)\\
	f&\mapsto d^\star E(\scrD f).
\end{align*}
Obviously, $E(\scrD f)=0$ implies $\Psi(f)=0$. Conversely, $\Psi(f)=0$
implies that $E(\scrD f)$ is orthogonal to closed  forms. Since $E(\scrD f)$ is
 closed, by definition of $E$, it follows that $E(\scrD f)$ is
harmonic and we have the following result:
\begin{lemma}
	\label{lemma:weaker}
	The solutions $f\in\scrM_\alpha$ of the equation $\Psi(f)=0$ are
	the maps  such that $E(\scrD f)$ is a harmonic form. 
\end{lemma}

In the rest of this section, we prove a rigidity result for $\Psi$.
Loosely stated, near a symplectic map $f$, every zero of $\Psi$ is also a
symplectic map:
\begin{theo}
	\label{theo:ift2}
	Given  some Hölder
	space parameters $(k,\nu)$ with $k\geq 4$ and a symplectomorphism
	$f\in\scrM^{k,\nu}_\alpha$, there exists
	$\epsilon>0$ with the following property~: for every $h\in
	\scrM_\alpha^{k,\nu}$ such that $\Psi(h)=0$  and
	 $\|f-h\|_{k,\nu}\leq
	\epsilon $,  then  $h$ is a symplectomorphism.
\end{theo}
\begin{proof}
	The theorem is a restatement of Corollary~\ref{cor:theoift}.
\end{proof}

\begin{lemma}
The map $\Psi$ is given by
$$
	\Psi(h)=4\|(h^*\omega_M)^-\|^2_{L^2}  
	d^\star \scrD h -\|h_*\|^2_{L^2}
	d^\star\nabla\phi(\scrD h)
$$
\end{lemma}
\begin{proof}
	The formula is obtained from the definition of $E(F)$, replacing $F$ with
	$\scrD h$ and using
	the fact that $\|\scrD h\|_{L^2}=\|h_*\|_{L^2}$. We also notice
	that $d^\star\Pi_\alpha F= d^\star F$ for every $F\in\scrF$. Indeed,
	the orthogonal projection $\Pi_\alpha$ operates by adding some
	harmonic and coclosed terms, which belong to the kernel of
	$d^\star$. Finally, we have $\phi(\scrD h) 
	=\frac 12\|\tilde\mub(\scrD h)\|_{L^2}^2=
	\|(h^*\omega_M)^-\|_{L^2}^2$ which proves the formula.
\end{proof}
\begin{lemma}
	\label{lemma:diffpsi}
The variation of $\Psi$ at a symplectomorphism $f$ is given by 
$$
	D\Psi|_f\cdot v = -2\|\scrD
	f\|^2_{L^2}\rho\cdot P_f^\star\cdot  (d^\star d^-(P_f\cdot v)^\flat)^\sharp
$$
	where $P^\star_f$ is the formal ajoint of $P_f=(f_*)^{-1}$ and  $v$
	is a vector field along $f$. The differential is formally
	selfadjoint in the sence that 
	$$
	\ipp{D\Psi|_f\cdot v_1,\rho\cdot v_2} =
	\ipp{\rho \cdot v_1, D\Psi|_f\cdot v_2} .	$$
	Furthermore, $D\Psi|_f$ is non positive, in the sense that 
	$$
	\ipp{D\Psi|_f\cdot v,\rho\cdot v} = -2 \|\scrD f\|_{L^2}^2 \|d^-(P_f
	v)^\flat\|_{L^2}^2,
	$$
	and the kernel of $D\Psi|_f$ is identified to symplectic vector
	fields along $f$.
\end{lemma}
\begin{proof}
	By definition $D\Psi|_f\cdot v$ is the variation of $d^\star E|_{\scrD
	f}\cdot d\eta$, where $\eta=\rho\circ v$ and the formula follows from
	Corollary~\ref{cor:diffpsi}.
	Now,
	\begin{align*}
\ipp{\rho\cdot P_f^\star \cdot (d^\star d^- (P_f
		v_1)^\flat)^\sharp,\rho\cdot v_2}
		&=\ipp{P_f^\star\cdot (d^\star d^- (P_f v_1)^\flat)^\sharp , v_2} \\
		&=\ipp{(d^\star d^- (P_f v_1)^\flat)^\sharp , P_fv_2} \\
		&= \ipp{d^\star
	d^- (P_f v_1)^\flat , (P_fv_2)^\flat}\\
			&= \ipp{
	d^- (P_f v_1)^\flat , d(P_fv_2)^\flat}\\
		&= \ipp{
	d^- (P_f v_1)^\flat , d^-(P_fv_2)^\flat}
	\end{align*}
so that 
$$
	\ipp{D\Psi|_f\cdot v_1,\rho\cdot v_2}= \ipp{\rho\cdot v_1, D\Psi|_f\cdot v_2},
$$
and we conclude that $D\Psi|_f$ is formally selfadjoint. In the case where
$v=v_1=v_2$, the above computation proves the last identity of the
	lemma.
\end{proof}
\subsection{Weinstein chart}
The  \emph{Weinstein
charts} provide nice local coordinates on the moduli
space $\scrM$. They are particularly well suited  for applying the implicit
function theorem in the proof of
Theorem~\ref{theo:ift2}.
We recall the basic steps
for the construction of a Weinstein chart  (for more details see~\cite{Ban,MDS}):
\begin{enumerate}
	\item
The product $N=M\times M$ is
endowed with the product symplectic form 
$$\omega_N= p_1^*\omega_M
-p_2^*\omega_M,$$
where $p_i:N\to M$ are the canonical projections on the
$i$-th factor of $N=M\times M$.
It is well known that $f$ is a symplectomorphism of $(M,\omega_M)$ if, and only if,
its graph 
$$G_f=\{(x,f(x)), x\in M\} \subset N
$$
is a Lagrangian submanifold of $(N,\omega_N)$.

\item
The total space  $T^*M$ of the contangent bundle is endowed with the canonical symplectic form
$$\omega_{T^*} = d\Lambda,$$
where $\Lambda$ is the Liouville form on $T^*M$, defined by
$$
\Lambda(w)= \tilde\pi(w) (\pi_*(w))
$$
where $\pi:T^*M\to M$ is the canonical projection,  $\pi_*:TT^*M\to TM$
is its tangent map and $\tilde \pi : TT^*M\to T^*M$ is the canonical projection
of the tangent bundle of $T^*M$.

\item
If $f$ is a symplectomorphism, the Lagrangian tubular
neighborhood theorem shows that there exists an open neighborhood
$\cU\subset N$ of
$G_f$  and an open neighborhood $\cV\subset T^*M$ of the $0$-section
$\beta_0:M\to T^*M$, together with a diffeomorphism
$$\theta:\cV\to \cU,$$
such that 
$$\theta^* \omega_N = \omega_{T^*} \quad \mbox{ and }\quad \theta \circ \beta_0(x) =
(x,f(x)) \forall x\in M.$$

\item 
In particular, any differential $1$-form $\beta$ on $M$, such that
$\beta(M)\subset \cV$,  defines
an embedding $\theta\circ\beta:M\to N$, whose image, denoted $G_{f,\beta}$, is a deformation of $G_f$.
Another classical result is that $G_{f,\beta}$  is Lagrangian if, and only if, $\beta$ is a
closed differential $1$-form. 

\item
By
construction $p_1\circ \theta\circ\beta_0 = \id_M$. It follows that
$\varphi_\beta = p_1\circ \theta\circ\beta$ is  a diffeomorphism of
$M$ as well, provided the differential $1$-form $\beta$ has sufficiently small  $C^1$-norm.
Then we can define
$$
f_\beta = p_2\circ\theta\circ\beta\circ\varphi_\beta^{-1},
$$
so that $G_{f,\beta}$ is the graph of $f_\beta:M\to M$. Notice that, by
definition $f_{\beta_0}=f$.
		
		\item The Weinstein chart at $f$ is given by
$$
W(\beta) = f_\beta,
$$
and is defined on a sufficiently small $C^1$-neighborhood of the $0$-section
		$\beta_0$. By construction, $f_\beta$ is homotopic to $f$,
		hence $W(\beta)\in\scrM_\alpha$, where $\alpha$ is the
		symplectic class $[\scrD f]$.
In the sequel is more convenient to use the $(k,\nu)$-Hölder norms, for some
$k\geq 2$ and we can  summarize the  construction in
Proposition~\ref{prop:weinstein}.
\end{enumerate}
\begin{prop}
	\label{prop:weinstein}
	Let $\alpha$ be a symplectic class and $(k,\nu)$  some Hölder parameters
	 with $k\geq 2$.
	For every symplectic map
	$f\in\scrM_\alpha^{k,\nu}$, there exists
	an open neighborhood $\scrU$ of $\beta_0=0$ in the space
	$\Omega^1(M)_{k,\nu}$ of differential $1$-forms
	with Hölder regularity  $C^{k,\nu}$ and an open neighborhood $\scrV$ of $f$ in the
	space of Hölder maps~$\scrM_\alpha^{k,\nu}$,
	such that the Weinstein map at $f$
	$$
	W:\scrU\to\scrV
	$$
	is a local
	diffeomorphims. Furthermore, the Weinstein chart restricts as a diffeomorphism
	$$W:\scrU_c\to\scrV_\omega$$
	where $\scrU_c\subset \scrU$ is  the subspace of closed forms and
	$\scrV_\omega\subset\scrV$ is the subspace of symplectic maps with
	respect to $\omega_M$.
\end{prop}
We can summarize Proposition~\ref{prop:weinstein} by a diagram,
\begin{equation}
	\begin{tikzcd}
		\scrU \ar[r,"W"] & \scrV  \\
		\scrU_c\ar[u,hook] \ar[r,"W"] &
		\ar[u,hook]  \scrV_\omega
\end{tikzcd}
\end{equation}
where vertical arrows are the canonical
inclusion maps and horizontal maps are diffeomorphisms.

\subsection{Differential of the Weinstein map}
In this section we consider the Weinstein chart at a symplectic map $f$ 
and we compute its differential  at
$\beta_0=0$.
Let $\dot \beta$ be a smooth differential $1$-form on $M$ and $\beta_t=t\dot\beta$, with
$\varphi_t=\varphi_{\beta_t}= p_1\circ\theta\circ\beta_t$. Then $\varphi_t$ is
a $1$-parameter family of diffeomorphisms, for $t$ sufficiently small,
such that $\varphi_0=\id_M$. We put
$f_t=f_{\beta_t}=W(\beta_t)$ and we compute
$$
\left . \frac{\del\varphi_{t}}{\del t}\right |_{t=0}= (p_1)_* \cdot
\theta_*\cdot\dot \beta 
$$
where $\dot \beta$ is understood as a vertical tangent vector field along
the zero section of $T^*M\to
M$.
Hence
$$
\left . \frac{\del\varphi^{-1}_{t}}{\del t}\right |_{t=0}= - (p_1)_* \cdot
\theta_*\cdot\dot \beta 
$$

The vector field $\theta_*\cdot \dot\beta$ along $G_f$ is neither vertical nor
horizontal in $N=M\times M$. However 
$$
\left .
\frac{\del f_t}{\del t}\right |_{t=0}=
\left .\frac{\del}{\del t} \theta\circ \beta_t\circ\varphi_t^{-1}\right |_{t=0} =
\theta_*\cdot\dot\beta - \theta_*\cdot (\beta_0)_*\cdot  (p_1)_*\cdot
\theta_*\cdot\dot\beta 
$$
is parallel to the fibers of $p_2:N\to M$. Indeed
$(p_1)_*\theta_*(\beta_0)_*=\id$, so
that the first projection of the above vector vanishes.
In conclusion
$$
\left . \frac{\del W(\beta_t)}{\del t}\right |_{t=0} = (p_2)_* \theta_*\cdot\dot \beta
$$
which is to say 
\begin{equation}
	\label{eq:dw}
DW|_{\beta_0}  = (p_2)_*\circ\theta_*.
\end{equation}
We deduce the following lemma
\begin{lemma}
	\label{lemma:diffw}
	The $1$-form $\dot \beta$ is closed if, and only if,
	$v=(p_2)_*\theta_*\cdot \dot\beta$ is a
	symplectic vector field along $f$, in other words
	$$
	d\dot\beta =0\Leftrightarrow d\iota _{u}\omega_M=0,$$
	 where
	$u$ is the vector field defined by $f_*u=v$.
\end{lemma}
\begin{proof}
The Weinstein chart is a local diffeomorphism that maps closed differential
	$1$-forms to symplectic maps and vice-versa, according to
	Proposition~\ref{prop:weinstein}. The lemma
	follows from the computation of $DW|_{\beta_0}$ given by Formula~\eqref{eq:dw}.
\end{proof}
\subsection{Implicit function theorem}
\label{sec:ift}
The map $\Psi$, defined in the smooth setting at~\S\ref{sec:rigid},
admits a canonical  extension to Hölder spaces
$$
\Psi:\scrV\to \Omega^0(M,\vecV)_{k-2,\nu},
$$
where $\scrV$ is an open neighborhood of $f$ in $\scrM_\alpha^{k,\nu}$,
chosen according to Proposition~\ref{prop:weinstein}, and 
$\Omega^0(M,\vecV)_{k-2,\nu}$ is the space of $\vecV$-valued functions on $M$ with Hölder regularity
$(k-2,\nu)$, for some $k\geq 2$.

 By definition, $\Psi$ vanishes along the space of
symplectomorphism $\scrV_\omega$. Our goal is to prove that there are no
other solutions of the equation $\Psi=0$ in $\scrV$,
provided it is a sufficiently small neighbordhood of $f$ in
$C^{k,\nu}$-norm.

It is convenient to work on an open set $\scrU\subset\Omega^1(M)_{k,\nu}$,
using the Weinstein chart
$W:\scrU\to\scrV$, where $\scrU$ is a neighbordhood of $\beta_0=0$, as in
Proposition~\ref{prop:weinstein}. The we  introduce the reparametrized map
\begin{equation}
	\label{eq:phihold}
	\Phi = \Psi\circ W:\scrU \to \Omega^0(M,\vecV)_{k-2,\nu}
\end{equation}
and our problem is equivalent to ask whether the points of $\scrU_c$ are the only
zeroes of $\Phi$ in $\scrU$.

By Lemma~\ref{lemma:diffpsi} and Lemma~\ref{lemma:diffw}, the 
kernel of $D\Phi|_{\beta_0}$ is the space of closed differential $1$-forms
$\scrW^{k,\nu}_c$ with
regularity $C^{k,\nu}$. In particular, Hodge theory provides an orthogonal splitting
$$
\Omega^1(M)_{k,\nu}=\scrW^{k,\nu}_c\oplus \scrW^{k,\nu}_{d^\star}
$$
where $\scrW_{d ^\star}^{k,\nu}$ is the space of coexact forms with regularity
$C^{k,\nu}$.

We have a similar splitting
$$
\Omega^1(M)_{k-2,\nu}=\scrW^{k-2,\nu}_c\oplus \scrW^{k-2,\nu}_{d^\star}
$$
for Hölder regularity $C^{k-2,\nu}$, under the assumption that  $k\geq 4$.
The symplectic duality $\sharp :\Omega^1(M)\to \Gamma(TM)$ composed with
$\scrD f:\Gamma(TM)\to \Omega^0(M,\vecV)$ provides an isomorphism
$$
\scrD f\circ \sharp:\Omega^1(M)_{k-2,\nu}\to \Omega^0(M,\vecV)_{k-2,\nu}.
$$
By Lemma~\ref{lemma:diffpsi},  the image of the factor
$$
\hat \scrW_{c}^{k-2,\nu}=\scrD f\circ\sharp(\scrW_c^{k-2,\nu})
$$
is the cokernel of $D\Phi|_{\beta_0}$.
and we have a splitting
$$
\Omega^0(M,\vecV)_{k-2,\nu}= \hat\scrW^{k-2,\nu}_c\oplus
\hat\scrW^{k-2,\nu}_{d^\star}
$$
where
$$
\hat\scrW^{k-2,\nu}_{d^\star}= \scrD
f\circ\sharp(\scrW^{k-2,\nu}_{d^\star})
$$
Finally Hodge orthogonal projection 
$$
\Pi_{d^\star}:\Omega^1(M)_{k-2,\nu}\to \scrW^{k-2,\nu}_{d^\star}
$$
induces a projector 
$$
\Pi_{d^\star}:\Omega^0(M,\vecV)_{k-2,\nu}\to \hat\scrW^{k-2,\nu}_{d^\star}
$$
via the isomorphism $\scrD f\circ\sharp$.
and we can consider the projected map
$$
\hat \Phi:\scrU\to\hat\scrW_{d^\star}^{k-2,\nu}
$$
defined by
$$
\hat\Phi = \Pi_{d^\star}\circ\Phi.
$$
By Remark~\ref{rmk:hodge}, the projector $\Pi_{d^\star}$ is continuous so
that $\hat\Phi$ is a differentiable map.
By construction $D\hat\Phi|_{\beta_0}$ is surjective and its kernel is
$\scrW_c^{k,\nu}$. Therefore the operator
$$
D\hat\Phi|_{\beta_0}:\scrW^{k,\nu}_{d^\star}\to\hat
\scrW^{k-2,\nu}_{d^\star}
$$
is an isomorphism.

By the implicit function theorem, we deduce the following results
\begin{theo}
	\label{theo:ifttech}
	Let $(k,\nu)$ be some Hölder parameters with $k\geq 4$.
If $\scrU$ is a sufficiently small neighbohood of $\beta_0=0$ in
	$\Omega^1(M)_{k,\nu}$, then the restriction 
	 of $\hat\Phi$ to every affine subspace parallel to
	$\scrW_{d^\star}^{k,\nu}$ is an embedding. More precisely
	for every 
	closed form $\beta\in\scrU_c$, 
$$
	\hat\Phi:\scrU\cap
	(\beta+\scrW^{k,\nu}_{d^\star})\to\hat \scrW^{k-2,\nu}_{d^\star}
	$$
	is an embedding.
	In particular $\beta$ is the only zero of $\hat \Phi$ in
	this affine subspace.
\end{theo}
\begin{cor}
	With the assumptions of Theorem~\ref{theo:ifttech}, 
	the map $\Phi$ defined at~\eqref{eq:phihold} satisfies
	$$\Phi^{-1}(0) = \scrU_c.
	$$
\end{cor}
\begin{proof}
If $\beta$ is a zero of $\Phi$, it is also a zero of the projected map
	$\hat\Phi$ and the result follows from Theorem~\ref{theo:ifttech}.
\end{proof}
\begin{cor}
	\label{cor:theoift}
	With the assumption of  Theorem~\ref{theo:ifttech},
	the map $\Psi$ defined on $\scrV$ satisties
	$$
	\Psi^{-1}(0)=\scrV_\omega.
	$$
	Furthemore, if $G\in \scrF^{k-1,\nu}_\alpha$ is a soliton of the form $G=\scrD h$,
	with $h\in\scrV$, then $h$ is a symplectomorphism.
\end{cor}
\begin{proof}[Proof of Theorem~\ref{theo:connectedsol}]
	The space $\scrS^{k,\nu}_{\alpha,np}$ is  the vanishing locus of
	the restricted energy functional $\phi:\scrS^{k,\nu}_\alpha\to\RR$. This
	functional is continuous, hence
	$\scrS^{k,\nu}_{\alpha,np}$ is a closed subspace of
	$\scrS^{k,\nu}_\alpha$.

	We now show that the space $\scrS^{k,\nu}_{np}$ is open in the
	space of non zero solitons.
	Let $F$ be a non proper soliton of $\scrF^{k,\nu}_\alpha\setminus 0$. 
 According to
	Lemma~\ref{lemma:nonvanish}, the cohomology class $[F]$ is non
	vanishing  and
	we may assume that
	$[F]=\alpha$, up to rescaling by a constant factor. Thus,
	there exists a symplectic map $f\in\scrM^{k+1,\nu}_\alpha$ such that
	$F=\scrD f$. 
	More precisely, we can define $f=\chi(F)$,  where $\chi$ is
	the integral defined by~\eqref{eq:primit}.
	  The map 
	$\chi$ was defined in the smooth case, however the construction extends in an obvious way as a
	continuous map:
$$
	\chi: \{G\in\scrF^{k,\nu}_\alpha, [G]=\alpha\} \to
	\scrM_\alpha^{k+1,\nu}.
$$
 For $k\geq 3$, we choose an open
	neighborhood $\scrV$ of $f$ in $\scrM_\alpha^{k+1,\nu}$, according to
	Corollary~\ref{cor:theoift}. 
	By definition, if $h\in\scrV$ is  such that $G=\scrD h$ is a
	soliton, then $h$ is symplectic.
	By continuity, $\chi^{-1}(\scrV)$ is an open neigborhood of $F$ in the
	source space of $\chi$ and the rescaling map
	\begin{align*}
		q:\scrF^{k,\nu}_\alpha &\to \{G\in \scrF_\alpha^{k,\nu},
		[G]=\alpha\}\\
		G&\mapsto \frac \alpha{[G]}G
	\end{align*}
is well defined and continuous in a neighborhood of $F$.
	We consider the neighborhood of $F$ given by 
	$\scrV' = (\chi\circ
	q)^{-1}(\scrV)\subset \scrF_\alpha^{k,\nu}$. If $F'\in \scrV'$ is
	a soliton, 
	then $G= q(F')$ is also a soliton, and  $h=\chi(G)\in \scrV$
	is such that $G=\scrD h$. By Corollary~\ref{cor:theoift}, this
	implies that $h$ is a symplectic map, which shows  that $G$ is not
	a proper soliton. It follows that $F'$ is not proper as well. 
In conclusion, every soliton of $\scrV'$ is not proper. This proves that
	$\scrS^{k,\nu}_{\alpha,np}$ is open in $\scrS_\alpha^{k,\nu}$.

	Finally, $\scrS^{k,\nu}_{\alpha,p}$ is the complement of
	$\scrS^{k,\nu}_{\alpha,np}$ in the space $\scrS^{k,\nu}_\alpha$. It follows that
	$\scrS^{k,\nu}_{\alpha,p}$ is open and closed in $\scrS_\alpha^{k,\nu}$.
	The  last statement of the theorem is
	classical, by the connexity of $X$.
\end{proof}

\section{Polyhedral  symplectic geometry}
\label{sec:polmap}
In this section, we introduce polyhedral analogues of all the
geometrical objects introduced  in the smooth setting, in the first part of
this work. 
The constructions stem from a quotient torus $M=V/\Gamma$, as in
\S\ref{sec:hk}, where  the underlying vector space $\vecV$ 
is equiped with a linear isomorphism with the space of quaternions $\HH$.
In \S\ref{sec:hk}, we saw that the quotient construction various structures
on $M$, including a flat Riemannian metric $g_M$ and several 
integrable compatible almost complex structures, $i,I,J$ and $K$ on $M$, with Kähler
forms $\omega_M, \hat\omega_I, \hat\omega_J$ and $\hat\omega_K$.
We will continue to use the same background geometry on the quotient torus
$M$, as in the smooth setting.

\subsection{Triangulations and polyhedral  maps}
In the next paragraphs, we recall some standard definitions and introduce
some basic concepts 
related to polyhedral geometry.

A \emph{$k$-simplex}, contained in some ambient affine space,
is  the \emph{convex hull} of $k+1$
\emph{affinely independent} points of the affine space. 
A  \emph{simplicial complex} $\scrK$ is a collection of simplices contained in some fixed
 affine ambient space. The collection must be stable by intersection and by passing to
subfaces.

Given a simplex $\sigma$, we say that $f:\sigma\to M$ is an \emph{affine
map} if it
admits an affine lift $\tilde f$ to the universal cover
$$
\begin{tikzcd}
	& V\ar[d,"p"]\\
	\sigma \ar[r,"f"]\ar[ur,"\tilde f"]& M
\end{tikzcd}
$$
Similarly a map $v:\sigma\to TM$ is affine if it admits an affine lift
to the tangent space of the universal cover $v:\sigma\to TV$.

A \emph{triangulation} $\scrT=(M,\scrK,\ell)$ of $M$ is given by a triple
where:
\begin{itemize}
	\item $\scrK$ is a geometric simplicial
complex contained in some ambient affine space,
		\item  $\ell:|\scrK|\to M$ is a homeomorphism, where $|\scrK|$ is
the topological space associated to $\scrK$.
\end{itemize}
	 A \emph{Euclidean
triangulation} is a
triangulation such that the restriction $\ell:\sigma\to M$ to every simplex
$\sigma\in\scrK$ is an affine map.
We say that a triangulation is \emph{finite}, if the underlying simplicial complex
is finite.

In this paper, we only consider 
	  \emph{finite Euclidean triangulations}  of the quotient torus
	  $M$. For simplicity,  a
	  \emph{triangulation} of $M$ always refers to finite Euclidean
	  triangulation, unless stated otherwise.
	 
	We have an alternate definition by passing to the universal cover:
 a finite Euclidean triangulation of $M$ is given equivalently by 
 a locally finite $\Gamma$-invariant  simplicial
	decomposition of~$V$.

	\begin{dfn}A map
$f:M\to \RR$  is called polyhedral with respect to a Euclidean triangulation
	$\scrT$ of $M$ if the following conditions are satisfied: 
\begin{enumerate}
	\item The map $f$ is continuous.
	\item The restriction of the map $f\circ \ell:\sigma\to \RR$  is affine for
		every $\sigma\in\scrK$.
\end{enumerate}
We define similarly polyhedral maps on $M$ with values in a vector space, an
affine space, in $M$ or in $TM$.
	\end{dfn}
The \emph{moduli space of polyhedral maps} $f:M\to M$, with respect
to some fixed Euclidean triangulation $\scrT$ of $M$, 
is denoted 
$$
\scrM(\scrT) = \{f:M\to M, \mbox{ $f$ is polyhedral with respect to
$\scrT$}\}.
$$
The moduli space is the obvious analogue of the space of smooth maps
$\scrM$. One of the main features of $\scrM(\scrT)$ is that it is a finite
dimensional manifold. 

Once a triangulation $\scrT$ is prescribed, we will often identify a
simplex $\sigma\in\scrK$ with its image $\ell(\sigma)\subset M$. Bearing
this in mind, it makes sense to talk about the restriction of a map defined
on $M$ to any simplex $\sigma$ of the triangulation understood as a domain
in $M$. Furthermore, a vertex $\sigma_0\in\scrK_0$ is assimilated to a point
$\sigma_0\in M$ via $\ell$.

A \emph{vector field} $v$, along  a polyhedral map $f\in\scrM(\scrT)$, is a map defined on the set of
$0$-simplexes of the triangulation,
such that the following diagram is commutative 
$$
\begin{tikzcd}
	&TM\ar[d,"\pi"]\\
	\scrK_0\ar[r,"f"]\ar[ur,"v"]	&M
\end{tikzcd}
$$
where $\pi:TM\to M$ is the tangent bundle and the set of vertices $\scrK_0$
is understood as a finite subset of $M$.
With such a definition, a vector field can be extended uniquely as a map $v:M\to
TM$, polyhedral with
respect to $\scrT$, such that the following diagram commutes
$$
\begin{tikzcd}
	&TM\ar[d,"\pi"]\\
	M\ar[r,"f"]\ar[ur,"v"]	&M
\end{tikzcd}
$$

The tangent space of $\scrM(\scrT)$ at $f$, denoted $T_f\scrM(\scrT)$, is
the vector
space of polyhedral vector fields along $f$. The tangent space provides an
exponential local chart at $f$ for the moduli space.
The exponential map if defined by
\begin{align*}
	T_f\scrM(\scrT)&\to\scrM(\scrT)\\
	v&\mapsto f+v
\end{align*}
where $f+v$ is given by
$$
(f+v)(x)=f(x)+v(x),
$$
for every $x\in M$ and $v(x)\in T_{f(x)}M$ acts by translation on the torus $M$.
By definition $\frac \del{\del t}(f+tv)|_{t=0}=v$, which shows that the space of
polyhedral vector fields along $f$ is indeed the tangent space.

\subsection{Differential for polyhedral maps}
\label{sec:diffpol}
A polyhedral map $f\in\scrM$ is generally not differentiable. However its
restriction $f:\sigma\to M$ is affine, hence differentiable, for every
$\sigma\in\scrK$. Thus, we have a collection of differentials
$$ \scrD f|_\sigma = \rho\circ (f|_\sigma)_*
$$
for every $\sigma\in\scrK$.
Since $f|_\sigma$ is affine, it follows that $\scrD f_\sigma$ is constant on
$\sigma$ and in particular $d\scrD f|_\sigma=0$. If $\sigma_2$ is a face of $\sigma_1\in\scrK$, then
$j^*\scrD f_{\sigma_1}=\scrD f_{\sigma_2}$, where $j:\sigma_2\to\sigma_1$ is the
canonical inclusion. In particular, all the informations of the
family $(\scrD
f_\sigma)_{\sigma\in\scrK}$ can be deduced only from the facets $(\scrD
f_\sigma)_{\sigma\in\scrK_4}$. 
Depending on the context, $(\scrD f_\sigma)$ can be understood as a family
where $\sigma\in\scrK$ or $\sigma\in\scrK_4$ without any loss of generality.

We define 
the space of families of  constant differential forms on facets
\begin{align*}
	\scrF(\scrT)=\{(F_\sigma)_{\sigma\in\scrK_4},\quad
	&	F_\sigma \in\Omega^1(\sigma,\vecV)\\
	& \mbox { and } F_\sigma \mbox{ is constant on $\sigma$}\}
\end{align*}
By definition, the differential $\scrD$ induces a linear map
$$
\scrD:\scrM(\scrT)\to\scrF(\scrT).
$$

We denote by $\Omega^k(M,\scrT)$ the space of families of differential forms
$\beta=(\beta_\sigma)_{\sigma\in\scrK}$ where $\sigma\in\scrK$, where $\beta_\sigma$ is a smooth
differential $k$-form on $\sigma$, understood as a domain in $M$. An
element $\beta=(\beta_\sigma)\in\Omega^k(M,\scrT)$
satisfies the \emph{Whitney condition}, if for every $\sigma_1,\sigma_2\in\scrK$,
with $\sigma_2$ a face of $\sigma_1$, we have
$j^*\gamma_{\sigma_1}=\gamma_{\sigma_2}$. 
The subspace of elements in $\Omega^k(M,\scrT)$ that satisfy the Whitney
condition is called the space of Whitney forms and  is denoted $\Omega^k_w(M,\scrT)$.
We define a differential operator $d$ on the space of Whitney forms: for
$\beta=(\beta_\sigma)$, we put $d\beta = (d\beta_\sigma)$. It is easy to
check that the operator $d$ preserves the Whitney condition 
and $d^2=0$. Thus we have recalled the definition of the Whitney complex
$$
d_k:\Omega^k_w(M,\scrT) \to \Omega^{k+1}_w(M,\scrT).
$$
We can extends  the construction to the case of $\vecV$-valued differential forms
and we obtain the Whitney cohomology spaces
$$
H^k_w(M,\vecV,\scrT).
$$
In particular, we have the following lemma:
\begin{lemma}
	The family of $\vecV$-valued  differential $1$-forms,  $ F=(\scrD
	f|_\sigma)$, where $\sigma\in\scrK$, is a closed Whitney form.
\end{lemma}

Smooth forms always satisfy  the Whitney condition. Hence we have a canonical map
$H^k(M,\vecV)\to H^k_w(M,\vecV,\scrT)$ from the DeRham cohomology. 
It turns out that this map is an isomorphism by the Whitney theorem. 
In particular, for every polyhedral map $f\in\scrM(\scrT)$, the family 
$F=(\scrD f|_\sigma)$ defines a cohomology class in $H^1_w(M,\vecV,\scrT)$,
hence in $H^1(M,\vecV)$.
For simplicity of notations, the family of differential forms $(\scrD f|_\sigma)$ is
often referred to as $\scrD f$.
Eventually, we have defined a  map
$$
\alpha:\scrM(\scrT)\to  H^1(M,\vecV)
$$
given by $\alpha(f)=[\scrD f]$

For an integral class $\alpha\in H^1(M,\vecV)\setminus 0$, we define the subspace   $\scrM_\alpha(\scrT)$ of polyhedral maps
$f$ such that $\alpha(f)=\alpha$. 
We also introduce the subspace of Whitney forms of $\scrF(\scrT)$ with
cohomology class in $\RR\alpha$, denoted $\scrF_\alpha(\scrT)$.
By construction, $\scrD$ defines a linear map
$$
\scrD:\scrM_\alpha(\scrT)\to\scrF_\alpha(\scrT),
$$
and its image is identified to $\scrM_\alpha(\scrT)/\vecV$.

Given $f\in\scrM(\scrT)$, the map tangent map $f_*$ is only defined along
simplexes of $\scrK$. The pullback of a smooth 
form $\beta$ by $f$,
defined by $f^*\beta=(f^*\beta|_\sigma\sigma)$,  satisfies the Whitney condition
as well. If $\beta$ is closed, so is $f^*\beta$, in the sense of Whitney.
As an application, 
  $f^*\omega_M=(f^*\omega_M|_\sigma)$ is well a defined closed
Whitney $2$-form. 
Furthermore, the fact that $f$ is affine along every simplex implies that
$f_*$ is constant along every simplex and that the pullback
$f^*\omega_M$ is constant along every simplex of the triangulation.
\begin{dfn}
	A polyhedral map $f\in\scrM(\scrT)$ such that the pullback
	$f^*\omega_M$ vanishes along every simplex of the triangulation is called
	a polyhedral symplectic map. 
The space of polyhedral symplectic maps with respect to $\scrT$ is denoted
$\Symp(M,\omega_M,\scrT)$. 
\end{dfn}
\begin{rmk}The space $\Symp(M,\omega_M,\scrT)$ does not have a group
structure, since polyhedral maps cannot be composed without passing to some
direct limit over all possible triangulations of $M$.
Proposition~\ref{prop:classical} does no apply out of the box, and it is not even clear
whether a symplectic polyhedral map is a homeomorphism of $M$.
\end{rmk}
Similarly to the smooth setting, we introduce the notation 
$$\Symp_\alpha(M,\omega_M,\scrT)$$
for
the space of polyhedral symplectic maps such that $f\in\scrM_\alpha(\scrT)$.

\subsection{HyperKähler structure in the polyhedral case}
The space of families $F=(F_\sigma)_{\sigma\in\scrK_4}$
of constant $\vecV$-valued  constant differential $1$-forms $\scrF(\scrT)$, is endowed with a HyperKähler
structure similar to the smooth setting.
The metric $g$ defined on the bundle $T^*M\otimes\vecV\to M$ induces a
$L^2$-metric $\cG$ on $\scrF(\scrT)$, similar to the smooth case (cf.
Formula~\eqref{eq:metr}) and given by
$$
\cG(F,G)=\sum_{\sigma\in\scrK_4}\int_\sigma
g(F_\sigma,G_\sigma)\vol_M
$$
for every $F=(F_\sigma)$ and $G=(G_\sigma)\in\scrF(\scrT)$. Following the
construction in the smooth case (cf. \S\ref{sec:hkmap}),
the almost complex structures $I, J$ and
$K$ on $M$  induce three almost complex structures $\cI, \cJ$ and $\cK$ on
$\scrF(\scrT)$,
defined by
\begin{align*}
	(\cI F)_\sigma&= -F_\sigma\circ I,\\
	(\cJ F)_\sigma&= -F_\sigma\circ J,\\
	(\cK F)_\sigma&= -F_\sigma\circ K,
\end{align*}
compatible with the metric $\cG$, with corresponding Kähler
forms 
$$\Omega_I=\cG(\cI\cdot,\cdot),\quad \Omega_J=\cG(\cJ\cdot,\cdot) 
\mbox{ and }
\Omega_K=\cG(\cK\cdot,\cdot).
$$
In conclusion, the moduli space $\scrF(\scrT)$ is endowed with a
natural hyperKähler structure $(\scrF(\scrT),\cG,\cI,\cJ,\cK)$.

\subsection{Torus action in the polyhedral case}
 The group $\TT(\scrT)$ 
is defined as the space of
functions $\lambda:\scrK_4\to S^1$  on the set of facets of the
triangulation. Equivalently, $\lambda$ can be understood as a collection
$(\lambda_\sigma)_\sigma\in\scrK_4$ of
constant functions $\lambda_\sigma: \sigma\to S^1$ on each facet of the
triangulation.
In fact $\TT(\scrT)$ is isomorphic as a group to the real torus $(S^1)^m$, where the
dimension $m$ is the number of facets of
the triangulation.

The Lie algebra $\LieTT(\scrT)$ of the group $\TT(\scrT)$, is  the
vector space of real valued functions $\zeta:\scrK_4\to \RR$ defined on the set of
of facets of the triangulation. As in the case of the group, such a
function $\zeta$ can be thought of as collection
$(\zeta_\sigma)_{\sigma\in\scrK_4}$ of constant real valued functions
$\zeta_\sigma:\sigma\to\RR$ on each facet of the triangulation.
The Lie algebra is endowed with a $L^2$ inner product, defined by
$$
\ipp{\zeta,\zeta'}= \sum_{\sigma\in\scrK_4}= \int_\sigma
\zeta_\sigma\zeta'_\sigma \vol_M
$$
and the exponential map $\exp:\LieTT(\scrT)\to\TT(\scrT)$ is defined by
$\exp(\zeta)= \lambda$, with
$\lambda(\sigma)=e^{i\zeta(\sigma)}$ for each $\sigma\in\scrK_4$.

The fact that $\vecV$ is a complex vector space induces a multiplicative
action by complex valued functions on the space of $\vecV$-valued
differential forms.
In particular, the group $\TT(\scrT)$ acts by complex multiplication on the
moduli space $\scrF(\scrT)$.
 More precisely, given $\lambda =(\lambda_\sigma)\in\TT(\scrT)$ and
$F=(F_\sigma)\in \TT(\scrT)$, we define $\lambda\cdot F\in\scrF(\scrT)$ by
$$
(\lambda \cdot F)_\sigma= \lambda_\sigma F_\sigma
$$
for every $\sigma\in\scrK_4$,
where the product in the RHS is the multiplication by the complex valued
function $\lambda_\sigma$.

\begin{prop}
	\label{prop:A1}
	The hyperKähler structure $(\scrF(\scrT),\cG,\cI,\cJ,\cK)$ is invariant
	under the action of the torus $\TT(\scrT)$.
	\end{prop}
\begin{proof}
	The proof is formally the same as in the smooth setting (cf.
	Proposition~\ref{prop:presk}).
\end{proof}

As in the smooth case, we have three involutions $\scrR_\bullet$ of
$\scrF(\scrT)$,
 given by
$$
\scrR_I F = i\cI F,\quad 
\scrR_J F = i\cJ F,\quad 
\scrR_K F = i\cK F
$$
with three maps
$\mu_\bullet:\scrF(\scrT)\to \LieTT(\scrT)$
defined by
$$
\mu_\bullet(F)(\sigma) = - \frac 12\ip{\scrR_\bullet F_\sigma, F_\sigma}
$$
Following the analogy (cf. \S\ref{sec:ham}), the action of $\TT(\scrT)$  on $\scrF(\scrT)$ is  Hamiltonian
with respect to each Kähler form $\Omega_\bullet$:
\begin{theo}
	\label{theo:A2}
	The group  action of $\TT(\scrT)$ on $\scrF(\scrT)$ is Hamiltonian with respect to
	each Kähler forms $\Omega_\bullet$. The moment maps are
	given by $\mu_\bullet$ for $\bullet=I,J,K$. More precisely,
	$\mu_\bullet$ is
	$\TT(\scrT)$-invariant and for every $\zeta \in\LieTT(\scrT)$
	$$
	D\ipp{\mu_\bullet,\zeta} = -\iota_{X_\zeta}\Omega_\bullet
	$$
	where $X_\zeta$ is the vector field on $\scrF(\scrT)$ associated to
	the infinitesimal action of $\zeta$. In this sense, $\mu_\bullet$
	is a moment map for the group action with respect to the Kähler form
	$\Omega_\bullet$.
\end{theo}
\begin{proof}
	The proof of Theorem~\ref{theo:ham}, in the smooth setting, can be
	repeated
	formally in the polyhedral context.
\end{proof}

\subsection{Symplectic density in the polyhedral case}
Formula~\ref{eq:density} was used in the smooth setting to express the
moment map in terms of symplectic density. However the
formula is a consequence of algebraic pointwise computation. It follows
that a similar formula holds  in
the polyhedral setting:
for $F=(F_\sigma)\in\scrF(\scrT)$, we deduce that $\mu_\bullet
(F)\in\LieTT(\scrT)$ is given by the family
$$
\mu_\bullet(F)(\sigma)= - \frac{F_\sigma^*\omega_V\wedge
\hat\omega_\bullet}{\vol_M}
$$
The three moment maps can be gathered in a hyperKähler moment map
$$
\mub=(\mu_I,\mu_J,\mu_K)
$$
and using the isomorphism $\xi$ (cf. Formula~\ref{eq:xi}), we have
$$
\xi\circ\mub(F)(\sigma)= \sqrt
2(F_\sigma^*\omega_V)^-.
$$
Hence the moment can be interpreted as a map 
$\tilde\mub$ on $\scrF$,
with values in the space of families of constant anti-selfdual forms on
each facet of the triangulation and given by
$\tilde\mub=\xi\circ\mub$.

\subsection{Polyhedral Hodge theory}
Stokes theorem extends to Whitney form:  in the case of our
torus $M$ endowed with a Euclidean triangulation $\scrT$, let $\beta
=(\beta_\sigma)\in\Omega^3_w(M,\scrT)$ be a   Whitney form. For every
$4$-simplex $\sigma\in\scrK_4$, we have
$$
\int_{\del\sigma} j^*\beta_\sigma =\int_\sigma d\beta_\sigma
$$
where $j:\del\sigma\to\sigma$ is the canonical inclusion,
by the usual Stokes theorem on a simplex. By the Whitney condition, the boundary
contributions cancel out and we have
$$
\int_M d\beta:=\sum_{\sigma\in\scrK_4}\int_\sigma d\beta_\sigma =0.
$$
We can define the exterior product for families $\beta=(\beta_\sigma)$ and
$\gamma=(\gamma_\sigma)$ by
$\beta\wedge\gamma=(\beta_\sigma\wedge\gamma_\sigma)$. By the usual
Leibnitz formula
$$
d(\beta \wedge\gamma)=d\beta\wedge\gamma +(-1)^p\beta\wedge d\gamma
$$
if $\beta$ is a $p$-form.
If $\beta$ is a Whitney $p$-form  and $\gamma$ is a Whitney $q$-form with
$p+q=3$, we deduce from the Stokes theorem
$$
(-1)^p\int_M d\beta\wedge\gamma +\int_M \beta\wedge d\gamma=0.
$$

We define a Hodge $\star$-operator on families on $\Omega^p(M,\scrT)$
by $\star\beta = (\star \beta_\sigma)$, where $\star$ is the usual
Hodge operator.
If $\beta\in \Omega^2(M,\scrT)$ satisfies $\star\beta=\beta$, we say that $\beta$ is selfdual $2$-form,
as in the smooth setting.
Then
\begin{align*}
\int_M \beta\wedge
	\beta &=
	\sum_{\sigma\in\scrK_4}\int_\sigma\beta_\sigma\wedge\beta_\sigma\\
	&=
\sum_{\sigma\in\scrK_4}\int \ip{\beta_\sigma,\star\beta_\sigma}\vol_M\\
	&=
\sum_{\sigma\in\scrK_4}\int \ip{\beta_\sigma,\beta_\sigma}\vol_M 
	=
 \|\beta\|_{L^2} ^2
\end{align*}
so that $\int_M \beta\wedge \beta >0$ or $\beta =0$.
We deduce the following result, analogue to the smooth case:
\begin{lemma}
	\label{lemma:sdpol}
The only selfdual exact Whitney $2$-form is the zero form.
\end{lemma}
\begin{proof}
Is $\beta$ is an exact Whitney $2$-form, there exists a Whitney $1$-form
	$\gamma$ such that $\beta = d\gamma$. By Stokes theorem, we deduce
	that $\int_M\beta\wedge\beta = \int _M d(\gamma\wedge \beta)=0$,
	hence $\beta=0$.
\end{proof}

\begin{cor}
	\label{cor:B}
	Let $f\in\scrM(\scrT)$ be a polyhedral map of the torus such that 
	$f^*[\omega_M]=[\omega_M]$. Then the following conditions are
	equivalent
	\begin{enumerate}
		\item $\mub\circ\scrD f = 0$.
		\item $f$ is symplectic, in the polyhedral sense.
	\end{enumerate}
\end{cor}
\begin{proof}
	The proof is similar to the one in the smooth setting (cf.
	Proposition~\ref{prop:symplecto}). Assuming that
	$\mub\circ\scrD f=0$, we have the equivalent equation
	$\tilde\mub(\scrD f)=0$ which means that the familly 
	$f^*\omega_M|_\sigma$ is selfdual on every $4$-simplex
	$\sigma\in\scrK_4$.
	Hence $f^*\omega_M$ is a selfdual Whitney $2$-form. By assumption
	$f^*\omega_M-\omega_M$ is an exact Whitney $2$-form and by Lemma~\ref{lemma:sdpol}, we
	conclude that $f^*\omega_M= \omega_M$.

	Conversely, if $f$ is symplectic in the polyhedral sense, then $f^*\omega_M=\omega_M$
	along every $4$-simplex, which shows that $f^*\omega_M$ is
	selfdual, hence $\tilde\mub(\scrD f)=0$.
\end{proof}

We conclude this section with a proof of one of our main theorems:
\begin{proof}[Proof of Theorem~\ref{theo:AP}]
	The analogue of Theorem~\ref{theo:A} is given by
	Theorem~\ref{theo:A2} for the polyhedral case.
	The analogue of Theorem~\ref{theo:eqmu} is given by
	Corollary~\ref{cor:B}.
\end{proof}

\begin{rmk}
	\label{rmk:dl}
Let  $\scrT_i=(M,\scrK_i,\phi_i)$ be two Euclidean triangulations of the
	torus for
	$i=1,2$, such that $\scrT_2$ is a refinment of $\scrT_1$.
	By definition of polyhedral  maps, there is a canonical injection
	$\scrM(\scrT_1)\hookrightarrow\scrM(\scrT_2)$. Hence the family of
	moduli spaces $\scrM(\scrT)$ parametrized by triangulations and ordered
	by refinement is an \emph{inductive family}. The direct limit 
	$$\scrM^{PL}=\lim_{\stackrel{\to}{\scrT}} \scrM(\scrT)$$
	is a topological space endowed with the final topology.
	As a set
	$\scrM^{PL}$ is the space of piecewise linear maps of the torus, by
	definition. 
	We can consider the subspace  
	of piecewise linear symplectic maps, obtained as the direct limit
	of the spaces of polyhedral symplectic maps
	$$\Symp^{PL}(M,\omega_M)=\lim_{\stackrel{\to}{\scrT}}
	\Symp(M,\omega_M,\scrT).$$
The moduli
	spaces $\scrF(\scrT)$ and the groups $\TT(\scrT)$ also have direct
	limits $\scrF^{PL}$ and $\TT^{PL}$.
	Formally, we have an interesting framework where 
	our moment map constructions extend to the
	piecewise linear  setting without any reference to the choice of a
	particular triangulation.
\end{rmk}

\subsection{Polyhedral modified moment map flow}
Given $\alpha\in H^1(M,\vecV)\setminus 0$, we defined at \S\ref{sec:diffpol} the subspace
$\scrF_\alpha(\scrT)$ as the subspace of Whitney forms of $\scrF(\scrT)$
with cohomology class in $\RR\alpha$.
The Euclidean metric $\cG$ induces an orthogonal projector, which is the analogue of
the Hodge projector in the smooth setting, denoted
$$
\Pi_\alpha :\scrF(\scrT)\to\scrF_\alpha(\scrT).
$$
We consider the energy of the moment map also have a polyhedral analogue
defined by  the functional 
$$
\phi:\scrF(\scrT)\to\RR,
$$
given by
$$
\phi(F)=  \frac 12
\|\mub(F)\|^2_{L^2}.
$$
The gradient of $\phi$ on $\scrF(\scrT)$ is denoted $\nabla\phi$, whereas the
gradient of the restrition $\phi:\scrF_\alpha(\scrT)\to\RR$, denoted
$\nabla^\alpha\phi$, is related to $\nabla\phi$ via the orthogonal
projection
$$
\nabla^\alpha\phi(F)=\Pi_\alpha\nabla\phi(F)
$$
for every $F\in\scrF_\alpha(\scrT)$.
We condider the downward gradient flow of $\phi$ along
$\scrF_\alpha(\scrT)$:
\begin{equation}
	\label{eq:polmmf}
\frac{\del F}{\del t}=-\Pi_\alpha\nabla\phi(F).
\end{equation}
The evolution equation~\eqref{eq:polmmf} is called the \emph{polyhedral
modified moment map flow}. This ordinary
differential equation is an analogue of the modified moment map flow
defined in the smooth setting. However, the Cauchy-Lipschitz theorem for
ODE insures
the local existence and uniqueness of solutions of
equation~\eqref{eq:polmmf}.

All the identities computed in the smooth setting have formal analogues in
the polyhedral case. The most useful properties are summarized in
Proposition~\ref{prop:formpol}. Before stating the resutls, we introduce 
the vector fields $W_\bullet$, defined on $\scrF(\scrT)$ by the formula
$$
	W_\bullet(F)=-\mu_\bullet(F)\scrR_\bullet F
$$
for $\bullet=I,J,K$. The proof 
of Lemma~\ref{lemma:formphi} is immediately adapted to the polyhedral
setting and gives the following result:
\begin{prop}
	\label{prop:formpol}
	The gradient of $\phi$ on $\scrF(\scrT)$ is given by the
	formula
	$$
	\nabla\phi(F)=\sum_\bullet W_\bullet(F).
	$$
	In addition
	$$
	\ipp{\nabla\phi(F),F}=4\phi(F).
	$$

	For every solution $F_t$ of the modified moment map flow, for $t$
	in some interval, we have
	$$
	\frac{d}{dt}\|F_t\|^2_{L^2}=-8\phi(F_t).
	$$
	In particular, the $L^2$-norm is non increasing along the flow.
\end{prop}
\begin{cor}
	\label{cor:limit}
	The critical points of $\phi:\scrF_\alpha(\scrT)\to \RR$ are the
	fixed points of the polyhedral modified moment map flow and
	agree with the
	vanishing locus of $\phi$ in $\scrF_\alpha(\scrT)$.

	Furthermore, a flow line $F_t$ defined for
	$t$ in some interval $[t_0,t_1)$ admits a unique extension 
	for $t\in [t_0,+\infty)$, as a
	solution of the modified moment map flow.
	Finally, the limiting orbits of the flow are non empty compact
	sets.
\end{cor}
\begin{proof}
	If $F\in\scrF_\alpha$, then
	$$\ipp{\nabla\phi(F),F}
	=\ipp{\Pi_\alpha\nabla\phi(F),F}=\ipp{\nabla^\alpha\phi(F),F}.$$
	By Proposition~\ref{prop:formpol}, we deduce that 
	$\ipp{\nabla^\alpha\phi(F),F}=4\phi(F)$. This shows that a critical
	point of the restriction $\phi:\scrF_\alpha\to \RR$ is a zero of
	$\phi$. Conversely  $\phi(F)=0$ implies $\mu_\bullet(F)=0$ hence $W_\bullet(F)=0$, by
	definition.
	Proposition~\ref{prop:formpol} then shows that $F$ is a critical
	point of $\phi$.

	The monotonicity of the $L^2$ norm along flow lines stated in
	Proposition~\ref{prop:formpol} insures that the flow cannot blowup:
 if $F_t$ is
	a solution of the flow defined for $t\in [t_0,t_1)$, then
	$\|F_t\|_{L^2}\leq \|F_{t_0}\|_{L^2}$. 
	The existence of the flow for $t\in[t_0,+\infty)$  follows from a
	classical result of ODE theory and the flow line is bounded by
	$\|F_{t_0}\|_{L^2}$. The limiting orbit of $F_t$ is closed, by definition,
	and the compactness property follows from the boundedness. 
	The non emptyness of the limiting orbit of $F_t$ is clear, as we
	can always extract a converging subsequence of $F_t$, by the
	Bolzano-Weirstrass theorem.
\end{proof}

\subsection{A Duistermaat theorem}
In this section, we will often omit the reference to the triangulation
$\scrT$ and denote $\scrF(\scrT)$ by $\scrF$, for simplicity. The moment
map and the modified moment map flow are always in reference to the
polyhedral framework, for some fixed triangulation $\scrT$.
By continuity of $\phi$, the  sets
\begin{equation}
	\label{eq:defve}
V_{\epsilon} = \{F\in\scrF_\alpha,\; \phi(F)<\epsilon\}
\end{equation}
are open neighborhood of the vanishing locus of $\phi$ in $\scrF_\alpha$, for every $\epsilon>0$.

The function $\phi$ is non
increasing along the modified moment map flow, by definition of a downward
gradient flow. 
This implies that the set $V_\epsilon$ is invariant under the flow:
\begin{lemma}
	\label{lemma:trap0}
Let $F_t$ be a solution of the modified moment map flow, defined for $t$ in
	some interval $I$. If $F_{t}\in V_\epsilon$ for $t\in I$, then
	$F_{t'}\in V_\epsilon$ for every $t'\in I$ with $t'\geq t$.  
\end{lemma}

We consider the closed ball or radius $R>0$
centered at the origin in $\scrF_\alpha$
$$
\bar B_R = \{ F\in \scrF_\alpha,\; \|F\|\leq R\}
$$
and  $C_\epsilon$ be the compact subset of $\bar B_R$ defined by
$$
C_\epsilon = \bar B_R\setminus V_\epsilon.
$$
We introduce the
$$
\delta = \inf_{F\in C_\epsilon} N(F)>0
$$
where $N:\scrF_\alpha\to \RR$ is the continuous function defined by
\begin{equation}
	\label{eq:defN}
N(F)=\|\nabla^\alpha\phi(F)\|^2.
\end{equation}
By continuity of $N$ and compactness of $C_\epsilon$, 
the infimum $\delta$ is a minimum, with $\delta=N(F_{min})$ for some
$F_{min}\in
C_\epsilon$.
By Corollary~\ref{cor:limit}, $\phi$  and $N$ have the same vanishing locus
on $\scrF_\alpha$. Hence $N$ does not vanish on $C_\epsilon$ and it follows
that $\delta=N(F_{min})>0$.
Notice that $\delta=\delta(\epsilon, R)$ depends on the choice of
$\epsilon$ and $R$.

\begin{lemma}
	\label{lemma:trap}
	Let $F_t$ be a solution of the modified moment map flow, defined
	for $t\in[0,+\infty)$. If $F_0\in \bar B_R$, then $F_t\in \bar B_R\cap V_\epsilon$ 
	 for every $t\geq
	\frac{\phi(F_0)}{\delta}$.
\end{lemma}
\begin{proof}
We consider a solution $F_t$ of the modified moment map flow, with the
	assumptions of the lemma.
	 By Proposition~\ref{prop:formpol}
	the $L^2$-norm is non
	increasing along the flow, hence $F_t\in \bar B_R$ for every $t\geq
	0$.
	By Lemma~\ref{lemma:trap0}, $V_{\epsilon}$ is stable under the
	flow. In conclusion 
	$$\tilde
	V_{\epsilon}=\bar B_R\cap V_\epsilon$$
	is also stable under the flow. Notice that  we have a
	partition of the ball $\bar B_R=\tilde V_\epsilon\cup C_\epsilon$.
	As a solution of a downward gradient flow, the flow line satisfies
	the ODE
$$
	\frac {d\phi(F_t)}{dt}=-
	\|\nabla^\alpha\phi(F_t)\|_{L^2}^2=-N(F_t).
$$
	In particular, if $F_t \in C_\epsilon$, we have
	\begin{equation}
		\label{eq:diffineq}
	\frac {d\phi(F_t)}{dt} \leq -\delta.
	\end{equation}
	If $F_t\in C_\epsilon$ for every $t\in[0,T]$, we
	obtain by integration of the differential
	inequality~\eqref{eq:diffineq}
	\begin{equation}
		\label{eq:diffineqc}
	\phi(F_T) \leq \phi(F_0) -T\delta.
	\end{equation}
	Using the fact that $\phi(F_T)\geq 0$, we deduce that $T\leq
	T_0=\frac{\phi(F_0)}{\delta}$.
	Now there are two possibilities: 
	\begin{itemize}
		\item $F_t\in \tilde V_{\epsilon}$ for
	some time $t_0\leq T_0$.
 We argued above that 
			$\tilde V_{\epsilon}$ is stable under the flow.
			 Hence $F_t \in \tilde V_{\epsilon}$ for every $t\geq t_0$, in
	particular, for every $t\geq T_0$.
\item	
	If, on the contrary, $F_t \in C_\epsilon$ for every $t\in
			[0,T_0]$, we deduce that $\phi(F_{T_0})\leq 0$ from
			Inequality~\eqref{eq:diffineqc}. Since $\phi$ is
			non negative, we
	have $\phi(F_{T_0})=0$. This is a contradiction since $F_{T_0}\in
			C_\epsilon$, hence $\phi(F_{T_0})>0$, by assumption.
	\end{itemize}
	We conclude that $F_t$ hits
	$\tilde V_{\epsilon}$ for some time $t_0\leq T_0$. Since $\tilde
	V_\epsilon$ is stable under the flow, we have
	$F_t\in \tilde V_{\epsilon}$ for every $t\geq t_0$,
	in particular, for every $t\geq T_0$, which proves the lemma.
	\end{proof}

\begin{cor}
	\label{cor:limphi}
For every solution of the modified moment map flow $F_t$, defined for
	$t\in [0,+\infty)$, we have 
	$$\lim_{t\to+\infty} \phi(F_t)=0.
	$$
	In particular every limiting orbit of the modified moment map flow
	is contained in the zero locus of $\phi$.
\end{cor}
\begin{proof}
We choose $R=\|F_0\|$ in the above constructions. Then for every
	$\epsilon>0$, there exists $\delta$ such that for every $t\geq
	\frac {\phi(F_0)}\delta$, we have $\phi(F_t)<\epsilon$, by
	Lemma~\ref{lemma:trap}. This shows that the limit of $\phi(F_t)$ is
	zero. The statement about limiting orbits is immediate, by
	continuity of $\phi$.
\end{proof}

Eventually, we prove our main result for the modified moment map flow:
\begin{theo}
	\label{theo:retract}
	Every solution $F_t\in\scrF_\alpha(\scrT)$ of the modified moment map flow, defined for
	$t$ in some interval $[0,+\infty)$, admits a limit
	$$
	F_\infty=\lim_{t\to+\infty} F_t\quad \in \scrF_\alpha(\scrT)
	$$
	such that~$\phi(F_\infty)=0$.
In particular, we have an  extended flow 
	$$
	\Theta:[0,+\infty]\times \scrF_\alpha(\scrT) \to
	\scrF_\alpha(\scrT)
	$$
	 defined as follows: for $F\in\scrF_\alpha(\scrT)$ and
	$t\in [0,+\infty)$,
	we put
	\begin{itemize}
		\item
	$\Theta(t,F)=F_t$, 
	where $F_t$ is the solution of the modified moment map flow with initial
	condition $F_0=F$.
\item If $t=+\infty$, 
	we put	$\displaystyle \Theta(+\infty,F)=F_\infty=\lim_{t\to+\infty} F_t$.
	\end{itemize}
	The extended flow $\Theta$ is a continuous map and defines a continuous retraction of
	$\scrF_\alpha(\scrT)$
	onto the zero locus of $\phi$ in $\scrF_\alpha(\scrT)$.
\end{theo}
The proof of Theorem~\ref{theo:retract} rests on the \L{}ojasiewicz estimate (cf. \cite[Lemma 2.2]{L05} or
Proposition 1 p. 67 of \cite{Lo}, or Proposition 6.8 in \cite{BM}):
\begin{lemma}[\L{}ojasiewicz gradient inequality]
	\label{lemma:loj}
	Let $\phi$ be a real analytic function on an open set $W\subset \RR^k$ and
	$x\in W$,
a critival point of $\phi$, such that $\phi(x)=0$. Then, there exists a neighborhood $U$ of
$x$ and 
some constants $c>0$ and $\nu\in (0,1)$, such that
$$
\|\nabla \phi(y)\|\geq c|\phi(y)|^\nu
$$
for every $y\in W$,
where $\|\cdot \|$ is an Euclidean norm on $\RR^k$ and $\nabla$
is the gradient with respect to the Euclidean inner product.
\end{lemma}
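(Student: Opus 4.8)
The plan is to prove the classical Lojasiewicz gradient inequality (Lemma~\ref{lemma:loj}) by collapsing the multivariate estimate to a one--variable asymptotic analysis of a minimum function, and then feeding in the Puiseux structure of one--variable subanalytic functions together with the curve selection lemma. First I would reduce to bounding $\|\nabla\phi\|$ separately on the two open sets $\{\phi>0\}$ and $\{\phi<0\}$: on $\{\phi=0\}$ the right-hand side vanishes and the inequality is trivial, while replacing $\phi$ by $-\phi$ shows it suffices to treat the side $\{\phi>0\}$ (and the two resulting exponents and constants are then combined by taking the larger $\theta<1$ and the smaller $c$). Fix a closed ball $\overline B=\overline B(x,r)\subset W$ and, for small $t>0$, set
$$
m(t)=\min\{\,\|\nabla\phi(y)\|^{2}\ :\ y\in\overline B,\ \phi(y)=t\,\},
$$
the minimum being attained because $\{\phi=t\}\cap\overline B$ is compact and nonempty for small $t>0$.

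The structural input is that $m$ is a subanalytic function of the single variable $t$. Indeed the set $\{(y,t)\in\overline B\times(0,\delta):\phi(y)=t,\ \|\nabla\phi(y)\|^{2}=m(t)\}$ is semi-analytic and relatively compact, so by the projection theorem for subanalytic sets its image under $(y,t)\mapsto(t,m(t))$, i.e.\ the graph of $m$, is subanalytic. A one--variable subanalytic function has a Puiseux asymptotic at $0$: either $m\equiv 0$ near $0$, or $m(t)=A\,t^{2\theta}(1+o(1))$ with $A>0$ and $2\theta\in\QQ_{\geq 0}$. I would discard the first alternative by Sard's theorem in the subanalytic category: the set of critical values of $\phi|_{\overline B}$ is a compact subanalytic subset of $\RR$ of dimension zero, hence finite, and so cannot fill an interval $(0,\delta')$. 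This already yields $\|\nabla\phi(y)\|\geq c\,|\phi(y)|^{\theta}$ near $x$ for some $c>0$ and $\theta\geq 0$; moreover the borderline case $\theta=0$ (where $m$ is bounded below) gives $\|\nabla\phi\|\geq c_{0}>0$, which trivially dominates $c|\phi|^{\beta}$ for any $\beta\in(0,1)$ once $|\phi|$ is small. So the whole problem is pushed onto the case $\theta>0$ and the single assertion $\theta<1$.

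The delicate point, and what I expect to be the main obstacle, is exactly this strict inequality $\theta<1$. When $\theta>0$ we have $m(t)\to 0$, so along a sequence of minimizers $\nabla\phi\to 0$ and the minimizers accumulate at a critical zero $y_{0}$ of $\phi$. Applying the curve selection lemma to the semi-analytic minimizer set produces an analytic arc $u\mapsto y(u)$ with $y(0)=y_{0}$ along which $\phi(y(u))=a\,u^{n}(1+o(1))$ (with $a>0$, $n\in\ZZ_{\geq1}$) and $\|\nabla\phi(y(u))\|^{2}=m(\phi(y(u)))\sim A\,a^{2\theta}u^{2n\theta}$, while $y(u)=y_{0}+c_{k}u^{k}+\cdots$ gives $\|y'(u)\|\sim |k|\,\|c_{k}\|\,u^{k-1}$ with $k\geq1$. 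Differentiating and using Cauchy--Schwarz,
$$
a\,n\,u^{n-1}(1+o(1))=\frac{d}{du}\phi(y(u))=\langle \nabla\phi(y(u)),\,y'(u)\rangle\leq \|\nabla\phi(y(u))\|\,\|y'(u)\|,
$$
so comparing leading exponents forces $n-1\geq n\theta+(k-1)$, that is $\theta\leq 1-k/n<1$. Combined with the pointwise lower bound of the previous paragraph and the reduction to the two sides, this gives the stated inequality $\|\nabla\phi(y)\|\geq c\,|\phi(y)|^{\beta}$ with $\beta=\theta\in(0,1)$ on a neighborhood $U$ of $x$.

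Since this whole argument rests on three standard but non-elementary tools of real--analytic geometry---the curve selection lemma, the subanalyticity of images of relatively compact semi-analytic sets, and the Puiseux expansion of one--variable subanalytic functions---the honest presentation is to quote these results (as in \cite{Lo,BM,L05}) and to carry out explicitly only the reduction to $m$, the Sard-type exclusion of the degenerate cases, and the Cauchy--Schwarz exponent computation that delivers $\theta<1$. The only genuinely quantitative step specific to this statement is that final exponent bound; everything else is bookkeeping around the cited machinery.
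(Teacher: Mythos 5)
Your proposal is correct in outline, but it is worth noting that the paper does not prove this lemma at all: it is quoted as a black box, with pointers to \cite[Lemma 2.2]{L05}, \cite[Prop.~1, p.~67]{Lo} and \cite[Prop.~6.8]{BM}, and only its consequence (convergence of the modified moment map flow via the length estimate) is worked out in the text. What you have written is essentially a reconstruction of the standard literature proof in the Bierstone--Milman style: reduce to the one-variable minimum function $m(t)=\min\{\|\nabla\phi\|^2:\phi=t\}$ on a compact ball, invoke subanalyticity of its graph plus the one-variable Puiseux expansion to get $m(t)\sim A\,t^{2\theta}$, kill the degenerate alternatives by finiteness of critical values (subanalytic Sard), and extract $\theta<1$ from a curve-selection arc together with the Cauchy--Schwarz exponent comparison $n-1\geq n\theta+(k-1)$, which indeed yields $\theta\leq 1-k/n<1$ since $k\geq 1$. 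That final computation is right, and you correctly identify it as the only step not absorbed by the cited machinery. Three small imprecisions to fix: (i) the minimizer set $\{(y,t):\phi(y)=t,\ \|\nabla\phi(y)\|^2=m(t)\}$ is \emph{subanalytic}, not semianalytic --- its definition involves a universal quantifier over the level set, so you need Gabrielov's complement theorem (or closure of bounded subanalytic sets under projections and complements), not just the projection theorem; the curve selection lemma in its subanalytic form still applies, so nothing breaks, but the terminology matters since semianalytic sets are not closed under projection. (ii) The level sets $\{\phi=t\}\cap\overline B$ may be empty for all small $t>0$ (e.g.\ if $\phi\leq 0$ near $x$), in which case that side of the inequality is vacuous --- say so rather than asserting nonemptiness. (iii) The conclusion holds on the neighborhood $U$, not on all of $W$; note that the statement as printed in the paper says ``for every $y\in W$'' where it plainly means $y\in U$, a typo your proof implicitly corrects. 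With these repairs your argument is a complete and honest proof sketch, more informative than the paper's citation, at the cost of importing the full subanalytic toolkit that the paper deliberately avoids rederiving.
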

\begin{proof}[Proof of Theorem~\ref{theo:retract}]
	The function $\phi: \scrF_\alpha(\scrT) \to\RR$ is polynomial
	function of degree~$4$. In
	particular, $\phi$ is analytic and~Lemma~\ref{lemma:loj} applies.
	As before, we consider the closed ball $\bar B_R\subset
	\scrF_\alpha$ or radius $R>0$ centered at the origin. The vanishing locus
	$K=\phi^{-1}(0)\cap \bar B_0$
	is
	closed in $\bar B_0$, hence compact. For each point $F\in K$, we
	choose an open neighborhood $U_F$ of $F$
	in $\scrF_\alpha$ and constants $c_F>0$ and $\nu_F\in(0,1)$ such
	that the \L{}ojasiewicz
	estimate holds.

	The familly of  sets $U_F$ for $F\in K$ is an open cover $K$.
	 By compactness, we can extract a finite cover $U_1,\cdots,
	U_k$ and we define
	$$
	U = \bigcup U_i,\quad \nu = \max \nu_i \quad \mbox{ and }
	\quad c=\min c_i.
	$$
In particular, for every $F\in U$, we have
	\begin{equation}
		\label{eq:loja}
	\|\nabla^\alpha \phi(F)\|\geq c\phi(F)^\nu
	\end{equation}
	provided $\phi(F)\leq 1$. In other words,
	Inequality~\ref{eq:loja} holds on $U\cap V_1$, where $V_1$ is
	defined by~\eqref{eq:defve}.

	If $\bar B_R\subset  V_1\cap U$, we have proved that
	\eqref{eq:loja} holds on the ball.
Suppose this is not the case, in other words that $\bar
	B_R \setminus (U\cap V_1)$ is not empty. Then, the function $\phi$ admits a minimum $\epsilon_0>0$ 
	on the compact set  $\bar B_R\setminus (U\cap V_1)$. In particular
	$V_{\epsilon_0}$ does not intersect this compact set, hence
$$
	V_{\epsilon_0}\cap \bar B_R \subset U\cap V_1\cap  \bar B_R.
$$
	In conclusion,  an open
	neighborhood 
	$$\tilde V_{\epsilon_0}=V_{\epsilon_0}\cap \bar B_0
	$$
	of $K$, with respect to the induced topology on the ball, 
	such that the \L{}ojasiewicz inequality \eqref{eq:loja} holds for
	every $F\in\tilde V_{\epsilon_0}$.

	Notice that $\tilde V_{\epsilon_0}$ cannot cover the ball $\bar
	B_R$. Indeed, $\tilde V_{\epsilon_0}$ is a subset of $U\cap V_1$
	which does not cover the ball.
	Hence $C_{\epsilon_0}=\bar B_R\setminus
	V_{\epsilon_0}$ is a non
	empty compact subset of $\bar B_R$.
	We consider the infimum $\delta_0$ of the function $N$ (cf.
	Formula~\eqref{eq:defN}) on $C_{\epsilon_0}$ as before and we have
	$\delta_0>0$.

For $F\in\scrF_\alpha$ and we define $R=\|F\|+1$. In particular $\bar B_R$
is a neighbordhood of $F$ in $\scrF_\alpha$.
For every  $G\in \bar B_R$, we denote by $G_t$ the
corresponding flow line, defined for $t\in[0,+\infty)$, 
with $G_0=G$. By Lemma~\ref{lemma:trap} 
$$G_t\in \tilde V_{\epsilon_0} \quad \forall t\geq \frac
M{\delta_0},
$$
where $M$ is the maximum of $\phi$ on the ball $\bar B_R$.
From the flow  equation, we deduce
$$
	\frac{d}{d t}\phi(G_t)= -\|\nabla^\alpha\phi(G_t)\|^2,
	$$
	hence
$$
	-\frac d{d t} \phi(G_t)^{1-\nu} = (1-\nu)
	\phi(G_t)^{-\nu} \| \nabla^\alpha \phi(G_t) \|^2.
$$
For every $t\geq\frac M{\delta_0}$, we have  by 
	\eqref{eq:loja}
$$
	-\frac d{d t} \phi(G_t)^{1-\nu} \geq
	c(1-\nu)\|\nabla^\alpha \phi(G_t)\|.
$$
Hence, for every $t_1>t_0\geq \frac M{\delta_0}$, we obtain by integrating the above
inequality
\begin{align}
	\nonumber
	\phi(G_{t_0})^{1-\nu}
	- \phi(G_{t_1})^{1-\nu} &= -\int _{t_0}^{t_1} \frac
	d{d t} \phi(G_t)^{1-\nu} dt \\
	\label{eq:lojint1}
	\phi(G_{t_0})^{1-\nu}
	- \phi(G_{t_1})^{1-\nu} &\geq c(1-\nu)  \int_{t_0}
	^{t_1} \|\nabla\phi(G_t)\| dt.
\end{align}
	By the Jensen inequality, we have
	\begin{align}
			\nonumber  d(G_{t_0}, G_{t_1}) &\leq \int_{t_0}
			^{t_1}\left \| \frac d{d
	t} G_t \right \| dt \\
		\label{eq:lojint2}
		&\leq
\int_{t_0} ^{t_1}\| \nabla^\alpha\phi( G_t)\| dt 
   \end{align}
   where $d(F,G)=\|F-G\|$ is the distance induced by the Euclidean norm 
   on~$\scrF_\alpha$. By Inequalities~\eqref{eq:lojint1}
   and~\eqref{eq:lojint2}, we have
\begin{equation}
		\label{eq:loj2} d(G_{t_0}, G_{t_1}) 
		\leq c_2 \left (\phi(G_{t_0})^{1-\nu} -
		\phi(G_{t_1})^{1-\nu} \right),
	\end{equation}
	where $c_2= \frac 1{c(1-\nu)}$.

By Corollary~\ref{cor:limphi}, $\displaystyle 
\lim_{t\to+\infty}\phi(G_t)=0$, hence $\displaystyle \lim_{t\to +\infty}
\phi(G_t)^{1-\nu}=0$. This implies that the function $t\mapsto
		\phi(G_t)^{1-\nu}$ satisfies the Cauchy
		criterion. By
		Inequality~\eqref{eq:loj2}, we deduce that for every
		$\epsilon >0$, there exists $t_0\geq \frac M{\delta_0}$,
		such that for every $t_1>t_0$, we have
		$d(G_{t_0}, G_{t_1})\leq \epsilon$. We see that $t\mapsto
		G_t$ satifies the Cauchy criterions as well. 
		By completness of $\scrF_\alpha$, 
		the flow line  $G_t$ converges as $t$ goes to infinity.
		In particular $F_\infty =\lim_{t\to+\infty}\Theta(t,F)$
		exists for every $F\in\scrF_\alpha$ and we have proved that
		the extended
		flow is well defined on 
		 the entire space $[0,+\infty]\times
		\scrF_\alpha$.

By the classical ODE theory, the flow $\Theta$ is continuous on
$[0,+\infty)\times \scrF_\alpha$. We are going to show that 
 $\Theta$ is also continuous at every point of
 $\{+\infty\}\times \scrF_\alpha$,
  hence everywhere. 
First observe that passing to the limit in
		Inequality~\eqref{eq:loj2} provides a control between
		$G_t$ and its limit $G_\infty$. More precisely, let $G_t$
		be any solution of the modified moment map flow defined for
		$t\in[0,+\infty)$, with $G_0\in \bar B_R$ as before.
		Using the factt that $\displaystyle
		\lim_{t_1\to\infty}\phi(G_{t_1})=0$, we deduce that
	\begin{equation}
		\label{eq:loj3} d(G_t, G_{\infty})  \leq c_2
		\phi(G_{t})^{1-\nu} ,
\end{equation}
		  for every $t\geq\frac M{\delta_0}$. This implies 
\begin{align}
	\nonumber
	d(G_\infty,F_\infty)&\leq d(G_\infty,G_t)+d(G_t,F_t)+d(F_t,F_\infty)\\
	\label{eq:contrinterm}
	&\leq  d(G_t,F_t)+ c_2 \phi(F_t)^{1-\nu}+
	c_2\phi(G_t)^{1-\nu}.
\end{align}
for every $t\geq \frac M{\delta_0}$.

Given $\epsilon >0$, we denote by $\delta>0$ the constant provided by
Lemma \ref{lemma:trap} and we put $T=  M \max(\frac 1\delta,\frac 1{\delta_0})$.
Then for every $t\geq T$, we have $\phi(G_t)\leq \epsilon$ for every
flow line $G_t$ with $G_0=G\in\bar B_R$. By
Inequality~\ref{eq:contrinterm}, we have
$$
	d(G_\infty,F_\infty)\leq d(G_T,F_T)+2c_2\epsilon^{1-\nu}.
$$
Using the continuity of $\Theta$ at $(T,F)\in [0,+\infty)\times
\scrF_\alpha$, we deduce that there exists 
a neighbordhood $W$ of
$F$ in $\bar B_R$, such that for every $G\in W$, we have
$d(\Theta(T,F),\Theta(T,G))\leq \epsilon$.
In conclusion  
$$
d(\Theta(+\infty,G),\Theta(+\infty,F))\leq	\epsilon +2c_2
\epsilon^{1-\nu}
$$
for every $F\in W$, which shows that $G\mapsto\Theta(+\infty,G)$ is
continuous at $F$.
Furthermore, for every $t\geq T$ and $G\in W$, 
\begin{align*}
	d(G_t,F_\infty)&\leq  d(G_t,G_\infty)+d(G_\infty,F_\infty)\\
	&\leq
c_2(\phi(G_t)^{1-\nu}) + d(G_\infty,F_\infty)\\
	&\leq
\epsilon+3c_2\epsilon^{1-\nu},
\end{align*}
which shows that $\Theta$ is continuous at $(+\infty,F)$.
\end{proof}

	\subsection{Flow and regular locus}
\label{sec:regular}
The vanishing locus of restriction of the hyperKähler moment map
$\mub:\scrF_\alpha(\scrT)\to\LieTT^3(\scrT)$
agrees with the vanishing locus of $\phi$ in $\scrF_\alpha(\scrT)$.
This set is real algebraic, but little is understood about the regularity
of the vanishing locus.
More precisely, we introduce the following definition:
\begin{dfn}
	\label{dfn:regular}
	A point $F$ of the vanishing locus of $\phi:
	\scrF_\alpha(\scrT)\to\RR$ is called 
	regular, if the differential of the hyperKähler moment map
	$\mub:\scrF_\alpha(\scrT)\to\LieTT(\scrT)^3$ has constant 
	rank in neighborhood of $F$.
\end{dfn}

\begin{rmks}
	\begin{enumerate}
		\item 
			The vanishing set of $\phi:\scrF_\alpha \to
			\LieTT(\scrT)^3$ 
has the structure of a smooth manifold in a neighborhood 
			of a regular point.  
		\item
	We do not have a good understanding of the regularity consition.
	 In fact
	it is not clear whether there exists any regular point. It seems
			sensible to expect that
	regular points should be \emph{generic}, perharps up to passing to
			a suitable
	refinement of the triangulation $\scrT$. 
	\end{enumerate}
\end{rmks}
If the regularity condition is met at some point $F$, the polyhedral
version of the modified moment map flow is  well behaved in a
neighborhood of $F$, in the sense that the flow has  exponential convergence. 
This property is a strong incentive to explore numerical versions
of the flow,  that should provide effective examples of polyhedral
symplectic maps of the torus $M$ \cite{JR2}.
\begin{theo}
	\label{theo:stable}
	Let $F\in\scrF_\alpha$ be a regular point of the vanishing
	locus of $\phi$. Then, there exists an open neighborhood $V$ of $F$ in
	$\scrF_\alpha(\scrT)$, such that
	\begin{enumerate}
	\item The vanishing locus of $\phi$ is $V$ is a manifold.
	\item The set $V$ in invariant under the flow $G\mapsto
		\Theta(t,G)$ for $t\geq 0$.
	\item For every $G\in V$, te flow line $t\mapsto \Theta(t,G)$
		converges exponentially fast as $t\to+\infty$
		toward a zero of $\phi$ in $V$.
	\end{enumerate}
\end{theo}
\begin{proof}
	We choose first an open neighborhood $V$ of $F$ in
	$\scrF_\alpha(\scrT)$, such that the rank of
	the differential of $\mub$ is constant on $V$.
	Then the vanishing locus of $\phi$ in $V$ is a manifold. 
	By Proposition~\ref{prop:formpol},  
	$\nabla\phi(F)=\sum W_\bullet(F)$.
	Following the formal computations, as in the smooth case, 
	we deduce the analogue of Formula~\eqref{eq:hess2}
	for the Hessian of $\phi$ at $G\in V$ with $\phi(G)=0$,  given by
$$
 D^2\phi|_G(\dot F,\dot F)=\|D\mub|_G\cdot\dot F\|^2_{L^2}
$$
Therefore, the kernel of the Hessian of $\phi$ at $G$ is the tangent space
 to the vanishing locus of $\phi$ in $\scrF_\alpha(\scrT)$ whereas the
	Hessian  is
	positive in transverse directions. 
	Therefore, the restriction $\phi:V\to \RR$ is a Morse-Bott
	function and its critical set agrees with its vanishing set in $V$.
	Furthermore, the vanishing set is stable. The theorem then follows
	from classical result of ODE in the context of Morse-Bott theory.
\end{proof}

\begin{proof}[Proof of Theorem~\ref{theo:duistermaat}]
	The first part of the theorem is given by
	Theorem~\ref{theo:retract} and the statement about exponential
	convergence is given by Theorem~\ref{theo:stable}.
\end{proof}
\subsection{Polyhedral renormalized flow}
In this section $\alpha$ is a symplectic cohomology class in
$H^1(M,\vecV)$. The image of $\Symp_\alpha(M,\scrT,\omega_M)$ by $\scrD$ is
denoted $S_\alpha(\scrT) \subset \scrF_\alpha(\scrT)$.
Recall that $\scrD$ induces a homeomorphism between $\scrM_\alpha/\vec V$
and its image in $\scrF_\alpha(\scrT)$. 
It follows that the topology of $\Symp_\alpha(M,\omega_M,\scrT)$ and
$S_\alpha(\scrT)$ are closely related. Furthermore, we have the following
lemma, relating the topology of $S_\alpha(\scrT)$ with the vanishing set of
$\phi$:
\begin{lemma}
	\label{lemma:cone}
	The real cone 
	$$
	\RR^*\cdot
	S_\alpha(\scrT)\subset\scrF_\alpha(\scrT)\setminus 0,
	$$ spanned by
	$S_\alpha(\scrT)$, 
	agrees with the vanishing
locus of $\phi$ in $\scrF_\alpha(\scrT)\setminus 0$.
\end{lemma}
\begin{proof}
	If $f \in \Symp_\alpha(M,\omega_M,\scrT)$, then  $\mub(\scrD f)=0$
	hence $\phi(\scrD f)
	=0$. Notice that $[\scrD f]=\alpha$, hence $\scrD f\neq 0$.

	Conversely, if $F\in\scrF_\alpha\setminus 0$ and $\phi( F)=0$, then
	$[F]\neq 0$. This fact is the polyhedral analogue of
	Lemma~\ref{lemma:nonvanish}.
	Indeed, $\phi(F)=0$ is equivalent to the fact that
	$F^*\omega_V|_\sigma$ is selfdual for every facet $\sigma$ of the
	triangulation. 
	If $[F]=0$, there exists a map
	$f\in\scrM_\alpha(\scrT)$ homotopic to the identity such that $F=\scrD f$.
	In particular, $F^*\omega_V=f^*\omega_M$ so that $F^*\omega_V$ is
	is an exact Whitney form. By Lemma~\ref{lemma:sdpol},
	we deduce that $F$ vanishes, which is a contradiction.
	
	In conclusion $[F]\neq 0$ and we can write
	$[F]=\kappa\alpha$, for some $\kappa\in \RR^*$.
	By rescaling, we obtain $G=\kappa^{-1}F$ with $[G]=\alpha$. Since
	$\alpha$ is integral, 
	there exists $h\in\scrM_\alpha(\scrT)$ such that $G=\scrD h$. The
	condition $\phi(F)=0$ implies $\phi(G)=0$ and $h$ is a symplectic
	polyhedral map. Finally $F=\kappa \scrD h$ and $F$ belongs to the
	cone spanned by $S_\alpha$.
\end{proof}

\begin{cor}
	\label{cor:coneretract}
	The vanishing locus of $\phi$ in $\scrF_\alpha(\scrT)\setminus 0$
	admits a continuous retraction onto $S_\alpha\cup -S_\alpha$.
The latter space is homeomorphic to the vanishing locus of
	$\phi:\SS_\alpha(\scrT)\to \RR$, where $\SS_\alpha(\scrT)$ is the
	unit sphere in $\scrF_\alpha(\scrT)$.
\end{cor}
\begin{proof}
	By Lemma~\ref{lemma:cone}, the vanishing locus of $\phi$ in
	$\scrF_\alpha\setminus 0$ is the cone $\RR^*\cdot S_\alpha$. 
	There exists a continuous retraction of $\RR^*$ onto $\{-1,1\}$
	which proves the first part of the corollary. The homeomorphism of
	the last statement is given by the restriction of the radial
	projection $\scrF_\alpha(\scrT)\setminus 0\to\SS_\alpha(\scrT)$.
\end{proof}

\begin{rmk}
	\label{rmk:unfort}
 Theorem~\ref{theo:retract}, provides a continuous retraction of
	$\scrF_\alpha(\scrT)$ onto the
	vanishing locus of $\phi$. Together with
	Corollary~\ref{cor:coneretract}, these facts contribute to the
	belief that topological insights about 
 the space of symplectic
polyhedral maps $\Symp_\alpha(M,\omega_M,\scrT)$	could be derived from 
	the modified moment map flow.
Unfortunately Lemma~\ref{lemma:cone} and Corollary~\ref{cor:coneretract}
deal with the space $\scrF_\alpha\setminus 0$ whereas the full space
$\scrF_\alpha$ is involved in Theorem~\ref{theo:retract}.
\end{rmk}

By Corollary~\ref{cor:coneretract}, the vanishing locus of
$\phi:\SS_\alpha(\scrT)\to \RR$ contains 
significant topological information
about $\Symp_\alpha(\scrT)$.
We would like to treat this function as a Morse-Bott function to obtain
topological invariants for $\Symp_\alpha(\scrT)$.

\subsection{Polyhedral solitons}
As introduced at~\S\ref{sec:solitons} in the smooth setting, we can introduce the space of
\emph{polyhedral solitons}
$F\in\scrF_\alpha(\scrT)$, which are  the solitons
of the \emph{polyhedral soliton equation}
\begin{equation}
	\label{eq:solitonpol}
		\|F\|_{L^2}^2 \nabla^\alpha \phi(F)=4\phi( F) F.
\end{equation}
The non zero solitons are denoted
$\scrS_\alpha(\scrT)$ and we have a partition 
$$
\scrS_\alpha(\scrT)= \scrS_{\alpha,np}(\scrT) \sqcup \scrS_{\alpha,p}(\scrT)
$$
into proper solitons and non proper solitons. The latter coïncide with the
vanishing locus of $\phi$ in $\scrF_\alpha(\scrT)\setminus 0$.
Finally, the \emph{polyhedral renormalized flow} is  the
downward gradient flow of the restriction $\phi:\SS_\alpha\to \RR$ given
 by the formula
\begin{equation}
		\frac{\del G}{\del s}= 4\phi(G)G
		-\nabla^\alpha\phi(G)\label{eq:RenApol}.
\end{equation}
By definition, the fixed points of the renormalized flow are the solitons
contained in $\SS_\alpha (\scrT)$. 
\begin{theo}
	\label{theo:rencompl}
	The polyhedral renormalized  flow on $\SS_\alpha(\scrT)$ is complete.
	Furthermore, the
	limiting orbits of the flow  
	are contained in the space of solitons in $\SS_\alpha(\scrT)$.
\end{theo}
\begin{proof}
This follows from  classical result of ODE theory on compact manifolds, in
	the case of a gradient flow. 
\end{proof}

Conjecture~\ref{conj:intro} is motivated by the fact that the regularity of
the critical locus of $\phi$ is not well understood. 
 This is a central question if we would like to
regard $\phi$ as a Morse-Bott function. 
In contrast with the smooth setting,  we have the following existence result for proper solitons:
\begin{lemma}
	\label{lemma:existsol}
	The set of proper solitons in $\scrF_\alpha(\scrT)$ is not empty.
\end{lemma}
\begin{proof}
There exists an affine symplectic map $f$ in $\scrM_\alpha$. We can deform
	$f$ by moving on of the values $f(\sigma_0)$ where $\sigma_0$ is a
	vertex of the triangulation $\scrT$. For generic choice of
	$f(\sigma_0)$, the
	deformations $h$ is  not symplectic. It follows that $\phi(\scrD
	h)\neq 0$ and this shows that $\phi$ does not vanish indentically
	on $\SS_\alpha$. In particular, the supremum of $\phi$ is positive.
	By compactness, there exists $G\in\SS_\alpha$ such that
	$\phi(G)=\sup_{F\in\SS_\alpha} \phi(F)>0$. Hence, $G$ is a critical
	point of $\phi:\SS_\alpha\to\RR$, in other words, a soliton, and it
	is proper.
\end{proof}
\begin{proof}[Proof of Theorem~\ref{theo:polflow}]
	The result is a restatement of Theorem~\ref{theo:rencompl}
	together with Lemma~\ref{lemma:existsol}, which shows existence of
	proper solitons. The fact that every proper soliton defines a solution of the
	modified moment map flow converging toward $F=0$ is an immediate
	consequence of Lemma~\ref{lemma:dicho}, which applies formally to
	the polyhedral setting.
\end{proof}
\vspace{10pt}
\bibliographystyle{abbrv}
\bibliography{polysymp}

\end{document}